\date{}
\theoremstyle{plain}
\newtheorem{Theorem}{Theorem}[section]
\newtheorem{Corollary}[Theorem]{Corollary}
\newtheorem{Proposition}[Theorem]{Proposition}
\newtheorem{Definition}[Theorem]{Definition}
\newtheorem{Remark}[Theorem]{Remark}
\renewcommand{\d}{\operatorname{d}}
\newcommand{\sgn}{\operatorname{sgn}}
\newcommand{\diag}{\operatorname{diag}}
\newcommand{\C}{\mathbb{C}}
\newcommand{\N}{\mathbb{N}}
\begin{document}
	
		\title[Christoffel Perturbations for Mixed Multiple Orthogonality]{General Christoffel Perturbations for  \\Mixed Multiple Orthogonal Polynomials}

	\author[M Mañas]{Manuel Mañas$^{1}$}
	
	\author[M Rojas]{Miguel Rojas$^{2}$}
	\address{Departamento de Física Teórica, Universidad Complutense de Madrid, Plaza Ciencias 1, 28040-Madrid, Spain}
	\email{$^{1}$manuel.manas@ucm.es}
	\email{$^{2}$migroj01@ucm.es}

	\keywords{Mixed multiple orthogonal polynomials, Christoffel perturbations, Christoffel formulas, spectral theory of matrix polynomials}

		\begin{abstract}
Performing both right and left multiplication operations using general regular matrix polynomials, which need not be monic and may possess leading coefficients of arbitrary rank, on a rectangular matrix of measures associated with mixed multiple orthogonal polynomials, reveals corresponding Christoffel formulas. These formulas express the perturbed mixed multiple orthogonal polynomials in relation to the original ones. Utilizing the divisibility theorem for matrix polynomials, we establish a criterion for the existence of perturbed orthogonality, expressed through the non-cancellation of certain $\tau$ determinants.
	\end{abstract}
	
	\subjclass{42C05, 33C45, 33C47, 47B39, 47B36}
	\maketitle
\tableofcontents
	\section{Introduction}
	
Multiple orthogonal polynomials constitute a versatile class of polynomials with broad applications spanning diverse fields in mathematics and engineering. Unlike their orthogonal counterparts, which are associated with a single weight function, multiple orthogonal polynomials are linked to several weight functions and measures simultaneously. These polynomials serve as indispensable tools in numerical analysis, approximation theory, and mathematical physics, offering robust solutions to complex problems characterized by simultaneous orthogonality conditions.

Traditionally, multiple orthogonal polynomials have been closely connected with the theory of Hermite–Padé and its applications in constructive function theory. For insightful introductions to multiple orthogonal polynomials, one can consult Nikishin and Sorokin's book \cite{nikishin_sorokin} and the chapter by Van Assche in \cite[Ch. 23]{Ismail}. Additionally, their relation with integrable systems is elaborated upon in \cite{afm}, with a basic yet inspiring introduction provided in \cite{andrei_walter}. Studies on the asymptotic behavior of zeros can be found in \cite{Aptekarev_Kaliaguine_Lopez}, while a Gauss–Borel perspective is explored in \cite{afm} and applications to random matrix theory are detailed in \cite{Bleher_Kuijlaars}. 

Mixed multiple orthogonal polynomials, along with the corresponding Riemann–Hilbert problem, have found applications in various areas such as Brownian bridges or non-intersecting Brownian motions \cite{Evi_Arno}, as well as in the study of multicomponent Toda systems, cf. \cite{adler,afm}. Moreover, mixed multiple orthogonal polynomials have been utilized in number theory, notably in the proof by Apéry \cite{Apery} that $\zeta(3)$ is irrational, and in demonstrating the irrationality of certain values of the $\zeta$ function at odd integers \cite{Ball_Rivoal}. 

In \cite{ulises}, the authors delved into the logarithmic and ratio asymptotics of linear forms constructed from a Nikishin system. This system satisfies orthogonality conditions with respect to a set of measures generated by a second Nikishin system.  Furthermore, in \cite{ulises2}, a comprehensive investigation was conducted into a broad class of mixed-type multiple orthogonal polynomials, along with an examination of the properties of their corresponding zeros.

Recent research has underscored the importance of mixed multiple orthogonal polynomials in the Favard spectral description of banded bounded semi-infinite matrices. This connection has been explored in various works such as \cite{aim, phys-scrip, BTP, Contemporary}, with further insights available in \cite{laa}. Additionally, these polynomials play a crucial role in the realm of Markov chains and random walks extending beyond birth and death processes, as demonstrated in \cite{CRM,finite,hypergeometric,JP}.

In 1858, the German mathematician Elwin Christoffel \cite{christoffel} embarked on an exploration of Gaussian quadrature rules, with the goal of revealing explicit formulas linking sequences of orthogonal polynomials under different measures. Specifically, he investigated the Lebesgue measure $\mathrm{d}\mu = \mathrm{d}x$ and a modified measure $\d\hat{\mu}(x) = p(x) \d\mu(x)$, where $p(x) = (x - q_1) \cdots (x - q_N)$ is a polynomial with roots outside the support of $\mathrm{d}\mu$. Christoffel's research  aimed to comprehend the distribution of zeros, the nodes in such quadrature rules \cite{Uva}. The resulting Christoffel formula, is documented in various classical textbooks on orthogonal polynomials such as \cite{Chi,Sze,Gaut}. 
For a concise  fresher overview of Christoffel and Geronimus, interested readers may refer to \cite{manas}.

These transformations extend beyond measures to encompass a broader setting involving linear functionals \cite{ahi,Chi,Sze}. For a moment linear functional $u$, its canonical or elementary Christoffel transformation involves defining a new moment functional $\hat{u} = (x - a)u$, where $a \in \mathbb{R}$,  \cite{Bue1, Chi, Yoon}. Conversely, the right inverse of a Christoffel transformation is termed the Geronimus transformation. In simpler terms, given a moment linear functional $u$, its elementary or canonical Geronimus transformation yields a new moment linear functional $\check{u}$ satisfying $(x - a)\check{u} = u$. Notably, $\check{u}$ depends on a free parameter in this case  \cite{Geronimus,Maro}. Furthermore, a general Christoffel transformation's right inverse is known as a multiple Geronimus transformation  \cite{DereM}.

These transformations collectively fall under the umbrella of Darboux transformations, a term first coined in the context of integrable systems \cite{matveev}. Gaston Darboux explicitly treated these transformations in 1878 while studying the Sturm–Liouville theory  \cite{darboux2,moutard}.
In the framework of orthogonal polynomials on the real line, factorization of Jacobi matrices similar to these transformations has been investigated  \cite{Bue1,Yoon}. They also play a significant role in analyzing bispectral problems \cite{gru,gru2}.

A crucial aspect of canonical Christoffel transformations lies in their relation to $LU$ factorization (and its flipped version, $UL$ factorization) of the Jacobi matrix. This factorization emerges from a three-term recurrence relation satisfied by a sequence of monic polynomials associated with a nontrivial probability measure $\mu$. Such factorization facilitates the derivation of another Jacobi matrix $\hat{J}$ and its corresponding sequence of monic polynomials $\{\hat{P}_{n}(x)\}_{n=0}^{\infty}$, which are orthogonal with respect to the canonical Christoffel transformation of the measure $\mu$.

Additionally, for a moment linear functional $u$, the Markov--Stieltjes function $S(x)$ plays a pivotal role in orthogonal polynomial theory. It maintains a close relationship with the measure associated with $u$ as well as its rational Padé approximation \cite{Bre,Karl}. When considering the canonical Christoffel transformation $\hat{u}$ of the linear functional $u$, its Stieltjes function $\hat{S}(x) = (x - a)S(x) - u_0$, representing a specific case of spectral linear transformations \cite{Zhe}.

In a series of papers, we have delved into the intricacies of Christoffel and Geronimus transformations within the realm of matrix polynomials. Our exploration commenced with \cite{AAGMM}, where we scrutinized Christoffel transformations applied specifically to monic matrix orthogonal polynomials. Expanding upon this groundwork, our research continued in \cite{AGMM}, where we not only discussed Geronimus transformations within the matrix setting but also introduced spectral techniques for monic perturbations. While the non-monic case was also addressed, it was approached without utilizing spectral techniques. Lastly, in \cite{AGMM2}, we explored  the Geronimus–Uvarov scenario, delving deep into its implications, particularly focusing on its applications to non-Abelian Toda lattices.

In addition, our research extended to \cite{bfm}, where for non mixed multiple orthogonal polynomials we delved into the theory concerning the Christoffel and Geronimus perturbations of two weights. Moreover,  connection formulas between type II multiple orthogonal polynomials, type I linear forms, and vector Stieltjes–Markov vector functions were presented. The perturbation matrix polynomials  were not necessarily monic but  belonged to a restricted class.

In this paper, we investigate  general Christoffel perturbations of mixed multiple orthogonal polynomials. Unlike previous works, we consider a rectangular matrix of measures, allowing for a broad class of perturbations that accommodate Christoffel formulas. Our discussion extends beyond the scope of \cite{AAGMM}, as we permit any rank for the leading coefficient of the polynomial perturbation. Furthermore, our analysis surpasses that of \cite{bfm}, not only due to our exploration of the mixed case but also because our leading and subleading coefficients encompass a broader range, including those previously considered in \cite{bfm}. Importantly, we establish a condition—based on determinants of non-perturbed polynomials and spectral data of the perturbation—that guarantees the existence of perturbed mixed multiple orthogonal polynomials. Unlike the solely necessary condition obtained in \cite{bfm},  we provide her both necessary and sufficient conditions for this existence.

The paper is structured as follows: We commence this introduction by covering some foundational concepts concerning mixed multiple orthogonal polynomials and the spectral theory of matrix polynomials. In the subsequent section, we delve into the form of our perturbation matrix polynomial, which performs right multiplication on the matrix of measures. 

Following this, we present a simple yet non-trivial example that surpasses the framework of \cite{bfm}, showcasing the ideas generalized in \S \ref{Section: General}. Here, we discuss a very general perturbation with simple eigenvalues and provide corresponding Christoffel formulas in Theorem \ref{Theorem:Christoffel_Formulas}. Additionally, we address the necessary modifications required to consider the case with non-simple eigenvalues in the spectrum of the matrix perturbation. 
Moreover, we discuss left multiplication in this section, offering a comprehensive view of perturbations from both sides.

Lastly, in \S \ref{S:Criteria}, we unveil in Theorem \ref{Theorem: Criteria} the characterization for the existence of perturbed mixed multiple orthogonality. This characterization is articulated in terms of determinants of the original orthogonal polynomials evaluated in the spectrum of the perturbation matrix polynomial.

	
	\subsection{Mixed Multiple Orthogonal Polynomials}

	Let's delve into the scenario of a rectangular $q\times p$ matrix of measures:
	\begin{align*}
		\d	\mu=\begin{bNiceMatrix}
			\d\mu_{1,1}&\Cdots &\d\mu_{1,p}\\
			\Vdots & & \Vdots\\
			\d	\mu_{q,1}&\Cdots &\d\mu_{q,p}
		\end{bNiceMatrix},
	\end{align*}
	where the measures \(\mu_{b,a}\) are  supported on the interval \(\Delta \subseteq \mathbb{R}\).

	For \(r \in \mathbb{N}\coloneq\{1,2,3,\dots\}\), we consider the matrix of monomials:
	\[
	X_{[r]}(x) = 
	\begin{bNiceMatrix}
		I_r      \\
		xI_r     \\
		x^2 I_r  \\
		\Vdots
	\end{bNiceMatrix}.
	\]
	
	The moment matrix is defined as:
	\[
	\mathscr{M}\coloneqq \int_{\Delta} X_{[q]}(x) \, \mathrm{d}\mu(x) \, X_{[p]}^\top(x).
	\]

	If all the leading principal submatrices \(\mathscr{M}^{[k]}\) are nonsingular, then the Gauss--Borel factorization exists:
	\[
	\mathscr{M} = \mathscr{L}^{-1} \mathscr{U}^{-1},
	\]
	where \(\mathscr{L}\) is a nonsingular lower triangular semi-infinite matrix and \(\mathscr{U}\) is a nonsingular upper triangular matrix. 	It's important to note that this factorization is not unique due to the freedom:
	\[
	\mathscr{L} \to \mathscr{d}^{-1}\mathscr{L}, \quad \mathscr{U} \to \mathscr{U}\mathscr{d},
	\]
	where \(\mathscr{d}\) is any nonsingular diagonal matrix.
	
	Each choice of the invertible diagonal matrix \(\mathscr{d}\) results in a distinct factorization. Two important normalizations are:
	\begin{enumerate}
		\item The left normalization involves setting \(\mathscr{L}\) as a lower unitriangular matrix. When applicable, we will represent the corresponding triangular matrices as \(S\) and \(\mathscr{U}_L\).
		\item The right normalization involves setting \(\mathscr{U}\) as an upper unitriangular matrix. When applicable, we will denote the corresponding triangular matrices as \(\mathscr{L}_R\) and \(\bar S^\top\).
	\end{enumerate}
	
	Note that the Gauss--Borel factorization can be uniquely expressed as:
\begin{equation}\label{eq:Gauss-Borel}
		\mathscr{M} = S^{-1} H\bar S^{-\top},
\end{equation}

	in terms of lower unitriangular matrices $S,\bar S$ and a nonsingular diagonal matrix $H$. 
	
	
	Associated with the Gauss--Borel  factorization, let's consider the following matrix polynomials:
	\[
	\begin{aligned}
		B(x) &= \mathscr{L} X_{[q]}(x), & A(x) &= X_{[p]}^\top(x) \mathscr{U}.
	\end{aligned}
	\]
	The normalization used in this paper is that the $B$ are monic, i.e.
		\[
	\begin{aligned}
		B(x) &= S X_{[q]}(x), & A(x) &= X_{[p]}^\top(x) \bar S^\top H^{-1}.
	\end{aligned}
	\]
	We represent these matrices in terms of their polynomial entries as follows:
	\[
	\begin{aligned}
		B &= \begin{bNiceMatrix}
			B^{(1)}_0 & \Cdots & B^{(q)}_0 \\[2pt]
			B^{(1)}_1 & \Cdots & B^{(q)}_1 \\[2pt]
			B^{(1)}_2 & \Cdots & B^{(q)}_2 \\
			\Vdots[shorten-end=-0pt] & & \Vdots[shorten-end=-0pt]
		\end{bNiceMatrix}, &
		A &= \left[\begin{NiceMatrix}
			A^{(1)}_0 & A^{(1)}_1 & A^{(1)}_2 & \Cdots \\
			\Vdots & \Vdots & \Vdots & \\
			A^{(p)}_0 & A^{(p)}_1 & A^{(p)}_2 & \Cdots
		\end{NiceMatrix}\right].
	\end{aligned}
	\]
	We have the following relations:
	\[
	\int_{\Delta} B(x) \, \mathrm{d}\mu(x) \, A(x) = I,
	\]
	whose entries are the biorthogonality relations:
	\[
	\int_{\Delta} \sum_{b=1}^q \sum_{a=1}^p B^{(b)}_n(x) \, \mathrm{d}\mu_{b,a}(x) \, A^{(a)}_m(x) = \delta_{n,m}.
	\]
	
	From the Gauss--Borel factorization, it also follows that:
	\begin{align*}
		\int_\Delta B(x) \, \mathrm{d}\mu(x) X_{[p]}^\top(x) &= \mathscr{U}^{-1}, \\
		\int_\Delta X_{[p]}(x) \, \mathrm{d}\mu(x) A(x) &= \mathscr{L}^{-1},
	\end{align*}
	which, by entries, represent the following quasi-diagonal mixed multiple orthogonality relations:
	\begin{align*}
		\int_\Delta x^l \sum_{a=1}^p \mathrm{d}\mu_{b,a}(x) A_n^{(a)}(x) &= 0, 
		& \begin{aligned}
			& b \in \{1, \dots, q\}, \quad l \in \left\{0, \dots, \left\lceil\frac{n-b+2}{q}\right\rceil-1\right\},
		\end{aligned} \\
		\int_\Delta \sum_{b=1}^q B_n^{(b)}(x) \mathrm{d}\mu_{b,a}(x) x^l &= 0, 
		& \begin{aligned}
			& a \in \{1, \dots, p\}, \quad l \in \left\{0, \dots, \left\lceil\frac{n-a+2}{p}\right\rceil-1\right\}.
		\end{aligned}
	\end{align*}
	
	
	For \(r \in \mathbb{N}_0\), the shift block matrix is:
	\[
	\Lambda_{[r]} \coloneq 
	\left[
	\begin{NiceMatrix}
		0_r & I_r & 0_r & \Cdots \\
		0_r & 0_r & I_r & \Ddots \\
		0_r & 0_r & 0_r & \Ddots \\
		\Vdots & \Ddots[shorten-end=3pt] & \Ddots[shorten-end=7pt] & \Ddots[shorten-end=9pt]
	\end{NiceMatrix}
	\right],
	\]
	and for \(r = 1\) we denote \(\Lambda_{[1]}\) as \(\Lambda\). Note that \(\Lambda_{[r]} = \Lambda^r\). These shift matrices have the important property:
	\[
	\Lambda_{[r]}X_{[r]}(x) = x X_{[r]}(x).
	\]
	
	The moment matrix \(\mathscr{M}\) possesses a Hankel-type symmetry relation that can be expressed as:
	\[
	\Lambda_{[q]}\mathscr{M} = \mathscr{M}\Lambda_{[p]}^\top.
	\]
	From this relation and the Gauss--Borel factorization, we derive:
	\begin{align}\label{eq:banded_recurrence_matrix}
		T = \mathscr{L} \Lambda_{[q]} \mathscr{L}^{-1} = \mathscr{U}^{-1} \Lambda_{[p]}^\top \mathscr{U}.
	\end{align}
	Therefore, the  matrix \(T\) is a \((p,q)\)-banded matrix, with \(p\) subdiagonals and \(q\) superdiagonals. Moreover, the following relations are satisfied
	\[
	T B(x) = x B(x), \quad A(x) T = x A(x),
	\]
	meaning \(B\) and \(A\) are right and left eigenvectors, respectively. These equations represent recurrence relations among the mixed multiple orthogonal polynomials. Therefore, \(T\) is known as the recurrence matrix.
	
	 The Christoffel--Darboux (CD) kernel polynomial  $K^{[n]}(x,y)$ are  defined by:
	\begin{align} \label{CDkernel}
		K^{[n]}(x,y) & = A^{[n]}(x)B^{[n]}(y) = \begin{bNiceMatrix}
			A_0^{(1)}(x) & \Cdots & A_{n-1}^{(1)}(x) \\
			\Vdots & & \Vdots \\
			A_0^{(p)}(x) & \Cdots & A_{n-1}^{(p)}(x)
		\end{bNiceMatrix} \begin{bNiceMatrix}
			B_0^{(1)}(y) & \Cdots & B_0^{(q)}(y) \\
			\Vdots & & \Vdots \\
			B_{n-1}^{(1)}(y) & \Cdots & B_{n-1}^{(q)}(y) 
		\end{bNiceMatrix},\\
		K^{[n]}_{a,b}(x,y) & = \sum_{i=0}^{n-1}A^{(a)}_i(x)B_i^{(b)}(y).
	\end{align}
	
 Lastly, it's noteworthy to mention that when $q=p$, mixed multiple orthogonality encompasses matrix orthogonality.
	
	\subsection{Canonical Set of Jordan Chains and Divisibility for Matrix Polynomials} \label{S:CSoJC}
		
Following \cite{MatrixPoly}, we present herein some fundamental results regarding matrix polynomials, pivotal for our subsequent analysis.
	 Our focus lies on  regular matrix   polynomials, i.e. matrix polynomials
	\[ \begin{aligned}
		R(x) & = \sum_{l=0}^N R_l x^l , & R_l &\in \mathbb{C}^{p \times p} ,
	\end{aligned} \]
	such that its determinant it is not identically zero.  The leading coefficient $R_N$ needs not be the identity or even invertible. Consequently, for the degree of $\det R(x)$  we find
\[
\begin{aligned}
\deg \det R(x)&=Np - r,  &r &\in \{0,\dots,Np-1\}.
\end{aligned}
 \]
 The eigenvalues of the matrix polynomial $R(x)$ is by definition the set of the zeros of $\det R(x)$.

As our study progresses, we will impose additional constraints on these matrix polynomials.
	\begin{Proposition}[Smith Form]\label{SmithForm}
	Each matrix polynomial  can be expressed in a form known as the Smith form:
	\[ R(x) = E(x)D(x)F(x), \]
	where $E(x)$ and $F(x)$ possess constant determinants, and $D(x)$ represents a diagonal matrix polynomial. Notably, $D(x)$ exhibits the structure:
	\[ 
	D(x) = \diag \left(
		\prod_{i=1}^{M}(x-x_i)^{\kappa_{i,1}}, \prod_{i=1}^{M}(x-x_i)^{\kappa_{i,2}}, \dots , \prod_{i=1}^{M}(x-x_i)^{\kappa_{i,p}}\right),
		\]
	where $x_i$, with $i\in\{1,\cdots, M\}$, denote the zeros of $\det R(x)$. Here, $\kappa_{i,j}$ represents partial multiplicities. If $K_i$ denotes the multiplicity of $x_i$ as a zero of $\det R(x)$, then:
	\[ 
\begin{aligned}
		K_i &= \sum_{j=1}^{p} \kappa_{i,j}, & \sum_{i=1}^{M}\sum_{j=1}^p \kappa_{i,j} &= Np - r,
\end{aligned}
	 \]
	where $M$ signifies the count of distinct roots of $\det R(x)$, and the partial multiplicities may assume zero values in certain scenarios.
	\end{Proposition}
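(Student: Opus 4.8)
The plan is to carry out the classical reduction of a matrix over the polynomial ring $\C[x]$ to diagonal form by elementary operations, and then to specialize the resulting invariant factors to the algebraically closed field $\C$. First I would record the structural fact that $\C[x]$ is a Euclidean domain, hence a principal ideal domain, with the degree of a polynomial serving as the Euclidean norm. The admissible moves are the elementary row and column operations over $\C[x]$: interchanging two rows (or columns), scaling a row (or column) by a nonzero constant in $\C$, and adding a polynomial multiple of one row (or column) to another. Each such move is realized by left or right multiplication by a matrix polynomial whose determinant is a nonzero constant; the accumulated product of the left factors will become $E(x)^{-1}$ and that of the right factors $F(x)^{-1}$, so that $E(x)$ and $F(x)$ automatically have constant determinants.

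Next I would run the reduction itself. Among all nonzero entries of $R(x)$, select one of least degree and bring it to the $(1,1)$ position. Using the division algorithm, subtract suitable multiples of the first row and first column to replace every other entry in the first row and first column by a remainder of strictly smaller degree. The main obstacle surfaces exactly here: the corner entry need not divide all remaining entries, so these remainders may fail to vanish. In that event one picks any such nonzero remainder, which has strictly smaller degree than the current corner, relocates it to the corner, and repeats. Because the corner degree strictly decreases at each failure and degrees are nonnegative integers, this loop terminates; when it halts, the first row and first column are cleared except for the corner, and the corner divides every entry of the trailing $(p-1)\times(p-1)$ block. Proceeding by induction on the matrix size yields a diagonal matrix polynomial $\diag(d_1(x),\dots,d_p(x))$ with the divisibility chain $d_1\mid d_2\mid\cdots\mid d_p$, the invariant factors of $R(x)$ (the proposition demands only the diagonal shape, but this chain comes for free).

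Finally I would translate this into the stated form. Since $\C$ is algebraically closed, each invariant factor splits completely into linear factors, so after collecting the distinct roots $x_1,\dots,x_M$ of $\det R(x)$ I would write $d_j(x)=\prod_{i=1}^M (x-x_i)^{\kappa_{i,j}}$ with partial multiplicities $\kappa_{i,j}\ge 0$; this is precisely the asserted $D(x)$. The multiplicity relations then follow by taking determinants: because $\det E(x)$ and $\det F(x)$ are nonzero constants, $\det R(x)=c\,\det D(x)=c\prod_{i=1}^M (x-x_i)^{\sum_{j}\kappa_{i,j}}$ for some $c\in\C\setminus\{0\}$, whence the multiplicity of $x_i$ equals $K_i=\sum_{j=1}^p \kappa_{i,j}$. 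Summing over the roots and invoking regularity, which gives $\deg\det R(x)=Np-r$, produces $\sum_{i=1}^M\sum_{j=1}^p \kappa_{i,j}=Np-r$, completing the argument. The only genuinely delicate point throughout is the termination of the corner-reduction loop, and it rests entirely on the strict decrease of the Euclidean degree at each step.
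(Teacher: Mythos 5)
Your proof is correct, and it is essentially the canonical argument: the paper itself offers no proof of Proposition \ref{SmithForm}, which is imported from \cite{MatrixPoly}, and the proof given there is precisely the Euclidean reduction over $\C[x]$ that you carry out, followed by splitting the diagonal entries over the algebraically closed field and taking determinants. Two small remarks. First, your parenthetical claim that the divisibility chain $d_1\mid d_2\mid\cdots\mid d_p$ ``comes for free'' overstates what your loop delivers: clearing the first row and column does not by itself make the corner entry divide the trailing $(p-1)\times(p-1)$ block, and the chain requires the additional standard move of adding a row of the trailing block to the first row whenever divisibility fails, then reducing again (termination once more by strict decrease of the corner's degree). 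Since the proposition only asserts a diagonal $D(x)$ with the stated factorizations, this inaccuracy is immaterial to your proof of the statement as given. Second, regularity enters slightly earlier than where you invoke it: $\det R\not\equiv 0$ is what guarantees that every diagonal entry $d_j(x)$ is a nonzero polynomial, so that the factorizations $d_j(x)=\prod_{i=1}^{M}(x-x_i)^{\kappa_{i,j}}$ make sense (with all $\kappa_{i,j}=0$ when $d_j$ is a nonzero constant); also note that in this paper $\deg\det R(x)=Np-r$ is the \emph{definition} of $r$ rather than a consequence of regularity, so once you have $\det R(x)=c\,\det D(x)$ with $c\in\C\setminus\{0\}$, both multiplicity identities follow immediately, exactly as you conclude.
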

Moving forward, for the sake of simplicity in notation, we will focus on a single eigenvalue $x_0$ with a multiplicity of $K$.
	\begin{Definition}[Jordan Chains]
\begin{enumerate}
	\item 		A Jordan chain of $R(x)$ associated with $x_0 \in \mathbb{C}$ comprises a set of $L+1$ vectors that adhere to the relation
	\[	\begin{aligned}
			\sum_{l=0}^i \frac{1}{l!}\boldsymbol{v}_{L-l}R^{(l)}(x_0) & = 0, & i &\in \{0, \dots ,L\} .
		\end{aligned}\]
	\item 
	A canonical set of Jordan chains of $R(x)$ corresponding to $x_0$ consists of $K_i$ vectors, organized as follows:
	\[ 
	\{ \boldsymbol{v}_{1,0},\boldsymbol{v}_{1,1}, \dots, \boldsymbol{v}_{1,\kappa_1-1}, \boldsymbol{v}_{2,0}, \boldsymbol{v}_{2,1}, \dots,\boldsymbol{v}_{2,\kappa_2-1}, \dots,\: \boldsymbol{v}_{s,0},\boldsymbol{v}_{s,1}, \cdots, \boldsymbol{v}_{s,\kappa_s-1} \}, 
	\]
	where $s \leq p$ and $\sum_{i=1}^s \kappa_{i} = K$. In this arrangement, each subset of vectors \[\{\boldsymbol{v}_{i,0},\: \boldsymbol{v}_{i,1}, \cdots, \boldsymbol{v}_{i,\kappa_i-1} \}\] constitutes a Jordan chain of length $\kappa_i$. Crucially, the vectors $\boldsymbol{v}_{i,0}$, where $i\in\{1,\cdots,s\}$, are linearly independent.		
\end{enumerate}
	\end{Definition}
Lastly, let's introduce a theorem concerning the divisibility of matrix polynomials, see
	\cite[Corollary 7.11., pag. 203]{MatrixPoly}.
	\begin{Theorem}[Matrix Polynomials Divisibility]\label{Theorem}
Let us consider two regular $p \times p$ matrix polynomials $R(x)$ and $A(x)$. Then, $R(x)$ is a right/left divisor of $A(x)$ if and only if each Jordan chain of $R(x)$ coincides with a Jordan chain of $A(x)$ having the same eigenvalue.
	\end{Theorem}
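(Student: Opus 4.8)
The plan is to recast the combinatorial Jordan-chain condition as an analytic divisibility statement and then reduce the global divisibility of matrix polynomials to a local pole analysis at each eigenvalue. I will treat the right-divisor case with right (column) Jordan chains, whose defining relations read $\sum_{l=0}^{i}\frac{1}{l!}R^{(l)}(x_0)\boldsymbol v_{i-l}=0$ for $i=0,\dots,L$, i.e.\ the column analogue of the row convention used in the Definition above; the left-divisor case claimed in the statement follows verbatim after transposition.

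First I would establish the \emph{root function} characterization of Jordan chains: a string $\{\boldsymbol v_0,\dots,\boldsymbol v_L\}$ is a Jordan chain of $R(x)$ at $x_0$ if and only if the vector polynomial $\boldsymbol v(x)=\sum_{k=0}^L \boldsymbol v_k (x-x_0)^k$ satisfies
\[
R(x)\boldsymbol v(x)=O\big((x-x_0)^{L+1}\big),\qquad x\to x_0 .
\]
This is immediate once one expands $R(x)=\sum_{l\ge 0}\frac{1}{l!}R^{(l)}(x_0)(x-x_0)^l$ and collects powers of $(x-x_0)$, the coefficient of $(x-x_0)^i$ being precisely the $i$-th defining relation of a Jordan chain. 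With this dictionary in hand the ``only if'' direction is short: if $R$ is a right divisor, $A(x)=Q(x)R(x)$ for a matrix polynomial $Q$, then $A(x)\boldsymbol v(x)=Q(x)\,R(x)\boldsymbol v(x)=O((x-x_0)^{L+1})$ because $Q$ is analytic at $x_0$, so every Jordan chain of $R$ is a Jordan chain of $A$ with the same eigenvalue.

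For the converse I would exploit that, $R$ being regular, $R(x)^{-1}=\operatorname{adj}R(x)/\det R(x)$ is a rational matrix function whose only finite poles sit at the zeros of $\det R(x)$. Hence $Q(x):=A(x)R(x)^{-1}$ is rational with poles confined to the eigenvalues of $R$, and $R$ right-divides $A$ exactly when $Q$ is a polynomial; since a rational matrix with no finite poles is automatically a polynomial, it suffices to prove that the hypothesis forces $Q$ to be analytic at each eigenvalue $x_0$. This already explains why no condition ``at infinity'' is needed and why the possible singularity of the leading coefficient $R_N$ is harmless. Localizing at $x_0$, I would invoke the Smith form $R(x)=E(x)D(x)F(x)$ of Proposition \ref{SmithForm}: as $E,F$ are unimodular, $E^{-1},F^{-1}$ are themselves matrix polynomials, so analyticity of $Q=AF^{-1}D^{-1}E^{-1}$ at $x_0$ is equivalent to analyticity of $\big(AF^{-1}\big)D^{-1}$, i.e.\ to the $j$-th column of $AF^{-1}$ vanishing to order at least $\kappa_{0,j}$ at $x_0$ for every $j$.

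It remains to match these column conditions to the shared-Jordan-chain hypothesis, which is where I expect the real work to lie. The columns $F(x)^{-1}\boldsymbol e_j$ furnish, after truncation, a canonical set of Jordan chains of $R$ at $x_0$ of lengths $\kappa_{0,j}$, since $R(x)F(x)^{-1}\boldsymbol e_j=E(x)D(x)\boldsymbol e_j=O((x-x_0)^{\kappa_{0,j}})$ and the leading vectors $F(x_0)^{-1}\boldsymbol e_j$ are linearly independent. By hypothesis each such chain is also a Jordan chain of $A$, whence $A(x)F(x)^{-1}\boldsymbol e_j=O((x-x_0)^{\kappa_{0,j}})$, which is exactly the column-vanishing required above; running this over all eigenvalues of $R$ shows $Q$ has no finite poles, hence is a polynomial, and $R$ right-divides $A$. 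The main obstacle, beyond bookkeeping with the partial multiplicities, is to argue cleanly that a \emph{canonical} set of chains—rather than a single arbitrary chain—is precisely what encodes the full local Smith data, so that all the column-vanishing conditions are captured simultaneously; this is what upgrades the argument from necessity to sufficiency.
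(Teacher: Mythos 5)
Your argument is correct, but note that there is no internal proof to compare against: the paper imports Theorem \ref{Theorem} verbatim from Gohberg--Lancaster--Rodman \cite[Corollary 7.11]{MatrixPoly}, where it is obtained through the spectral machinery of Jordan pairs and common restrictions of divisors. Your route is genuinely different and more self-contained. You translate chains into root functions ($R(x)\boldsymbol{v}(x)=O((x-x_0)^{L+1})$), get necessity for free from $A=QR$, and for sufficiency reduce divisibility to the absence of finite poles of the rational matrix $Q(x)=A(x)R(x)^{-1}=A(x)F(x)^{-1}D(x)^{-1}E(x)^{-1}$; unimodularity of $E,F$ turns this into the local condition that the $j$-th column of $AF^{-1}$ vanish to order $\kappa_{0,j}$ at each eigenvalue, and the truncated Taylor polynomials of $F^{-1}\boldsymbol{e}_j$ are Jordan chains of $R$ with linearly independent leading vectors $F(x_0)^{-1}\boldsymbol{e}_j$ (the case $\kappa_{0,j}=0$ imposing nothing), so the hypothesis delivers exactly the needed vanishing orders. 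What this buys is a proof using only Proposition \ref{SmithForm}, already stated in the paper, in place of the heavier spectral-pair apparatus of \cite{MatrixPoly}. Two remarks. First, the ``main obstacle'' you flag in your closing sentence is already dispatched by your own construction: since the hypothesis grants you \emph{every} Jordan chain of $R$, you never need to argue that an arbitrary canonical set encodes the full local Smith data --- the particular set built from the columns of $F^{-1}$ does so by construction, and your proof in fact establishes the formally stronger statement that sharing one canonical set per eigenvalue already forces divisibility, consistent with the sharper formulation in \cite{MatrixPoly}. Second, your opening clarification of conventions is necessary rather than pedantic: the paper's Definition uses left (row) chains and the theorem's ``right/left divisor'' phrasing is loose; the correct pairing --- right chains with right divisors $A=QR$, left chains with left divisors $A=RQ$, interchanged by transposition --- is precisely the reading used later in the proof of Theorem \ref{Theorem: Criteria}, where row chains produce the factorization $\widetilde{A}(x)=R(x)\widetilde{\widetilde{A}}(x)$.
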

	\section{The Matrix Structure of the Polynomial Perturbation}
We'll examine  matrix polynomials  
\[	\begin{aligned}
	R(x) & = \sum_{l=0}^N  R_{l}x^l, & R_{l} &  \in \mathbb{C}^{p\times p}, 
\end{aligned}\]
with  leading and sub-leading matrices taking the form:
	\begin{align} \tag{C1}\label{LeadingMatrixConditions}
		R_{N}  & = \begin{bNiceArray}{cw{c}{1cm}c||w{c}{2cm}c}
	\Block{3-2}{0_{(p-r)\times r} }	&	& \Block{3-3}{\left[ t_{N} \right]_{(p-r)\times (p-r)}} &&\\\\\\
		\Block{2-2}{	0_{r\times r} }& &\Block{2-3}{0_{r \times (p-r)}} &&\\\\
		\end{bNiceArray}, & R_{N-1}  & = \begin{bmatrix}
			\left[ R^1_{N-1} \right]_{(p-r)\times r} & \left[ R^2_{N-1} \right]_{(p-r)\times (p-r)} \\ \\
			\left[ t_{N-1} \right]_{r\times r} & \left[ R^4_{N-1} \right]_{r \times (p-r)}
		\end{bmatrix},
	\end{align}
	where $r$ can take values in $\{0,\cdots,p-1\}$, and $\left[ t_{N} \right]_{(p-r)\times (p-r)}$ as well as $\left[ t_{N-1} \right]_{r\times r}$ are upper triangular matrices with nonzero determinant.
	\begin{Proposition} \label{prop1}
	Consider three matrix polynomials, $R(x)$, $\hat{R}(x)$, and $\mathcal{A}(x)$, where $R(x)$ and $\hat{R}(x)$ share the same degree and meet the conditions \eqref{LeadingMatrixConditions}. If these matrices satisfy:
	\begin{equation}\label{eq:divisibility}
		\hat{R}(x) = R(x)\mathcal{A}(x) 
	\end{equation}
	then, $\mathcal{A}(x)$ must be a degree-zero matrix, upper triangular, and possess a nonzero determinant.
	\end{Proposition}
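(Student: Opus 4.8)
The plan is to compare the matrix coefficients of the identity $\hat R(x)=R(x)\mathcal A(x)$ starting from the highest powers of $x$, exploiting the complementary positions of the nonsingular triangular blocks $t_N$ and $t_{N-1}$ appearing in \eqref{LeadingMatrixConditions}. Write $\mathcal A(x)=\sum_{k=0}^m A_k x^k$ with $A_m\neq 0$, and split each coefficient according to the column partition $r\,|\,(p-r)$ of $R_N$ and $R_{N-1}$, i.e. $A_k=\begin{bmatrix}U_k\\ V_k\end{bmatrix}$ with $U_k$ the top $r\times p$ block and $V_k$ the bottom $(p-r)\times p$ block. Since the bottom $r$ rows of $R_N$ vanish and its upper-right block is $t_N$, one gets $R_N A_k=\begin{bmatrix}t_N V_k\\ 0\end{bmatrix}$, while the bottom block of $R_{N-1}A_k$ equals $t_{N-1}U_k+R^4_{N-1}V_k$. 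These two elementary identities are the only computations needed. (Throughout, $\hat t_N$ and $\hat t_{N-1}$ denote the triangular blocks of $\hat R$ in \eqref{LeadingMatrixConditions}.)

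First I would show $m=0$. The coefficient of $x^{N+m}$ gives $R_N A_m=0$ (as $\deg\hat R=N$), hence $t_N V_m=0$, and nonsingularity of $t_N$ forces $V_m=0$. Now look one degree lower. If $m\ge 2$, the coefficient of $x^{N+m-1}$ still vanishes, and its bottom block reduces to $t_{N-1}U_m=0$ (the term $R^4_{N-1}V_m$ drops out because $V_m=0$), so $U_m=0$ and $A_m=0$, a contradiction. If $m=1$, the coefficient of $x^N$ equals $\hat R_N$, whose bottom $r$ rows are zero by \eqref{LeadingMatrixConditions}; its bottom block is again $t_{N-1}U_1=0$, forcing $U_1=0$ and $A_1=0$, once more a contradiction. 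Hence $m=0$ and $\mathcal A=A_0$ is a constant matrix. I regard this exclusion of positive degree as the main obstacle: the determinantal identity $\det\hat R=\det R\,\det\mathcal A$, together with $\deg\det R=\deg\det\hat R=Np-r$, only yields that $\det\mathcal A$ is a nonzero constant, i.e. that $\mathcal A$ is unimodular, which by itself does not rule out a nonconstant $\mathcal A$. It is the structural argument above, using that $t_N$ and $t_{N-1}$ sit in complementary blocks, that actually kills the leading coefficient.

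Finally, with $\mathcal A=A_0$ I would read off the block structure of $A_0$ from the same two coefficient equations, now interpreted as genuine equalities. Writing $A_0=\begin{bmatrix}U_0^L & U_0^R\\ V_0^L & V_0^R\end{bmatrix}$ in the symmetric partition $r\,|\,(p-r)$, the equation $R_N A_0=\hat R_N$ reduces on its top block to $t_N\begin{bmatrix}V_0^L & V_0^R\end{bmatrix}=\begin{bmatrix}0 & \hat t_N\end{bmatrix}$, giving $V_0^L=0$ and $V_0^R=t_N^{-1}\hat t_N$, the latter being upper triangular as a product of an inverse upper triangular matrix and an upper triangular matrix. Next, the bottom block of $R_{N-1}A_0=\hat R_{N-1}$ reads $t_{N-1}U_0^L+R^4_{N-1}V_0^L=\hat t_{N-1}$, and since $V_0^L=0$ this gives $U_0^L=t_{N-1}^{-1}\hat t_{N-1}$, again upper triangular. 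Thus $A_0$ is block upper triangular (because $V_0^L=0$) with upper triangular diagonal blocks $U_0^L$ and $V_0^R$, hence upper triangular as a $p\times p$ matrix. Its determinant is $\det(t_{N-1}^{-1}\hat t_{N-1})\det(t_N^{-1}\hat t_N)\neq 0$; equivalently, nonvanishing of $\det A_0$ follows immediately from regularity of $\hat R$ via $\det\hat R(x)=\det A_0\,\det R(x)$. This completes the argument.
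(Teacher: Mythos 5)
Your proof is correct and follows essentially the same route as the paper's: you compare coefficients at the two top orders $x^{N+m}$ and $x^{N+m-1}$, using that $t_N$ acts on the bottom $(p-r)$ rows of $A_m$ while $t_{N-1}$ acts on its top $r$ rows, to force $m=0$, and then read off the upper triangular structure and invertibility of $A_0$ from $R_N A_0=\hat R_N$ and $R_{N-1}A_0=\hat R_{N-1}$, exactly as the paper does. The only cosmetic differences are your tidier row-block notation $U_k$, $V_k$ in place of the paper's four-block splitting, and your added (correct) remarks that the determinant identity $\det\hat R=\det R\,\det\mathcal A$ alone would only yield unimodularity, and that it gives an alternative proof of $\det A_0\neq 0$.
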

	\begin{proof}
We can express the matrix polynomial  Equation \eqref{eq:divisibility} in terms of its coefficients as follows:
		\[	
		\hat{R}(x) = \sum_{l=0}^N \hat{R}_{l}x^l =R(x)\mathcal{A}(x) = \sum_{l=0}^N\sum_{m=0}^M R_{l}A_m x^{l+m}.
		\]
		If $M \geq 1$, the resulting leading matrix, at order $x^{N+M}$, will satisfy: 
		\begin{align*}
			R_NA_M & = \begin{bmatrix}
				0_{(p-r)\times r} & \left[ t_N \right]_{(p-r)\times (p-r)} \\ \\
				0_{r\times r} & 0_{r \times (p-r)}
			\end{bmatrix} \begin{bmatrix}
				\left[ A^1_{M} \right]_{r\times r} & \left[ A^2_{M} \right]_{r \times (p-r)} \\ 
				\left[ A^3_{M} \right]_{(p-r) \times r} & \left[ A^4_{M} \right]_{(p-r)\times (p-r)}
			\end{bmatrix} \\ \
			& = \begin{bmatrix}
				\left[ t_N \right]_{(p-r)\times (p-r)}\left[ A^3_{M} \right]_{(p-r)\times r} & \left[ t_N \right]_{(p-r)\times (p-r)}\left[ A^4_{M} \right]_{(p-r)\times (p-r)} \\ 
				0_{r\times r} & 0_{r\times (p-r)}
			\end{bmatrix}  \\ 
			& = \begin{bmatrix}
				\left[ t_N A^3_{M} \right]_{(p-r) \times r} & \left[ t_N A^4_{M} \right]_{(p-r)\times (p-r)} \\ \\
				0_{r\times r} & 0_{r \times (p-r)}
			\end{bmatrix} = 0 .
		\end{align*}
		Consequently,
		\[ \left[ A^3_{M} \right]_{(p-r) \times r},\left[ A^4_{M} \right]_{(p-r)\times (p-r)} = 0. \]
		The sub-leading matrix, at order $x^{N+M-1}$, satisfies:
		\begin{align*}
			R_{N-1}A_M + R_{N}A_{M-1} & = 0 \: \: \text{if}\: \: M>1 ,\\
			R_{N-1}A_M + R_{N}A_{M-1} & = \hat{R}_N \: \: \text{if}\: \: M=1. 
		\end{align*}
	In both cases, the final $r$ rows of the resultant matrix are zero, and consequently, the product $R_{N}A_{M-1}$ will yield another matrix with the last $r$ rows also being zero. Therefore,
		\begin{align*}
			R_{N-1}A_M & = \begin{bmatrix}
				\left[ R^1_{N-1} \right]_{(p-r)\times r} & \left[ R^2_{N-1} \right]_{(p-r)\times (p-r)} \\ \\
				\left[ t_{N-1} \right]_{r\times r} & \left[ R^4_{N-1} \right]_{r \times (p-r)}
			\end{bmatrix}\begin{bmatrix}
				\left[ A^1_{M} \right]_{r\times r} & \left[ A^2_{M} \right]_{r \times (p-r)} \\ \\
				0_{(p-r) \times r} & 0_{(p-r)} 
			\end{bmatrix} \\
			& = \begin{bmatrix}
				\left[ R^1_{N-1}A_M^1 \right]_{(p-r) \times r} & \left[ R^1_{N-1}A_M^2 \right]_{(p-r)\times (p-r)} \\ \\
				\left[ t_{N-1} A^1_M\right]_{r\times r} & \left[ t_{N-1} A^2_M\right]_{r \times (p-r)}
			\end{bmatrix} = \begin{bNiceMatrix}
		 \ast & \Cdots & \ast \\ \Vdots & & \Vdots \\ &(p-r) \times p & \\&&\\
					\ast & \Cdots & \ast\\ \\
				&0_{r \times p}&\\
				&&
			\end{bNiceMatrix}.
		\end{align*}
	It follows that $\left[ A^1_M\right]_{r\times r}$ and $\left[ A^2_M\right]_{r \times (p-r)}$ are both zero. Since $A_M = 0$ for $M \geq 1$, $M$ must equal zero. At this point, it could have been more straightforward to express this result in terms of invertible matrices rather than utilizing the upper triangular matrices in the upper-right block of the leading matrix and in the lower-left block of the subleading matrix. Now, we will explicitly leverage the fact that they are both upper triangular and invertible.
		
		The fact that $A_0$ is upper triangular follows from:
		\begin{align*}
			R_NA_0 & = \begin{bmatrix}
				0_{(p-r)\times r} & \left[ t_N \right]_{(p-r)\times (p-r)} \\ \\
				0_{r\times r} & 0_{r \times (p-r)}
			\end{bmatrix} \begin{bmatrix}
				\left[ A^1_{0} \right]_{r\times r} & \left[ A^2_{0} \right]_{r \times (p-r)} \\ \\
				\left[ A^3_{0} \right]_{(p-r)\times r} & \left[ A^4_{0} \right]_{(p-r)\times (p-r)}
			\end{bmatrix}  \\
			& = \begin{bmatrix}
				\left[ t_N A^3_{0} \right]_{(p-r)\times r} & \left[ t_N A^4_{0} \right]_{(p-r)\times (p-r)} \\ \\
				0_{r\times r} & 0_{r \times (p-r)} 
			\end{bmatrix} = \begin{bmatrix}
				0_{(p-r)\times r} & \left[ \hat{t}_{N} \right]_{(p-r)\times (p-r)} \\ \\
				0_{r\times r} & 0_{r \times (p-r)}
			\end{bmatrix} = \hat{R}_N,
		\end{align*}
		which shows that $\left[ A^3_{0} \right]_{(p-r)\times r} = 0$ and $\left[ A^4_{0} \right]_{(p-r)\times (p-r)}=\left[ t_N^{-1} \hat{t}_{N} \right]_{(p-r)\times (p-r)}$. Thus, $\left[ A^4_{0} \right]_{(p-r)\times (p-r)}$ is an upper triangular matrix with nonzero determinant. For the next degree, we have the relations: 
		\begin{align*}
			R_{N-1}A_0 & = \begin{bmatrix}
				\left[ R^1_{N-1} \right]_{(p-r)\times r} & \left[ R^2_{N-1} \right]_{(p-r)\times (p-r)} \\ \\
				\left[ t_{N-1} \right]_{r\times r} & \left[ R^4_{N-1} \right]_{r \times (p-r)}
			\end{bmatrix} \begin{bmatrix}
				\left[ A_0^1 \right]_{r\times r} & \left[ A_0^2\right]_{r \times (p-r)} \\ \\
				0_{(p-r) \times r} & \left[ \hat{t}_{N} \right]_{(p-r)\times (p-r)}
			\end{bmatrix}  \\
			& = \begin{bmatrix}
				\left[ \ast \right]_{(p-r) \times r} & \left[ \ast \right]_{(p-r)\times (p-r)} \\ \\
				\left[ t_{N-1} A_0^1 \right]_{r\times r} & \left[ \ast \right]_{r \times (p-r)}
			\end{bmatrix} = \begin{bmatrix}
				\left[ \hat{R}^1_{N-1} \right]_{(p-r)\times r} & \left[ \hat{R}^2_{N-1} \right]_{(p-r)\times (p-r)} \\ \\
				\left[ \hat{t}_{N-1}\right]_{r\times r} & \left[ \hat{R}^4_{N-1} \right]_{r \times (p-r)}
			\end{bmatrix} = \hat{R}_{N-1}.
		\end{align*} It follows that $\left[ A_0^1 \right]_{r\times r} = \left[ t_{N-1}^{-1}\hat{t}_{N-1}\right]_{r\times r}$ is an upper triangular matrix with nonzero determinant. With all this, the result is proved.
	\end{proof}
	
	\begin{Corollary} \label{Corollary}
		If $\left[ \hat{t}_N \right]$ and/or $\left[ \hat{t}_{N-1} \right]$, which are the leading and sub-leading matrices, respectively, of the matrix polynomial $\hat{R}(x)$, have a zero  entry on the diagonal, the matrix $\mathcal{A}$ will be upper triangular with some zero element on the diagonal, and hence non-invertible. 
	\end{Corollary}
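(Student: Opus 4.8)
The plan is to extract the conclusion directly from the internal data of the proof of Proposition \ref{prop1}, without re-running the whole argument. The key observation is that that proof never invoked the invertibility of the corner blocks $[\hat{t}_N]$ and $[\hat{t}_{N-1}]$ of $\hat{R}(x)$: it used only the block \emph{shape} of $\hat{R}_N$ and $\hat{R}_{N-1}$ together with the invertibility of $[t_N]$ and $[t_{N-1}]$, which comes from $R(x)$ alone. Hence the same computation still forces $\mathcal{A}(x)=A_0$ to be a degree-zero upper triangular matrix and, more to the point, still yields the explicit block identities
\[ [A^4_0]_{(p-r)\times(p-r)} = [t_N^{-1}\hat{t}_N], \qquad [A^1_0]_{r\times r} = [t_{N-1}^{-1}\hat{t}_{N-1}]. \]

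First I would record that $\mathcal{A}=A_0$ has the block form
\[ A_0 = \begin{bmatrix} [A^1_0]_{r\times r} & [A^2_0]_{r\times(p-r)} \\ 0_{(p-r)\times r} & [A^4_0]_{(p-r)\times(p-r)} \end{bmatrix}, \]
so that it is block upper triangular, its diagonal is the concatenation of the diagonals of $[A^1_0]$ and $[A^4_0]$, and $\det\mathcal{A} = \det[A^1_0]\,\det[A^4_0]$.

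Next I would read off these two diagonals. Since the diagonal blocks $[A^1_0]$ and $[A^4_0]$ of the upper triangular matrix $A_0$ are themselves upper triangular, and $[t_N]$, $[t_{N-1}]$ are upper triangular, the identities above may be rewritten as $\hat{t}_N = t_N\,A^4_0$ and $\hat{t}_{N-1} = t_{N-1}\,A^1_0$, products of upper triangular matrices. Comparing diagonals gives $(\hat{t}_N)_{jj} = (t_N)_{jj}\,(A^4_0)_{jj}$ and $(\hat{t}_{N-1})_{ii} = (t_{N-1})_{ii}\,(A^1_0)_{ii}$, with $(t_N)_{jj}\neq 0$ and $(t_{N-1})_{ii}\neq 0$. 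Therefore $(A^4_0)_{jj}$ vanishes exactly when $(\hat{t}_N)_{jj}$ does, and likewise $(A^1_0)_{ii}$ vanishes exactly when $(\hat{t}_{N-1})_{ii}$ does.

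Finally I would conclude: a vanishing diagonal entry of $[\hat{t}_N]$ or of $[\hat{t}_{N-1}]$ produces a vanishing diagonal entry of $\mathcal{A}=A_0$; being upper triangular, $\det\mathcal{A}$ equals the product of its diagonal entries, so $\det\mathcal{A}=0$ and $\mathcal{A}$ is non-invertible. The only step that demands care is the first one—verifying that nothing in the derivation of the block identities in Proposition \ref{prop1} tacitly used $[\hat{t}_N]$ or $[\hat{t}_{N-1}]$ being invertible—and once that bookkeeping is confirmed the statement follows with no further work.
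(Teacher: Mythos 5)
Your proposal is correct and follows essentially the same route as the paper: it reuses the block identities $[A^4_0]=[t_N^{-1}\hat{t}_N]$ and $[A^1_0]=[t_{N-1}^{-1}\hat{t}_{N-1}]$ from the proof of Proposition \ref{prop1} (correctly observing that their derivation never needed $[\hat{t}_N]$ or $[\hat{t}_{N-1}]$ to be invertible) and concludes that a zero diagonal entry there forces a zero diagonal entry of the upper triangular $\mathcal{A}$. Your explicit entrywise comparison $(\hat{t}_N)_{jj}=(t_N)_{jj}(A^4_0)_{jj}$ simply makes precise what the paper asserts via non-invertibility of triangular matrices, and incidentally corrects a typo in the paper's proof, which writes $[A^1_0]=[t_N^{-1}\hat{t}_N]_{r\times r}$ where $[t_{N-1}^{-1}\hat{t}_{N-1}]$ is meant.
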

	\begin{proof}
		In the preceding proof, we arrived at $\left[ A^4_{0} \right]_{(p-r)\times (p-r)}=\left[ t_N^{-1} \hat{t}_{N} \right]_{(p-r)\times (p-r)}$ and $\left[ A^1_{0} \right]_{r\times r}=\left[ t_N^{-1} \hat{t}_{N} \right]_{r\times r}$, where the matrices were invertible. Under the given assumptions, the matrices $\left[ \hat{t}_N \right]$ and/or $\left[ \hat{t}_{N-1} \right]$ will not be invertible, and thus $\left[ A^4_{0} \right]_{(p-r)\times (p-r)}$ and/or $\left[ A^1_{0} \right]_{r\times r}$ will also not be invertible. The only possibility is that some element on the diagonal of $\mathcal{A}$ becomes zero. 
	\end{proof}
	
	From now on, we will work with matrix polynomials whose leading and sub-leading matrices, $\left[ t_{N} \right]_{(p-r)\times (p-r)}$ and $\left[ t_{N-1} \right]_{r\times r}$, are the identity matrix:
	\begin{align} \tag{C2}\label{CondicionesMatricesLideresFinales}
		R_{N}  & = \begin{bmatrix}
			0_{(p-r)\times r} & I_{(p-r)\times (p-r)} \\ \\
			0_{r\times r} & 0_{r \times (p-r)}
		\end{bmatrix}, & R_{N-1}  & = \begin{bmatrix}
			\left[ R^1_{N-1} \right]_{(p-r)\times r} & \left[ R^2_{N-1} \right]_{(p-r)\times (p-r)} \\ \\
			I_{r\times r} & \left[ R^4_{N-1} \right]_{r \times (p-r)}
		\end{bmatrix}.
	\end{align}
	\begin{Remark}
		As a particular case of this Proposition, it is easy to see that any matrix polynomial as in condition \eqref{LeadingMatrixConditions} can be obtained as the product of $R(x)$ by another upper triangular matrix with nonzero determinant. Although we will only work with perturbations whose leading matrices satisfy the condition \eqref{CondicionesMatricesLideresFinales}, if subsequently the weight matrix is multiplied by a matrix with a nonzero determinant, orthogonality will still exist, and the newly perturbed polynomials will be a linear combination of the previous ones. 
	\end{Remark}
	\begin{Proposition} \label{Prop2}
		The determinant of a matrix polynomial whose leading and sub-leading matrices satisfy conditions \eqref{CondicionesMatricesLideresFinales} is a polynomial of degree $Np-r$.
	\end{Proposition}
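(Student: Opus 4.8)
The plan is to read off the degree of $\det R(x)$ column by column and then check that the top-degree coefficient does not vanish. First I would observe that, under the conditions \eqref{CondicionesMatricesLideresFinales}, the $p$ columns of $R(x)=\sum_{l=0}^N R_l x^l$ split into two groups according to their degree. The first $r$ columns of $R_N$ vanish, so the corresponding columns of $R(x)$ have degree at most $N-1$; since the lower block of $R_{N-1}$ in those columns is $I_{r\times r}$, their coefficient of $x^{N-1}$ is nonzero and they have degree exactly $N-1$. The last $p-r$ columns have leading coefficient equal to the columns of the identity block of $R_N$, hence degree exactly $N$. Because the determinant is multilinear in the columns, this already gives the upper bound
\[
\deg\det R(x)\le r(N-1)+(p-r)N = Np-r.
\]

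The key step is then to prove this bound is attained, i.e. that the coefficient of $x^{Np-r}$ in $\det R(x)$ is nonzero. Expanding $\det R(x)=\sum_\sigma \operatorname{sgn}(\sigma)\prod_j R_{\sigma(j),j}(x)$, every product term has degree at most $Np-r$, and the coefficient of $x^{Np-r}$ is obtained by taking the top-degree coefficient of each factor. Summing over $\sigma$ with signs, this coefficient equals $\det M$, where $M$ is the $p\times p$ matrix whose $j$-th column is the leading column vector of the $j$-th column of $R(x)$: the $j$-th column of $R_{N-1}$ for $j\le r$, and the $j$-th column of $R_N$ for $j>r$. Reading these off from \eqref{CondicionesMatricesLideresFinales}, I get
\[
M=\begin{bmatrix} \left[R^1_{N-1}\right]_{(p-r)\times r} & I_{(p-r)\times(p-r)} \\ I_{r\times r} & 0_{r\times(p-r)} \end{bmatrix}.
\]

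Finally I would evaluate $\det M$ by a block column swap. Moving the block of the last $p-r$ columns in front of the first $r$ columns turns $M$ into the block upper-triangular matrix $\begin{bmatrix} I_{p-r} & R^1_{N-1} \\ 0 & I_r\end{bmatrix}$, whose determinant is $1$, while the block swap contributes a sign $(-1)^{r(p-r)}$. Hence $\det M=(-1)^{r(p-r)}\ne 0$, so the coefficient of $x^{Np-r}$ is nonzero and $\deg\det R(x)=Np-r$ exactly. The only delicate point—the main obstacle—is the bookkeeping that identifies the top-degree coefficient of the determinant with $\det M$ and the correct reading of the block structure of $M$ from \eqref{CondicionesMatricesLideresFinales}; once $M$ is written down, the nonvanishing of $\det M$ follows immediately from the triangular reduction. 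I would also note that this result sharpens Proposition \ref{SmithForm}, fixing the value of the deficiency $r$ there to precisely the $r$ appearing in the leading and sub-leading structure.
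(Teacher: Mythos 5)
Your proof is correct, and it reaches the paper's conclusion by a slightly different and in fact tighter route. The paper also expands $\det R(x)$ by the Leibniz formula, but it proceeds by exhibiting a single distinguished permutation $\tilde{\sigma}$ (namely $i\mapsto r+i$ for $i\le p-r$ and $i\mapsto i-(p-r)$ otherwise) that threads the two identity blocks of \eqref{CondicionesMatricesLideresFinales}, contributing $\sgn(\tilde{\sigma})(x^N)^{p-r}(x^{N-1})^{r}$, and then simply asserts that any other permutation either gives zero or gives terms of lower degree. Your column-multilinearity bookkeeping replaces that assertion with a proof: since the $j$-th column of $R(x)$ has degree at most $N-1$ for $j\le r$ and at most $N$ for $j>r$, and $Np-r$ is the sum of these column bounds, the coefficient of $x^{Np-r}$ is forced to be the single constant determinant
\[
\det M=\det\begin{bmatrix} \left[R^1_{N-1}\right]_{(p-r)\times r} & I_{p-r} \\ I_{r} & 0_{r\times(p-r)}\end{bmatrix}=(-1)^{r(p-r)},
\]
which handles all permutations at once; indeed, expanding $\det M$ by permutations, the unique nonzero term is exactly the paper's $\tilde{\sigma}$, so the two arguments agree at the core. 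Your version also records the sign of the leading coefficient, $(-1)^{r(p-r)}=\sgn(\tilde{\sigma})$, which the paper's displayed computation silently drops when it writes the leading contribution as $x^{Np-r}$; for the degree statement this is immaterial, but your accounting is the more careful one, and your closing observation relating this $r$ to the deficiency in the Smith form (Proposition \ref{SmithForm}) is a harmless and accurate aside. In short: same underlying computation, but your organization via $\det M$ closes the one step the paper leaves implicit.
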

	\begin{proof}
		Let us expand the determinant:
		\begin{align*}
			\det  R(x) & =  \begin{vmatrix}
				\left[ R^1_{N-1} \right]x^{N-1}+O(x^{N-2}) & x^{N}I_{(p-r)} + O(x^{N-1}) \\
				\\
				x^{N-1}I_{r} + O(x^{N-2}) & \left[ R^3_{N-1} \right]x^{N-1}+O(x^{N-2})
			\end{vmatrix} \\
			& = \sum_{\sigma \in \mathcal{S}_p} \text{sgn}(\sigma) R_{1\sigma(1)}R_{2\sigma(2)} \cdots R_{p\sigma(p)} \\&
				= \text{sgn}(\Tilde{\sigma}) R_{1\Tilde{\sigma}(1)}R_{2\Tilde{\sigma}(2)} \cdots R_{(p-r)\Tilde{\sigma}(p-r)} \cdots R_{p\Tilde{\sigma}(p)} 
			+ \sum_{\sigma \neq \Tilde{\sigma}} \text{sgn}(\sigma) R_{1\sigma(1)}R_{2\sigma(2)} \cdots R_{p\sigma(p)},
		\end{align*}
		where the permutation $\Tilde{\sigma}(i)$ is such that $i \rightarrow r+i$ for $i \leq (p-r)$ and $i \rightarrow i-(p-r)$ for $i \geq (p-r)+1$. This term in the determinant expansion gives a contribution of the form:
		\begin{multline*}
			\sgn(\Tilde{\sigma}) R_{1\Tilde{\sigma}(1)}R_{2\Tilde{\sigma}(2)} \cdots R_{(p-r)\Tilde{\sigma}(p-r)} \cdots R_{p\Tilde{\sigma}(p)} \\\begin{aligned}
				&= \sgn(\Tilde{\sigma}) R_{1,(r+1)}R_{2,(r+2)} \cdots R_{(p-r),p} R_{(p-r+1),1} \cdots R_{pr} =
		\sgn(\Tilde{\sigma}) (x^N)^{(p-r)}(x^{N-1})^{r} = x^{Np-r}.
			\end{aligned}
		\end{multline*} Any other permutation either gives zero or gives terms of lower degree.
	\end{proof}
\begin{Proposition}
		$R\left( \Lambda_{[p]}^\top \right)$ is banded lower triangular matrix that from the $Np-r$ subdiagonal is populated with zeros.
\end{Proposition}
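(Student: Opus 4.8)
The plan is to make the substitution $R(\Lambda_{[p]}^\top)$ completely explicit as a semi-infinite array and then simply read off the band from the positions of its nonzero scalar entries. First I would record the shape of the substituted matrix. Since $(\Lambda_{[p]}^\top)^l$ carries identity blocks $I_p$ exactly on its $l$-th block subdiagonal, the substitution $R(\Lambda_{[p]}^\top)=\sum_{l=0}^N R_l(\Lambda_{[p]}^\top)^l$, with each coefficient $R_l$ acting blockwise, produces the block lower-triangular, block-Toeplitz matrix whose $(i,j)$ block equals $R_{i-j}$, nonzero only when $0\le i-j\le N$. This already delivers two of the three claims at once: the matrix is block lower triangular, and, because only the block subdiagonals $0,\dots,N$ occur, it is banded. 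The upper bandwidth is at most $p-1$, coming from the off-diagonal part of $R_0$ inside the diagonal blocks, so ``lower triangular'' is understood in the block sense.

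Next I would convert block positions into scalar subdiagonal indices. An entry $[R_l]_{a,b}$ with row offset $a$ and column offset $b$ in $\{0,\dots,p-1\}$, sitting in a block on the $l$-th block subdiagonal, occupies the scalar position whose subdiagonal index is $lp+(a-b)$. Hence the lowest occupied scalar subdiagonal is obtained by maximizing $lp+(a-b)$ over all $l$ and all $(a,b)$ with $[R_l]_{a,b}\neq 0$, and the assertion reduces to showing that this maximum equals $Np-r$.

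I would then carry out this maximization using \eqref{CondicionesMatricesLideresFinales}. For the lower-order blocks $l\le N-2$ no hypothesis is needed: the crude bound $a-b\le p-1$ gives subdiagonal index at most $(N-2)p+(p-1)=Np-p-1$, which is strictly below $Np-r$ because $r\le p-1$. For $l=N$ the only nonzero entries of $R_N$ are those of the block $I_{p-r}$ in its upper-right corner, all of which satisfy $a-b=-r$ and therefore land exactly on subdiagonal $Np-r$. For $l=N-1$ the deepest entries come from the block $I_r$ in the lower-left corner of $R_{N-1}$, for which $a-b=p-r$, so the subdiagonal index is again $(N-1)p+(p-r)=Np-r$; every other nonzero entry of $R_{N-1}$, namely those in $R^1_{N-1},R^2_{N-1},R^4_{N-1}$, yields a strictly smaller index. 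Collecting the three cases shows that no nonzero entry lies below subdiagonal $Np-r$ while that subdiagonal is actually attained, which is precisely the claim.

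The step I expect to be the main obstacle is exactly the analysis of $R_N$ and $R_{N-1}$. For generic leading and subleading coefficients the bottom-left corner of those blocks would push nonzero entries down to subdiagonal $Np$ or $Np-1$; it is precisely the placement of the identity blocks prescribed by \eqref{CondicionesMatricesLideresFinales} --- the $I_{p-r}$ in the upper-right of $R_N$ together with the vanishing of its two bottom blocks, and the $I_r$ in the lower-left of $R_{N-1}$ --- that confines the deepest nonzero entries to the single subdiagonal $Np-r$. The delicate point is to verify that none of the free entries in $R_{N-1}$ can reach below $Np-r$; this holds because each such entry sits either in the upper $p-r$ rows or in the right $p-r$ columns and hence has $a-b\le p-r-1$.
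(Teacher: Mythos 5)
Your proposal is correct and follows essentially the same route as the paper: both write $R\left(\Lambda_{[p]}^\top\right)$ as the block lower-triangular block-Toeplitz matrix with $(i,j)$ block $R_{i-j}$ and then locate the deepest nonzero scalar subdiagonal using the placement of $I_{p-r}$ in $R_N$ and $I_r$ in $R_{N-1}$ prescribed by condition \eqref{CondicionesMatricesLideresFinales}, arriving at the count $(N-1)p+(p-r)=Np-r$. Your version is in fact somewhat more complete than the paper's, since you explicitly bound the contributions of the blocks $R_l$ with $l\le N-2$ and of the non-identity blocks of $R_{N-1}$, which the paper leaves implicit.
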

	\begin{proof}
		We have
		\begin{equation*}
			R\left( \Lambda_{[p]}^\top \right) = \left[\begin{NiceMatrix}
				R_0 & 0_p & 0_p & \Cdots \\
				R_1 & R_0 & 0_p & \Cdots \\
				\Vdots & & \Ddots & \\
				R_N & R_{N-1} & \Cdots & R_0 & \Cdots \\
				0_p & R_N & R_{N-1} & \Cdots & R_0 & \Cdots\\
				\Vdots & \Ddots[shorten-end=-10pt] &  \Ddots[shorten-end=-10pt] &  \Ddots[shorten-end=-25pt] &  & \Ddots
			\end{NiceMatrix} \right]
		\end{equation*}
		The block 
		\begin{multline*}
			\begin{bmatrix}
				R_{N-1} & R_{N-2} \\
				R_N & R_{N-1}  
			\end{bmatrix} \\= \begin{bmatrix}
				\left[ R^1_{N-1} \right]_{(p-r)\times r} & & \left[ R^2_{N-1} \right]_{(p-r)\times (p-r)} & & &  \\ 
				& & & & R_{N-2} & \\
				I_{r\times r} & & \left[ R^4_{N-1} \right]_{r \times (p-r)} &  &  & \\ \\
				0_{(p-r)\times r} & & I_{(p-r)} & \left[ R^1_{N-1} \right]_{(p-r)\times r} & & \left[ R^2_{N-1} \right]_{(p-r)\times (p-r)} \\ \\
				0_{r\times r} & & 0_{r \times (p-r)} & I_{r\times r} & & \left[ R^4_{N-1} \right]_{r \times (p-r)}
			\end{bmatrix}
		\end{multline*} has $p-r$ subdiagonals. Up to the matrix $R_{N-1}$, there will be $M-1$ matrices, which sum up to a total of $Np-r$ subdiagonals.
	\end{proof}
	\section{The Christoffel perturbation by examples}
	Let's illustrate an novel family of examples  that do not fit in the cases discussed in \cite{bfm}, which will serve as a guide for further generalization. We will consider multiple orthogonality, that is, $q=1$, and specifically, $p=3$. Let's start by studying the perturbation matrix:
	\begin{equation*}
		R(x)= \begin{bmatrix}
			\frac{b^2}{4} & x & 0 \\
			0 & \frac{b^2}{4} & x \\
			1 & b & \frac{b^2}{4}
		\end{bmatrix} = \begin{bmatrix}
			0 & 1 & 0 \\
			0 & 0 & 1 \\
			0 & 0 & 0 
		\end{bmatrix}x + \begin{bmatrix}
			\frac{b^2}{4} & 0 & 0 \\
			0 & \frac{b^2}{4} & 0 \\
			1 & b & \frac{b^2}{4}
		\end{bmatrix}=R_1x+R_0
	\end{equation*}
	with determinant given by:
	\begin{equation*}
		\det R(x) = x^2 - \frac{b^3}{4}x+\frac{b^6}{64}.
	\end{equation*}
	This polynomial has a double root, $x_0=\frac{b^3}{8}$. One can verify that the vectors $\boldsymbol{v}_1=\begin{bmatrix} -\frac{b}{2} & -\frac{b^2}{4} & \frac{b^3}{8}\end{bmatrix}$ and $\boldsymbol{v}_2=\begin{bmatrix} 0 & 1 & 0 \end{bmatrix}$ satisfy the following relations: 
	\begin{equation*}
		\boldsymbol{v}_1R\left(\frac{b^3}{8}\right) = 0 \: \: \text{and} \: \: \boldsymbol{v}_1R'\left(\frac{b^3}{8}\right) + \boldsymbol{v}_2R\left(\frac{b^3}{8}\right) = 0,
	\end{equation*}
	meaning, these two vectors form a canonical set of Jordan chains for $R(x)$.
	
	Let us consider  multiple orthogonal polynomials of type I and II, which satisfy the orthogonality relations with respect to a vector of measures    $\boldsymbol{w}(x)\d x$,  perturb this vector by the matrix polynomial $R(x)$ and assume that the new family of perturbed polynomials exists.
	Then, it is clear that
	\begin{align*}
		R(x)X_{[3]}^\top(x) & = \left( R_1x+R_0 \right)  \left[\begin{NiceMatrix} I_3 & x I_3 & \Cdots \end{NiceMatrix}\right] \\&= X_{[3]}^\top(x) \left(R_1 (\Lambda_{[3]}^1)^\top + R_0 (\Lambda_{[3]}^0)^\top \right)\\& = X_{[3]}^\top(x) R\Big(\Lambda_{[3]}^\top\Big), \
	\end{align*}
	so that, the perturbed moment matrix will be
	\begin{align*}
		\hat{\mathcal{M}} & = \int X_{[1]}(x)\boldsymbol{w}(x)R(x)X_{[3]}^\top(x)\d x \\&= 
		\int X_{[1]}(x)\boldsymbol{w}(x)X_{[3]}^\top(x)\:dx R(\Lambda_{[3]}^\top) \\& = \mathcal{M}R\Big(\Lambda_{[3]}^\top\Big),
	\end{align*}
	where it is understood that, given a $3 \times 3$ matrix:
	\begin{equation*}
		R\cdot\left[\begin{NiceMatrix} I_3 & x I_3 & \Cdots \end{NiceMatrix}\right] = 
	\left[	\begin{NiceMatrix} R & x R & \Cdots \end{NiceMatrix} \right]= \left[\begin{NiceMatrix} I_3 & x I_3 & \Cdots \end{NiceMatrix}\right]\cdot R .
	\end{equation*}
	Here $\C^{3\times \infty}[x]$ is considered a $\C^{3\times 3}[x]$ bimodule. 
	With all this, and assuming that Gauss--Borel factorization \eqref{eq:Gauss-Borel} exists for both moment matrices, we have:
	\begin{align*}
		\hat{\mathcal{M}} & = \hat{S}^{-1}\hat{H}\hat{\Bar{S}}^{-\top}, \\
		\mathcal{M} & = S^{-1}H\Bar{S}^{-\top}, \\
		\Omega & \coloneq  S\hat{S}^{-1} = H\Bar{S}^{-\top}R\Big(\Lambda_{[3]}^\top\Big)\hat{\Bar{S}}^\top\hat{H}^{-1} .
	\end{align*}
	Due to $S$, $\hat{S}$, $\Bar{S}$, and $\hat{\Bar{S}}$ being lower unitriangular matrices and due to the band structure of $R\Big(\Lambda_{[3]}^\top\Big)$, we deduce that $\Omega$ is of the form: 
	\begin{equation*}
		\Omega =\left[ \begin{NiceMatrix}
			1 & 0 & \Cdots  &&&\\
			\Omega_{1,0} & 1 & 0 & \Cdots&& &\\
			\Omega_{2,0} & \Omega_{2,1} & 1 & 0 & \Cdots&\\
			0 & \Omega_{3,1} & \Omega_{3,2} & 1 &&\\
			\Vdots[shorten-end=-10pt] & \Ddots[shorten-end=-23pt]&\Ddots[shorten-end=-30pt] & \Ddots[shorten-end=-30pt]& \Ddots[shorten-end=-5pt]&\\
			\phantom{i}&	\phantom{i}&	\phantom{i}&	\phantom{i}&	\phantom{i}&
		\end{NiceMatrix}\right].
	\end{equation*}
	We have the following connection formulas:
\begin{Proposition}
		The original and perturbed polynomials are connected trough the formulas:
	\begin{align} 
		\label{ConexAE1}    A(x) \Omega  & = R(x)\hat{A}(x), \\
		\label{ConexBE1}   \Omega \hat{B}(x) & = B(x) .
	\end{align}
\end{Proposition}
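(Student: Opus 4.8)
The plan is to verify both identities by direct substitution, exploiting the two equivalent expressions for $\Omega$ recorded just above the statement, namely $\Omega = S\hat{S}^{-1}$ and $\Omega = H\bar{S}^{-\top}R(\Lambda_{[3]}^\top)\hat{\bar{S}}^\top\hat{H}^{-1}$, together with the monic normalizations $B(x)=SX_{[1]}(x)$, $A(x)=X_{[3]}^\top(x)\bar{S}^\top H^{-1}$ and their hatted analogues. The second formula additionally requires the intertwining relation $R(x)X_{[3]}^\top(x) = X_{[3]}^\top(x)R(\Lambda_{[3]}^\top)$ derived earlier in this section. In fact, there is no genuine obstacle here: the whole content is choosing, for each formula, which of the two faces of $\Omega$ to insert, and then letting the triangular factors telescope.

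For \eqref{ConexBE1} I would use the first expression $\Omega = S\hat{S}^{-1}$ and the perturbed normalization $\hat{B}(x)=\hat{S}X_{[1]}(x)$. Substituting directly gives
\[
\Omega\hat{B}(x) = S\hat{S}^{-1}\hat{S}X_{[1]}(x) = SX_{[1]}(x) = B(x),
\]
which is precisely the claim; the $\hat{S}^{-1}\hat{S}$ cancellation is all that is needed.

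For \eqref{ConexAE1} I would instead use the second face $\Omega = H\bar{S}^{-\top}R(\Lambda_{[3]}^\top)\hat{\bar{S}}^\top\hat{H}^{-1}$ together with $A(x)=X_{[3]}^\top(x)\bar{S}^\top H^{-1}$. Multiplying, the factors $H^{-1}H$ and $\bar{S}^\top\bar{S}^{-\top}$ telescope and one is left with
\[
A(x)\Omega = X_{[3]}^\top(x)\,R(\Lambda_{[3]}^\top)\,\hat{\bar{S}}^\top\hat{H}^{-1}.
\]
At this point the only step that is not pure cancellation is to invoke the intertwining identity in the form $X_{[3]}^\top(x)R(\Lambda_{[3]}^\top) = R(x)X_{[3]}^\top(x)$, which turns the right-hand side into $R(x)X_{[3]}^\top(x)\hat{\bar{S}}^\top\hat{H}^{-1} = R(x)\hat{A}(x)$, using the hatted normalization of $\hat{A}$. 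The step I would flag as the one to state carefully is this application of the intertwining relation, since it is what transfers the action of the perturbation from the semi-infinite shift side (where $\Omega$ was defined) back to the polynomial variable $x$; everything else is bookkeeping with unitriangular and diagonal factors. With both displays established, the proposition is proved.
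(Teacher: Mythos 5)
Your proposal is correct and coincides with the paper's own proof: formula \eqref{ConexBE1} via $\Omega\hat{B}(x)=S\hat{S}^{-1}\hat{S}X_{[1]}(x)=B(x)$, and formula \eqref{ConexAE1} via the second face of $\Omega$, telescoping $H^{-1}H$ and $\bar{S}^\top\bar{S}^{-\top}$, then invoking the intertwining identity $X_{[3]}^\top(x)R(\Lambda_{[3]}^\top)=R(x)X_{[3]}^\top(x)$. The only difference is presentational — you correctly flag the intertwining step as the sole non-trivial ingredient, which the paper applies silently (with a typographical slip, writing $P(\Lambda)$ where $R$ is meant).
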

	\begin{proof}
		It follows from:
		\begin{align*} 
			A(x)\Omega & = (\Bar{S}X_{[3]}(x))^\top H^{-1} \cdot H \Bar{S}^{-\top} R(\Lambda_{[3]}^\top) \hat{\Bar{S}}^\top \hat{H}^{-1} = (P(\Lambda) X_{[3]}(x))^\top \hat{\Bar{S}}^\top \hat{H}^{-1} = R(x)\hat{A}(x), \\
			\Omega \hat{B}(x) & =S\hat{S}^{-1} \hat{S} X_{[1]}(x) = B(x).
		\end{align*}
	\end{proof}
	Entrywise, Equation \eqref{ConexAE1} takes the form: 
	\begin{equation} \label{CompConexAE1}
		\begin{bmatrix}
			A_n^{(1)}(x) \\[2pt]
			A_n^{(2)}(x) \\[2pt]
			A_n^{(3)}(x) 
		\end{bmatrix}+\begin{bmatrix}
			A_{n+1}^{(1)}(x) \\[2pt]
			A_{n+1}^{(2)}(x) \\[2pt]
			A_{n+1}^{(3)}(x) 
		\end{bmatrix}\Omega_{n+1,n}+\begin{bmatrix}
			A_{n+2}^{(1)}(x) \\[2pt]
			A_{n+2}^{(2)}(x) \\[2pt]
			A_{n+2}^{(3)}(x) 
		\end{bmatrix}\Omega_{n+2,n} = W(x)\begin{bmatrix}
			\hat{A}_n^{(1)}(x) \\[2pt]
			\hat{A}_n^{(2)}(x) \\[2pt]
			\hat{A}_n^{(3)}(x) 
		\end{bmatrix}.
	\end{equation}
	Applying the Jordan chain vectors: 
	\begin{align*}
		\boldsymbol{v}_1A\left(\frac{b^3}{8}\right)\Omega & = \boldsymbol{v}_1R\left(\frac{b^3}{8}\right)\hat{A}\left(\frac{b^3}{8}\right) = 0, \\
		A'(x)\Omega & = R'(x)\hat{A}(x)+R(x)\hat{A}'(x)\: \: \boldsymbol{v}_1A'\left(\frac{b^3}{8}\right)\Omega = \boldsymbol{v}_1R'\left(\frac{b^3}{8}\right)\hat{A}\left(\frac{b^3}{8}\right), \\
		\left(\boldsymbol{v}_1A'\left(\frac{b^3}{8}\right) + \boldsymbol{v}_2A\left(\frac{b^3}{8}\right) \right)\Omega & = \left(\boldsymbol{v}_1R'\left(\frac{b^3}{8}\right)+\boldsymbol{v}_2R\left(\frac{b^3}{8}\right)\right) \hat{A}\left(\frac{b^3}{8}\right) = 0.
	\end{align*}
	These vectors  allow us to solve for the entries $\Omega_{i+1,i}$ and $\Omega_{i+2,i}$. To do this, let's introduce the notation: 
	\begin{align*}
		\mathbb{A}_n^1 & = \left(\boldsymbol{v}_1A\left(\frac{b^3}{8}\right)\right)_{n+1} = \sum_{a=1}^{3}v_{1,a}A_n^{(a)}\left(\frac{b^3}{8}\right), \\
		\mathbb{A}_n^2 & = \left(\boldsymbol{v}_1A'\left(\frac{b^3}{8}\right)+\boldsymbol{v}_2A\left(\frac{b^3}{8}\right)\right)_{n+1} = \sum_{a=1}^{3} \left(v_{1,a}A_n'^{(a)}\left(\frac{b^3}{8}\right)+v_{2,a}A_n^{(a)}\left(\frac{b^3}{8}\right)\right).
	\end{align*} Now, we can apply the Jordan chain vectors to Equation \eqref{CompConexAE1}: 
	\begin{align*}
		 \begin{bmatrix}
			\mathbb{A}_n^1 \\[2pt]
			\mathbb{A}_n^2 
		\end{bmatrix}+\begin{bmatrix}
			\mathbb{A}_{n+1}^1 \\[2pt]
			\mathbb{A}_{n+1}^2 
		\end{bmatrix}\Omega_{n+1,n}+\begin{bmatrix}
			\mathbb{A}_{n+2}^1 \\[2pt]
			\mathbb{A}_{n+2}^2 
		\end{bmatrix}\Omega_{n+2,n}& = 0, \\[2pt]
		\begin{bmatrix}
			\mathbb{A}_n^1 \\
			\mathbb{A}_n^2
		\end{bmatrix} + \begin{bmatrix}
			\mathbb{A}_{n+1}^1 & \mathbb{A}_{n+2}^1 \\[2pt]
			\mathbb{A}_{n+1}^2 & \mathbb{A}_{n+2}^2
		\end{bmatrix} \begin{bmatrix}
			\Omega_{n+1,n} \\[2pt]
			\Omega_{n+2,n}
		\end{bmatrix} &= 0,  \\
		-  \begin{bmatrix}
			\mathbb{A}_{n+1}^1 & \mathbb{A}_{n+2}^1 \\[2pt]
			\mathbb{A}_{n+1}^2 & \mathbb{A}_{n+2}^2
		\end{bmatrix}^{-1}\begin{bmatrix}
			\mathbb{A}_n^1 \\[2pt]
			\mathbb{A}_n^2
		\end{bmatrix} &= \begin{bmatrix}
			\Omega_{n+1,n} \\[2pt]
			\Omega_{n+2,n}
		\end{bmatrix}.
	\end{align*}
	On the other hand, we can obtain connection formulas for the kernel polynomials, see Equation \eqref{CDkernel}. For $n \geq 2$: 
	\begin{equation*}
		R(x) \hat{K}^{[n]}(x,y) = K^{[n]}(x,y) + \begin{bmatrix}
			A_n^{(1)}(x) & A_{n+1}^{(1)}(x) \\[3pt]
			A_n^{(2)}(x) & A_{n+1}^{(2)}(x) \\[3pt]
			A_n^{(3)}(x) & A_{n+1}^{(3)}(x) 
		\end{bmatrix}  \begin{bmatrix}
			\Omega_{n,n-1} & \Omega_{n,n-2} \\
			\Omega_{n+1,n-1} & 0 
		\end{bmatrix}  \begin{bmatrix}
			\hat{B}_{n-1}(y) \\
			\hat{B}_{n-2}(y)
		\end{bmatrix}. 
	\end{equation*}
    For $n=1$: 
    \begin{equation*}
        R(x) \hat{K}^{[1]}(x,y) = K^{[1]}(x,y) + \begin{bmatrix}
			A_1^{(1)}(x) & A_{2}^{(1)}(x) \\[3pt]
			A_1^{(2)}(x) & A_{2}^{(2)}(x) \\[3pt]
			A_1^{(3)}(x) & A_{2}^{(3)}(x) 
		\end{bmatrix}  \begin{bmatrix}
			\Omega_{1,0}\\
			\Omega_{2,0} 
		\end{bmatrix} B_0(y).
    \end{equation*}
	Let's introduce a notation of the form:
	\begin{align*}
		\mathbb{K}^{[n]}(1,y) & = \sum_{a=1}^{3}v_{1,a}K^{[n]}_{a}\left(\frac{b^3}{8}\right), \\
		\mathbb{K}^{[n]}(2,y) & = \sum_{a=1}^{3} \left(v_{1,a}K^{[n]'}_{a}\left(\frac{b^3}{8}\right)+v_{2,a}K^{[n]}_{a}\left(\frac{b^3}{8}\right)\right).
	\end{align*}
	So, acting with the different vectors and evaluating at $x=\frac{b^3}{8}$, this last equation reads: 
	\begin{equation*}
		\begin{bmatrix}
			\mathbb{K}^{[n]}(1,y) \\
			\mathbb{K}^{[n]}(2,y)
		\end{bmatrix} + \begin{bmatrix}
			\mathbb{A}_{n}^1 & \mathbb{A}_{n+1}^1 \\[3pt]
			\mathbb{A}_{n}^2 & \mathbb{A}_{n+1}^2
		\end{bmatrix}\begin{bmatrix}
			\Omega_{n,n-1} & \Omega_{n,n-2} \\
			\Omega_{n+1,n-1} & 0 
		\end{bmatrix}  \begin{bmatrix}
			\hat{B}_{n-1}(y) \\
			\hat{B}_{n-2}(y)
		\end{bmatrix} = 0.
	\end{equation*}
	With all this, and in terms of the $\tau$ determinants:
	\[
	\begin{aligned}
		\tau_n & \coloneq \begin{vmatrix}
			\mathbb{A}_{n}^1 & \mathbb{A}_{n+1}^1 \\[3pt]
			\mathbb{A}_{n}^2 & \mathbb{A}_{n+1}^2   
		\end{vmatrix}, & \tau_{n-1}^{(1)}& \coloneq  \begin{vmatrix}
			\mathbb{A}_{n-1}^1 & \mathbb{A}_{n+1}^1 \\[3pt]
			\mathbb{A}_{n-1}^2 & \mathbb{A}_{n+1}^2  
		\end{vmatrix},
	\end{aligned}
	\]
	we can now obtain:

\begin{Proposition}
Explicit formulas for $\hat{A}(x)$ and $\hat{B}(x)$ are:
		\begin{align*}
		\Omega_{n+1,n-1} & = - \frac{\tau_{n-1}}{\tau_n}, \quad  \Omega_{n,n-1} = \frac{\tau_{n-1}^{(1)}}{\tau_n}, & 
	\end{align*}
	\begin{align*}
		\hat{B}_{n-1}(y) & = \frac{\begin{vmatrix}
				\mathbb{A}_{n}^1 & \mathbb{K}^{[n]}(1,y) \\[3pt]
				\mathbb{A}_{n}^2 & \mathbb{K}^{[n]}(2,y)
		\end{vmatrix}}{\tau_{n-1}}, &
		\left[R(x)\hat{A}(x)\right]_{n+1,a} & = \frac{1}{\tau_n}\begin{vmatrix}
			A_{n}^{(a)}(x) & A_{n+1}^{(a)}(x) & A_{n+2}^{(a)}(x) \\[3pt]
			\mathbb{A}_n^1 & \mathbb{A}_{n+1}^1 & \mathbb{A}_{n+2}^1 \\[3pt]
			\mathbb{A}_n^2 & \mathbb{A}_{n+1}^2 & \mathbb{A}_{n+2}^2
		\end{vmatrix}.
	\end{align*}
\end{Proposition}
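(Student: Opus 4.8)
The plan is to extract all four identities from the two connection formulas \eqref{ConexAE1} and \eqref{ConexBE1} by contracting them against the canonical set of Jordan chains $\{\boldsymbol{v}_1,\boldsymbol{v}_2\}$ at the double eigenvalue $x_0=\tfrac{b^3}{8}$, after which each claim reduces to solving a small linear system by Cramer's rule. Throughout I use the two Jordan-chain contractions: left-multiplication by $\boldsymbol{v}_1$ evaluated at $x_0$, and the combination $\boldsymbol{v}_1(\,\cdot\,)'+\boldsymbol{v}_2(\,\cdot\,)$ evaluated at $x_0$. The point is that both annihilate $R(x)$ at $x_0$, since $\boldsymbol{v}_1R(x_0)=0$ and $\boldsymbol{v}_1R'(x_0)+\boldsymbol{v}_2R(x_0)=0$.

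For the entries of $\Omega$, I would take the entrywise identity \eqref{CompConexAE1} in its column-$(n-1)$ instance, apply the two contractions, and evaluate at $x_0$. The right-hand side $R(x)\hat A(x)$ is killed, and in the abbreviations $\mathbb{A}^1_\bullet,\mathbb{A}^2_\bullet$ one is left with
\[
\begin{bmatrix}\mathbb{A}^1_n&\mathbb{A}^1_{n+1}\\ \mathbb{A}^2_n&\mathbb{A}^2_{n+1}\end{bmatrix}\begin{bmatrix}\Omega_{n,n-1}\\ \Omega_{n+1,n-1}\end{bmatrix}=-\begin{bmatrix}\mathbb{A}^1_{n-1}\\ \mathbb{A}^2_{n-1}\end{bmatrix}.
\]
The coefficient determinant is exactly $\tau_n$, so Cramer's rule gives $\Omega_{n,n-1}$ and $\Omega_{n+1,n-1}$ as quotients of $2\times2$ minors over $\tau_n$; after matching column orderings to the definitions of $\tau_{n-1}$ and $\tau_{n-1}^{(1)}$ these are the stated $\tfrac{\tau_{n-1}^{(1)}}{\tau_n}$ and $-\tfrac{\tau_{n-1}}{\tau_n}$.

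For $\hat B_{n-1}(y)$ I would start from the kernel connection formula for $n\geq2$, contract on the left with the same two covectors, and evaluate at $x_0$; as its left member is $R(x)\hat K^{[n]}(x,y)$ it is annihilated, leaving
\[
\begin{bmatrix}\mathbb{K}^{[n]}(1,y)\\ \mathbb{K}^{[n]}(2,y)\end{bmatrix}+\begin{bmatrix}\mathbb{A}^1_n&\mathbb{A}^1_{n+1}\\ \mathbb{A}^2_n&\mathbb{A}^2_{n+1}\end{bmatrix}\begin{bmatrix}\Omega_{n,n-1}&\Omega_{n,n-2}\\ \Omega_{n+1,n-1}&0\end{bmatrix}\begin{bmatrix}\hat B_{n-1}(y)\\ \hat B_{n-2}(y)\end{bmatrix}=0.
\]
The decisive simplification is that the relations just derived collapse the first column of the matrix product: the column-$(n-1)$ system reads $\mathbb{A}^i_n\Omega_{n,n-1}+\mathbb{A}^i_{n+1}\Omega_{n+1,n-1}=-\mathbb{A}^i_{n-1}$ for $i=1,2$, so that column becomes $-(\mathbb{A}^1_{n-1},\mathbb{A}^2_{n-1})^{\top}$. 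I then eliminate the still-unknown combination $\Omega_{n,n-2}\hat B_{n-2}(y)$ between the two scalar equations (cross-multiplying by $\mathbb{A}^2_n$ and $\mathbb{A}^1_n$ and subtracting); those terms cancel, the coefficient of $\hat B_{n-1}(y)$ is $\pm(\mathbb{A}^1_{n-1}\mathbb{A}^2_n-\mathbb{A}^1_n\mathbb{A}^2_{n-1})=\pm\tau_{n-1}$, and the surviving kernel combination is exactly the displayed $2\times2$ determinant, yielding the formula. For $R(x)\hat A(x)$ no contraction is needed: the entrywise identity gives $[R(x)\hat A(x)]^{(a)}=A^{(a)}_n(x)+A^{(a)}_{n+1}(x)\Omega_{n+1,n}+A^{(a)}_{n+2}(x)\Omega_{n+2,n}$, and substituting the Cramer expressions for $\Omega_{n+1,n},\Omega_{n+2,n}$ from the column-$n$ instance of the system above and clearing the common denominator reassembles the three terms into the first-row cofactor expansion of the displayed $3\times3$ determinant.

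I expect the main obstacle to be organizational rather than conceptual: keeping the two shifted instances of the same $2\times2$ system apart (column $n$ versus column $n-1$), and identifying each $2\times2$ or $3\times3$ minor with the correctly indexed $\tau$ up to a sign fixed by the column conventions in the definitions. The only genuinely non-mechanical step is the collapse of the first column in the kernel equation, which is precisely what makes $\hat B_{n-1}(y)$ expressible through $\tau_{n-1}$ alone instead of through the full $\Omega$ block.
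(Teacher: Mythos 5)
Your proposal is correct and is essentially the paper's own proof: both arguments contract the entrywise connection formula \eqref{CompConexAE1} and the kernel connection formula with the two Jordan-chain covectors $\boldsymbol{v}_1(\,\cdot\,)$ and $\boldsymbol{v}_1(\,\cdot\,)'+\boldsymbol{v}_2(\,\cdot\,)$ at $x_0=\tfrac{b^3}{8}$, solve the resulting $2\times 2$ systems (the paper by inverting and left-multiplying with $\begin{bmatrix} 0 & 1\end{bmatrix}$, you by Cramer's rule plus a cross-multiplication elimination that exploits the same structural zero in the $\Omega$ block and is algebraically equivalent), and reassemble the $3\times 3$ bordered determinant for $R(x)\hat{A}(x)$ by substituting the solved column of $\Omega$, exactly as in the paper. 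Two minor caveats: you omit the separate $n=1$ case, which the paper dispatches as analogous, and the sign bookkeeping you defer to "column conventions" in fact yields $\Omega_{n+1,n-1}=+\tau_{n-1}/\tau_n$ and $\Omega_{n,n-1}=-\tau_{n-1}^{(1)}/\tau_n$, consistent with Theorem \ref{Theorem:Christoffel_Formulas} at $M=2$ but opposite to the signs printed in this Proposition — a slip present in the paper's own computation as well, so it does not count against your argument.
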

	\begin{proof}
		It follows from:
		\begin{align*}
			-  \begin{bmatrix}
				\mathbb{A}_{n}^1 & \mathbb{A}_{n+1}^1 \\[3pt]
				\mathbb{A}_{n}^2 & \mathbb{A}_{n+1}^2
			\end{bmatrix}^{-1}
			\begin{bmatrix}
				\mathbb{K}^{[n]}(1,y) \\[3pt]
				\mathbb{K}^{[n]}(2,y)
			\end{bmatrix} &= \begin{bmatrix}
				\Omega_{n,n-1} & \Omega_{n,n-2} \\[3pt]
				\Omega_{n+1,n-1} & 0 
			\end{bmatrix}  \begin{bmatrix}
				\hat{B}_{n-1}(y) \\
				\hat{B}_{n-2}(y)
			\end{bmatrix}, \\
			- \begin{bmatrix} 0 & 1 \end{bmatrix}\begin{bmatrix}
				\mathbb{A}_{n}^1 & \mathbb{A}_{n+1}^1 \\[3pt]
				\mathbb{A}_{n}^2 & \mathbb{A}_{n+1}^2
			\end{bmatrix}^{-1}
			\begin{bmatrix}
				\mathbb{K}^{[n]}(1,y) \\
				\mathbb{K}^{[n]}(2,y)
			\end{bmatrix}&= \Omega_{n+1,n-1}\hat{B}_{n-1}(y).
		\end{align*}
		Furthermore, $\Omega_{n+1,n-1}$ is obtained from: 
		\begin{equation*}
			-\begin{bmatrix} 0 & 1 \end{bmatrix}\begin{bmatrix}
				\mathbb{A}_{n}^1 & \mathbb{A}_{n+1}^1 \\[3pt]
				\mathbb{A}_{n}^2 & \mathbb{A}_{n+1}^2
			\end{bmatrix}^{-1}\begin{bmatrix}
				\mathbb{A}_{n-1}^1 \\[3pt]
				\mathbb{A}_{n-1}^2
			\end{bmatrix} = \begin{bmatrix} 0 & 1 \end{bmatrix}
			\begin{bmatrix}
				\Omega_{n,n-1} \\
				\Omega_{n+1,n-1}
			\end{bmatrix} = \Omega_{n+1,n-1} = - \frac{\tau_{n-1}}{\tau_n}.
		\end{equation*}
		The last relation comes from the $a$-th entry of Equation \eqref{ConexAE1}: 
		\begin{align*}
			\left[R(x)\hat{A}(x)\right]_{n+1,a} & = \sum_{\Bar{a}=1}^p \left(R(x)\right)_{a,\Bar{a}}\hat{A}_n^{(\Bar{a})}(x) = A_n^{(a)}(x) + \Omega_{n+1,n}A_{n+1}^{(a)}(x) + \Omega_{n+2,n}A_{n+2}^{(a)}(x) \\&= A_n^{(a)}(x) + \begin{bmatrix}
				A_{n+1}^{(a)}(x) & A_{n+2}^{(a)}(x)
			\end{bmatrix} \begin{bmatrix}
				\Omega_{n+1,n} \\
				\Omega_{n+2,n}
			\end{bmatrix} \\
			& = A_n^{(a)}(x) - \begin{bmatrix}
				A_{n+1}^{(a)}(x) & A_{n+2}^{(a)}(x)
			\end{bmatrix} \begin{bmatrix}
				\mathbb{A}_{n+1}^1 & \mathbb{A}_{n+2}^1 \\[3pt]
				\mathbb{A}_{n+1}^2 & \mathbb{A}_{n+2}^2
			\end{bmatrix}^{-1}\begin{bmatrix}
				\mathbb{A}_{n}^1 \\
				\mathbb{A}_{n}^2
			\end{bmatrix}.
		\end{align*}
    The proof for the case $n=1$ is completely analogous and yields the same relation for $\hat{B}_0(y)$
	\end{proof}
\begin{Remark}
    For this simple case, one can obtain explicit formulas for the inverse matrix of $R(x)$. The components $\hat{A}_n^{(a)}(x)$ can be rewritten as follows:  
\begin{align*}
    \hat{A}_n^{(a)}(x) & = \frac{1}{(x^2-\frac{b^3}{4}x+\frac{b^6}{64}) \, \tau_n} \sum_{\Bar{a}=1}^3 \left( \emph{adj} \, R(x) \right)_{a,\Bar{a}}\begin{vmatrix}
			A_{n}^{(\Bar{a})}(x) & A_{n+1}^{(\Bar{a})}(x) & A_{n+2}^{(\Bar{a})}(x) \\[3pt]
			\mathbb{A}_n^1 & \mathbb{A}_{n+1}^1 & \mathbb{A}_{n+2}^1 \\[3pt]
			\mathbb{A}_n^2 & \mathbb{A}_{n+1}^2 & \mathbb{A}_{n+2}^2
		\end{vmatrix} \\
  \text{with} \quad & \emph{adj} \, R(x) = \begin{bmatrix}
        \frac{b^4}{16}-bx & x & -\frac{b^2}{4} \\[3pt]
        -\frac{b^2}{4}x & \frac{b^4}{16} & x-\frac{b^3}{4} \\[3pt]
        x^2 & -\frac{b^2}{4}x & \frac{b^4}{16}
    \end{bmatrix}
\end{align*}
In Section \S \ref{S:Criteria}, it will be shown that a construction of the form $R(x)\hat{A}(x)=A(x)\Omega$ is divisible by $R(x)$, so the decomposition performed is valid. 
\end{Remark}
\begin{Remark}
	The name $\tau$ determinants is motivated because its close relations with  corresponding $\tau$ functions of  associated multiple Toda lattices. We will see later that, provided $\tau_n$ is non-zero, $n\in\N_0$, the new perturbed family of orthogonal polynomials exists.
	\end{Remark}

	\section{General Polynomial Perturbation}\label{Section: General}
	Given a $q \times p$  matrix of measures $\d\mu$ for the original mixed multiple orthogonal polynomials,  let us consider a right multiplication of the form:
	\begin{equation*}
		\d\hat{\mu}(x)= \d\mu(x)  R(x),
	\end{equation*}
	where $R(x)$ is a matrix polynomial that satisfies  \eqref{CondicionesMatricesLideresFinales}. In particular, we will have a matrix polynomial of degree $N$ whose leading matrix has rank $p-r$. Initially, we assume that, given this perturbation, there exists a new family of orthogonal polynomials. With this, 
	
	\begin{Proposition}
	The relationship between the moment matrices is given by:
	\begin{equation}
		\label{MatricesMomentosMP}
		\hat{\mathcal{M}} = \mathcal{M}R(\Lambda^\top).
	\end{equation}
	\end{Proposition}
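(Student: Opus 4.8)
The plan is to reduce the identity \eqref{MatricesMomentosMP} to a single algebraic fact: that left multiplication of the block row $X_{[p]}^\top(x)$ by the matrix polynomial $R(x)$ coincides with right multiplication by the block Toeplitz operator $R(\Lambda_{[p]}^\top)$, namely
\[ R(x)\, X_{[p]}^\top(x) = X_{[p]}^\top(x)\, R\bigl(\Lambda_{[p]}^\top\bigr). \]
This is precisely the bimodule statement used in the worked example, where $\mathbb{C}^{p\times\infty}[x]$ is viewed as a $\mathbb{C}^{p\times p}[x]$-bimodule, and once it is in hand the Proposition follows by substitution together with pulling a constant matrix out of the integral.

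First I would insert the definition of the perturbed matrix of measures, $\d\hat\mu(x)=\d\mu(x)R(x)$, into the defining integral, writing
\[ \hat{\mathcal{M}} = \int_\Delta X_{[q]}(x)\,\d\hat\mu(x)\,X_{[p]}^\top(x) = \int_\Delta X_{[q]}(x)\,\d\mu(x)\,R(x)\,X_{[p]}^\top(x). \]
The block sizes are consistent: $X_{[q]}(x)$ contributes $q$-blocks on the left, $\d\mu$ is $q\times p$, $R(x)$ is $p\times p$, and $X_{[p]}^\top(x)$ contributes $p$-blocks on the right, so $\hat{\mathcal{M}}$ carries the same $q\times p$ block shape as $\mathcal{M}$.

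The central step is to establish the displayed identity. Here I would use the eigenvector relation $\Lambda_{[p]}X_{[p]}(x)=xX_{[p]}(x)$, which upon transposing gives $X_{[p]}^\top(x)\,(\Lambda_{[p]}^\top)^l = x^l X_{[p]}^\top(x)$ for every $l$. Expanding $R(x)=\sum_{l=0}^N R_l x^l$ and computing the $j$-th block column of $X_{[p]}^\top(x)R(\Lambda_{[p]}^\top)$, where $R(\Lambda_{[p]}^\top)$ is the block lower-triangular Toeplitz matrix carrying $R_l$ on its $l$-th block subdiagonal (the operator whose banded structure was described in the preceding Proposition), one finds $\sum_{i\ge j}x^i R_{i-j}=x^j R(x)$, which is exactly the $j$-th block of $R(x)X_{[p]}^\top(x)=\bigl[R(x),\,xR(x),\,x^2R(x),\dots\bigr]$. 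The main obstacle is precisely this index bookkeeping: one must verify that the coefficients $R_l$ sit on the correct subdiagonals of $R(\Lambda_{[p]}^\top)$ so that the noncommutative matrix product reassembles into $\sum_l R_l x^l$ rather than some transposed or reversed arrangement. This is where the bimodule viewpoint does the essential work, since the matrices $R_l$ need not commute among themselves.

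Finally, substituting the identity back into the integral and observing that $R(\Lambda_{[p]}^\top)$ is independent of $x$, I would factor it to the right of the integral sign,
\[ \hat{\mathcal{M}} = \int_\Delta X_{[q]}(x)\,\d\mu(x)\,X_{[p]}^\top(x)\,R(\Lambda_{[p]}^\top) = \Bigl(\int_\Delta X_{[q]}(x)\,\d\mu(x)\,X_{[p]}^\top(x)\Bigr)R(\Lambda_{[p]}^\top)=\mathcal{M}\,R(\Lambda_{[p]}^\top), \]
recognizing the remaining integral as the unperturbed moment matrix $\mathcal{M}$. This completes the argument; the only point demanding genuine care is the blockwise verification of the key identity, with the extraction of the constant operator from the integral being routine.
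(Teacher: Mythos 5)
Your proposal is correct and follows essentially the same route as the paper: substitute $\d\hat\mu(x)=\d\mu(x)R(x)$ into the moment integral, apply the intertwining identity $R(x)\,X_{[p]}^\top(x)=X_{[p]}^\top(x)\,R\bigl(\Lambda_{[p]}^\top\bigr)$, and pull the constant operator $R\bigl(\Lambda_{[p]}^\top\bigr)$ outside the integral. The only difference is that the paper merely recalls this identity (having exhibited it in the $p=3$ example via the bimodule remark), while you verify it blockwise from $\Lambda_{[p]}X_{[p]}(x)=xX_{[p]}(x)$ --- a welcome but inessential elaboration.
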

	\begin{proof}
		Let's start by recalling that
		\begin{equation*}
			R(x)X_{[p]}^\top  = X_{[p]}^\top R\left( \Lambda_{[p]}^\top \right).
		\end{equation*}
		By the definition of moment matrices, we have that,
		\begin{align*}
			\hat{\mathcal{M}} & =  \int X_{[q]}(x) \d\hat{\mu}(x)X_{[p]}^\top(x) \\\
			& =  \int X_{[q]}(x)\d\mu (x)
			R(x)X_{[p]}^\top(x) = \int X_{[q]}(x) \d\mu(x)
			X_{[p]}^\top R\left( \Lambda_{[p]}^\top \right) \\
			& = \mathcal{M} R(\Lambda ^\top )
		\end{align*}
	\end{proof}
	Next, let's assume that there exists a Gauss--Borel factorization for each of the moment matrices, that is:
	\begin{align}
		\label{GaussBorelAMP}   \hat{\mathcal{M}} & = \hat{S}^{-1}\hat{H}\hat{\Bar{S}}^{-\top} \\
		\label{GaussBorelBMP}   \mathcal{M} & = S^{-1}H\Bar{S}^{-\top}.
	\end{align}
\begin{Definition}
	The connection matrix $\Omega$ is given by:
	\begin{align*}
		\Omega\coloneq  S\hat{S}^{-1}.
	\end{align*}
\end{Definition}
\begin{Proposition}
	The matrix  $\Omega$ can be defined alternatively as
		\begin{align*}
		\Omega = S\hat{S}^{-1} = H\Bar{S}^{-\top}R(\Lambda^\top)\hat{\Bar{S}}^\top\hat{H}^{-1}.
	\end{align*}
It is a banded lower  unitriangular matrix; i.e., it has $pN-r$ subdiagonals and the main diagonal filled with ones, and everything else zero. Hence, we have a banded matrix that can also be written as banded $p\times p$ block matrix:
	\begin{align*}
		\Omega &= \begin{bNiceMatrix}
			1 & 0 & \Cdots[shorten-end=7pt] \\
			\Omega_{1,0} & 1 & 0 & \Cdots[shorten-end=7pt]  \\
			\Omega_{2,0} & \Omega_{2,1} & 1 & 0 & \Cdots[shorten-end=7pt] \\
			\Vdots & \Vdots & & \Ddots & \Ddots[shorten-end=8pt]  \\
			\Omega_{pN-r,0} & \Omega_{pN-r,1} & \Cdots & \Omega_{pN-r,pN-r-1} & 1 &  &  \\
			0 & \Omega_{pN-r+1,1} &  &  & \Omega_{pN-r+1,pN-r} & \Ddots[shorten-end=8pt] \\
			\Vdots[shorten-end=1pt] & \Ddots[shorten-end=-25pt] & \Ddots[shorten-end=-45pt]  & \Ddots[shorten-end=-45pt]  &  & \Ddots[shorten-end=-5pt]  \\
		\end{bNiceMatrix} \\&=  \begin{bNiceMatrix}
			\left[\Omega_{0,0}\right]_p& I_p & 0_p& \Cdots[shorten-end=7pt]  \\
			\left[\Omega_{p,0}\right]_p &\left[\Omega_{p,p}\right]_p & I_p & 0_p & \Cdots[shorten-end=7pt] \\
		\Vdots & \Vdots & & \Ddots & \Ddots[shorten-end=8pt]  \\
				\left[\Omega_{pN,0}\right]_p & 		\left[\Omega_{pN,p}\right]_p & \Cdots &\left[\Omega_{pN,pN}\right]_p & I_P &  &  \\
		0_p & \left[\Omega_{pN+p,p}\right]_p &  &  & \left[\Omega_{pN+p,pN}\right]_p  & \Ddots[shorten-end=8pt] \\
		\Vdots[shorten-end=1pt] & \Ddots[shorten-end=-25pt] & \Ddots[shorten-end=-55pt]  & \Ddots[shorten-end=-75pt]  &  & \Ddots[shorten-end=-5pt]  \\
		\end{bNiceMatrix}
	\end{align*}
	\end{Proposition}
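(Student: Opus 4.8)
The plan is to read off the two claims of the statement from two different expressions for $\Omega$, exploiting the two Gauss--Borel factorizations \eqref{GaussBorelAMP}--\eqref{GaussBorelBMP} together with the moment relation \eqref{MatricesMomentosMP}. The key observation is that neither expression alone yields the full conclusion: the first form will give the unitriangular structure, while the second will give the band.

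First I would establish the alternative expression by pure bookkeeping. Substituting \eqref{GaussBorelAMP} and \eqref{GaussBorelBMP} into \eqref{MatricesMomentosMP} produces $\hat{S}^{-1}\hat{H}\hat{\Bar{S}}^{-\top}=S^{-1}H\Bar{S}^{-\top}R(\Lambda^\top)$. Left-multiplying by $S$ and right-multiplying by $\hat{\Bar{S}}^{\top}\hat{H}^{-1}$ collapses $\hat{\Bar{S}}^{-\top}\hat{\Bar{S}}^{\top}=I$ and $\hat{H}\hat{H}^{-1}=I$ on the left-hand side, leaving exactly $\Omega=S\hat{S}^{-1}=H\Bar{S}^{-\top}R(\Lambda^\top)\hat{\Bar{S}}^{\top}\hat{H}^{-1}$. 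This step only uses that $H,\hat{H}$ are nonsingular diagonal and that the unitriangular factors are invertible.

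Next, I would extract the structural claim by using the two faces of $\Omega$ for complementary purposes. From $\Omega=S\hat{S}^{-1}$, since $S$ and $\hat{S}$ are lower unitriangular and this class of semi-infinite matrices is closed under products and inverses, $\Omega$ is lower unitriangular; in particular its diagonal is filled with ones and everything strictly above vanishes. From the second expression I would read off the bandwidth: $H$ and $\hat{H}^{-1}$ are diagonal, $\Bar{S}^{-\top}$ and $\hat{\Bar{S}}^{\top}$ are upper unitriangular and hence have lower bandwidth $0$, while by the earlier Proposition $R(\Lambda^\top)=R(\Lambda_{[p]}^\top)$ is lower triangular and banded with $pN-r$ subdiagonals. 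Since lower bandwidths add under multiplication—legitimately so for semi-infinite matrices, because each entry of a product of banded factors is a finite sum—the lower bandwidth of $\Omega$ is at most $0+(pN-r)+0=pN-r$. Combining the two readings pins $\Omega$ down exactly as a lower unitriangular matrix with at most $pN-r$ nonzero subdiagonals and all remaining entries zero. The displayed $p\times p$ block form then follows by a routine re-indexing: collecting consecutive scalar indices into blocks of size $p$ turns the scalar lower bandwidth $pN-r$ into block lower bandwidth $\lceil(pN-r)/p\rceil=N$, with the diagonal blocks inheriting lower unitriangularity from $\Omega$.

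The main obstacle I anticipate is conceptual rather than computational. The two required properties are genuinely distributed across the two expressions for $\Omega$: the factorization $S\hat{S}^{-1}$ transparently gives lower unitriangularity but hides the band, whereas $H\Bar{S}^{-\top}R(\Lambda^\top)\hat{\Bar{S}}^{\top}\hat{H}^{-1}$ exhibits the band but, being a product of upper--lower--upper triangular factors, is not manifestly lower triangular. The crux is therefore to recognize that the equality of the two expressions forces the \emph{intersection} of the two structures, and to make the bandwidth-additivity argument rigorous in the semi-infinite setting where one must check that no convergence issues arise.
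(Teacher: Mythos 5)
Your proposal is correct and follows essentially the same route as the paper's own proof: the alternative expression is obtained by substituting the two Gauss--Borel factorizations into $\hat{\mathcal M}=\mathcal M R(\Lambda^\top)$, the $pN-r$ subdiagonals are read off from $R(\Lambda^\top)$ sandwiched between upper triangular factors, and the unit diagonal comes from $\Omega=S\hat S^{-1}$ being a product of lower unitriangular matrices. Your additional checks---that entrywise sums are finite in the semi-infinite setting and that bandwidths add---merely make rigorous what the paper states in one line, so the two arguments coincide in substance.
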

	\begin{proof}
		Starting from Equation \eqref{MatricesMomentosMP}, and substituting the explicit expressions in equations \eqref{GaussBorelAMP} and \eqref{GaussBorelBMP} yields the equivalence between both expressions.
		
		The fact that there are only $Np-r$ subdiagonals comes from the structure of the matrix $R(\Lambda^\top)$. When multiplied by upper triangular matrices on both sides, everything below the $Np-r$ subdiagonals will still be filled with zeros.
		
		The main diagonal is populated by ones since both matrices, $S$ and $\hat{S}^{-1}$, are lower unitriangular matrices.
	\end{proof}
	The connection matrix \emph{connects} the original polynomials and the perturbed ones as follows:
\begin{Proposition}
		We have the following connection formulas between the original and perturbed polynomials:
	\begin{align}
		\label{FormulasConexAMP} A(x) \Omega  & = R(x)\hat{A}(x), \\
		\label{FormulasConexBMP} \Omega \hat{B}(x) & = B(x) .
	\end{align}
\end{Proposition}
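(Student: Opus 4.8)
The plan is to read both formulas directly off the two equivalent expressions for the connection matrix, $\Omega = S\hat{S}^{-1} = H\bar{S}^{-\top}R(\Lambda^\top)\hat{\bar{S}}^{\top}\hat{H}^{-1}$, combined with the monic-normalization definitions $B(x)=SX_{[q]}(x)$, $A(x)=X_{[p]}^{\top}(x)\bar{S}^{\top}H^{-1}$ and their hatted analogues $\hat{B}(x)=\hat{S}X_{[q]}(x)$, $\hat{A}(x)=X_{[p]}^{\top}(x)\hat{\bar{S}}^{\top}\hat{H}^{-1}$. No new machinery is needed beyond the intertwining relation $R(x)X_{[p]}^{\top}(x)=X_{[p]}^{\top}(x)R(\Lambda^{\top})$ already recalled in the proof of \eqref{MatricesMomentosMP}; the whole argument is substitution and telescoping at the level of semi-infinite matrices, which is legitimate because $\Omega$ is banded lower unitriangular and all products act on the formal column $X_{[q]}$ or row $X_{[p]}^{\top}$.

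First I would dispatch \eqref{FormulasConexBMP}, which is essentially free. Using the first expression $\Omega=S\hat{S}^{-1}$ together with $\hat{B}(x)=\hat{S}X_{[q]}(x)$, the inner factors cancel:
\[
\Omega\hat{B}(x)=S\hat{S}^{-1}\hat{S}X_{[q]}(x)=SX_{[q]}(x)=B(x).
\]

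The substantive formula is \eqref{FormulasConexAMP}, for which I would use the second expression for $\Omega$. Substituting the definition of $A(x)$ gives
\[
A(x)\Omega=X_{[p]}^{\top}(x)\bar{S}^{\top}H^{-1}\cdot H\bar{S}^{-\top}R(\Lambda^{\top})\hat{\bar{S}}^{\top}\hat{H}^{-1},
\]
and the central pair $H^{-1}H=I$ and $\bar{S}^{\top}\bar{S}^{-\top}=I$ collapses, leaving $X_{[p]}^{\top}(x)R(\Lambda^{\top})\hat{\bar{S}}^{\top}\hat{H}^{-1}$. The key move is then to push $R$ through $X_{[p]}^{\top}$ from the right to the left via the intertwining relation, $X_{[p]}^{\top}(x)R(\Lambda^{\top})=R(x)X_{[p]}^{\top}(x)$, after which the tail $X_{[p]}^{\top}(x)\hat{\bar{S}}^{\top}\hat{H}^{-1}$ is recognized as $\hat{A}(x)$, yielding $A(x)\Omega=R(x)\hat{A}(x)$.

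I expect the only genuinely delicate point to be the correct invocation of that intertwining relation, i.e. making sure the bimodule action of the matrix coefficients $R_l$ on $\C^{p\times\infty}[x]$ is exactly what converts the right action of $R(\Lambda^{\top})$ into left multiplication by the $p\times p$ polynomial $R(x)$. Since $\Lambda_{[p]}^{\top}$ acts on the row $X_{[p]}^{\top}(x)$ as multiplication by $x$ (the transpose of $\Lambda_{[p]}X_{[p]}=xX_{[p]}$), this reduces termwise to $R_l x^{l}X_{[p]}^{\top}=X_{[p]}^{\top}\bigl(R_l\cdot(\Lambda^{\top})^{l}\bigr)$ and is precisely the identity already established; all remaining cancellations are purely formal and require no convergence or invertibility argument beyond the banded unitriangular structure of $\Omega$.
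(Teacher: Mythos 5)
Your proof is correct and follows essentially the same route as the paper: the formula $\Omega\hat{B}(x)=B(x)$ by direct cancellation in $S\hat{S}^{-1}\hat{S}X_{[q]}(x)$, and $A(x)\Omega=R(x)\hat{A}(x)$ by substituting the second expression for $\Omega$, collapsing $H^{-1}H$ and $\bar{S}^{\top}\bar{S}^{-\top}$, and invoking the intertwining relation $X_{[p]}^{\top}(x)R(\Lambda_{[p]}^{\top})=R(x)X_{[p]}^{\top}(x)$. Your additional remark justifying the bimodule action behind that intertwining identity is a welcome clarification but does not change the argument.
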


	\begin{proof}
		It follows immediately from:
		\begin{align*} 
			A(x)\Omega & = (\Bar{S}X_{[p]})^\top H^{-1} \cdot H \Bar{S}^{-\top} \left(R(\Lambda_{[p]}^\top)\right) \hat{\Bar{S}}^\top       \hat{H}^{-1} = X_{[p]}^\top R(\Lambda_{[p]}^\top) \hat{\Bar{S}}^\top \hat{H}^{-1} = R(x)\hat{A}(x) \\
			\Omega \hat{B}(x) & = S\hat{S}^{-1} \hat{S} X_{[q]} = B(x).
		\end{align*}
	\end{proof}
	
	\subsection{Simple Eigenvalues}
	For simplicity in the discussion, although it will be generalized later in the paper, we will assume that all zeros of the determinant of $R(x)$ are simple. From Proposition \ref{Prop2}, we know that the determinant is of degree $pN-r$, so there exist $M = pN-r$ roots ($M$ was used to denote the number of distinct roots of $\det R(x)$ in \S \ref{S:CSoJC}). Therefore, for each root of the determinant, there exists a left eigenvector (one Jordan chain of length 1) such that:
	\begin{equation*}
		\boldsymbol{v}_iR(x_i) = 0.
	\end{equation*}
	We introduce the following notation: 
	\begin{align*}
		\mathbb{A}_n(x_i) & = \left(\boldsymbol{v}_iA(x_i)\right)_{n+1} = \sum_{a=1}^{p}v_{i,a}A_n^{(a)}(x_i), \\
		\mathbb{K}_b^{[n]}(x_i,y) & = \boldsymbol{v}_iK^{[n]}(x_i,y)= \sum_{a=1}^{p}v_{i,a}K_{a,b}^{[n]}(x_i,y), 
	\end{align*}
	where the polynomial $K^{[n]}(x,y)$ are the CD kernel polynomial defined in \eqref{CDkernel}.
	
\begin{Proposition}
	The entries of the connection matrix can be obtained by studying a linear system of equations:
	\begin{equation}
		\label{SistemaQMP} -\begin{bNiceMatrix}
			\mathbb{A}_n(x_1) & \Cdots & \mathbb{A}_{n+M-1}(x_1) \\
			\mathbb{A}_n(x_2) & \Cdots & \mathbb{A}_{n+M-1}(x_2) \\
			\Vdots &  & \Vdots \\
			\mathbb{A}_n(x_M) & \Cdots & \mathbb{A}_{n+M-1}(x_M) 
		\end{bNiceMatrix}^{-1} \begin{bNiceMatrix}
			\mathbb{A}_{n-1}(x_1) \\
			\mathbb{A}_{n-1}(x_2) \\
			\Vdots \\
			\mathbb{A}_{n-1}(x_M)
		\end{bNiceMatrix}= \begin{bNiceMatrix}
			\Omega_{n,n-1} \\
			\Omega_{n+1,n-1} \\
			\Vdots \\
			\Omega_{n+M-1,n-1}
		\end{bNiceMatrix}.
	\end{equation}
\end{Proposition}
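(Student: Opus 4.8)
The plan is to obtain the system \eqref{SistemaQMP} directly from the connection formula \eqref{FormulasConexAMP}, by probing it with the left eigenvectors of $R$ at its eigenvalues. First I would left-multiply \eqref{FormulasConexAMP} by the row vector $\boldsymbol{v}_i$ and evaluate at $x=x_i$. Since $\boldsymbol{v}_iR(x_i)=0$ by the defining property of the eigenvector, the right-hand side $\boldsymbol{v}_iR(x_i)\hat{A}(x_i)$ vanishes identically, leaving the homogeneous relation
\[
\boldsymbol{v}_iA(x_i)\,\Omega = 0, \qquad i\in\{1,\dots,M\}.
\]
Here $\boldsymbol{v}_iA(x_i)$ is the semi-infinite row vector whose $(n+1)$-th entry is, by definition, exactly $\mathbb{A}_n(x_i)$, so the above is a scalar identity for each column of $\Omega$. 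The product is well defined column by column because $\Omega$ is banded, so each column sum is finite.

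Next I would read off the column indexed by $n-1$. Because $\Omega$ is lower unitriangular with exactly $pN-r=M$ subdiagonals, its $(n-1)$-th column has nonzero entries only in rows $n-1,n,\dots,n+M-1$, with $\Omega_{n-1,n-1}=1$. Expanding $\boldsymbol{v}_iA(x_i)\Omega=0$ in that column thus yields the finite relation
\[
\mathbb{A}_{n-1}(x_i)+\sum_{k=n}^{n+M-1}\mathbb{A}_{k}(x_i)\,\Omega_{k,n-1}=0,
\]
valid for every eigenvalue $x_i$. It is worth emphasizing the numerical coincidence that makes the argument work: the $M$ unknowns $\Omega_{n,n-1},\dots,\Omega_{n+M-1,n-1}$ are matched by exactly $M$ equations, one per simple eigenvalue, precisely because the bandwidth $pN-r$ of $\Omega$ equals the number $M=pN-r$ of roots of $\det R(x)$ guaranteed by Proposition \ref{Prop2}.

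Collecting these $M$ scalar identities into a single matrix equation gives
\[
\begin{bmatrix}\mathbb{A}_{n-1}(x_1)\\ \vdots\\ \mathbb{A}_{n-1}(x_M)\end{bmatrix}
+\begin{bmatrix}\mathbb{A}_{n}(x_1)&\cdots&\mathbb{A}_{n+M-1}(x_1)\\ \vdots & &\vdots\\ \mathbb{A}_{n}(x_M)&\cdots&\mathbb{A}_{n+M-1}(x_M)\end{bmatrix}
\begin{bmatrix}\Omega_{n,n-1}\\ \vdots\\ \Omega_{n+M-1,n-1}\end{bmatrix}=0,
\]
and solving for the vector of unknowns reproduces \eqref{SistemaQMP} after left-multiplication by the inverse of the coefficient matrix. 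The only point that genuinely needs justification—and the step I expect to be the main obstacle—is the invertibility of that $M\times M$ coefficient matrix $\bigl[\mathbb{A}_{n+j-1}(x_i)\bigr]_{i,j=1}^{M}$; its nonvanishing determinant is exactly the multi-eigenvalue analogue of the $\tau$-determinant whose non-cancellation encodes the existence of the perturbed family. I would therefore state \eqref{SistemaQMP} under the standing assumption that this determinant does not vanish, deferring to \S\ref{S:Criteria} the characterization of precisely when that holds.
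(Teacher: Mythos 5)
Your proposal is correct and follows essentially the same route as the paper: probing the connection formula $A(x)\Omega=R(x)\hat{A}(x)$ with the left eigenvectors $\boldsymbol{v}_i$ at $x=x_i$, using the band structure of $\Omega$ to truncate the $(n-1)$-th column to the $M$ unknowns, and inverting the resulting $M\times M$ coefficient matrix. Your explicit remarks on the matching of bandwidth $pN-r$ with the root count $M$ and on the standing non-vanishing assumption (deferred to \S\ref{S:Criteria}) are implicit in the paper but add no new method.
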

	\begin{proof}
		From Equation \eqref{FormulasConexAMP} for the $(n-1)$-th entry:
		\begin{equation*}
			\begin{bNiceMatrix}
				A_{n-1}^{(1)}(x) + \Omega_{n,n-1}A_{n}^{(1)}(x) + \Omega_{n+1,n-1}A_{n+1}^{(1)}(x) + \cdots + \Omega_{n+M-1,n-1}A_{n+M-1}^{(1)}(x) \\[3pt]
				A_{n-1}^{(2)}(x) + \Omega_{n,n-1}A_{n}^{(2)}(x) + \Omega_{n+1,n-1}A_{n+1}^{(2)}(x) + \cdots + \Omega_{n+M-1,n-1}A_{n+M-1}^{(2)}(x) \\
				\Vdots \\
				A_{n-1}^{(p)}(x) + \Omega_{n,n-1}A_{n}^{(p)}(x) + \Omega_{n+1,n-1}A_{n+1}^{(p)}(x) + \cdots + \Omega_{n+M-1,n-1}A_{n+M-1}^{(p)}(x) 
			\end{bNiceMatrix} = R(x) \begin{bNiceMatrix}
				\hat{A}_n^{(1)}(x) \\[3pt]
				\hat{A}_n^{(2)}(x) \\
				\Vdots \\
				\hat{A}_n^{(p)}(x) 
			\end{bNiceMatrix}.
		\end{equation*}
		Evaluating this expression at $x=x_1$ and left multiplying by   the left eigenvector $\boldsymbol{v}_1$ yields:
		\begin{gather*}
			\boldsymbol{v}_i  R(x_1)\begin{bNiceMatrix}
				\hat{A}_n^{(1)}(x_1) \\
				\hat{A}_n^{(2)}(x_1) \\
				\Vdots \\
				\hat{A}_n^{(p)}(x_1) 
			\end{bNiceMatrix} = 0 ,\\
			\mathbb{A}_{n-1}(x_1) + \Omega_{n,n-1}\mathbb{A}_{n}(x_1) + \Omega_{n+1,n-1}\mathbb{A}_{n+1}(x_1) + \cdots + \Omega_{n+M-1,n-1}\mathbb{A}_{n+M-1}(x_1) = 0.
		\end{gather*}
		It is noteworthy here that we have $M$ zeros and hence $M$ left eigenvectors, which coincides with the number of unknown entries per column for the matrix $\Omega$.  Similar equations appear for the other eigenvalues $x_i$. We collect all these equations for $x_i$ with $i \in \{1,\dots,M\}$:
		\begin{equation*}
		\left\{	\begin{aligned}
				\mathbb{A}_{n-1}(x_1) + \Omega_{n,n-1}\mathbb{A}_{n}(x_1) + \Omega_{n+1,n-1}\mathbb{A}_{n+1}(x_1)  + \cdots + \Omega_{n+M-1,n-1}\mathbb{A}_{n+M-1}(x_1)&=0 \\
				\mathbb{A}_{n-1}(x_2) + \Omega_{n,n-1}\mathbb{A}_{n}(x_2) + \Omega_{n+1,n-1}\mathbb{A}_{n+1}(x_2)  + \cdots + \Omega_{n+M-1,n-1}\mathbb{A}_{n+M-1}(x_2)&=0 \\
		&\,\,	\vdots \\
				\mathbb{A}_{n-1}(x_M) + \Omega_{n,n-1}\mathbb{A}_{n}(x_M) + \Omega_{n+1,n-1}\mathbb{A}_{n+1}(x_M)  + \cdots + \Omega_{n+M-1,n-1}\mathbb{A}_{n+M-1}(x_M)&=0.
			\end{aligned} \right.
		\end{equation*}
		Which can be rewritten as an inhomogeneous  linear system:
		\begin{equation*}
			-\begin{bNiceMatrix}
				\mathbb{A}_{n-1}(x_1) \\
				\mathbb{A}_{n-1}(x_2) \\
				\Vdots \\
				\mathbb{A}_{n-1}(x_M)
			\end{bNiceMatrix} = \begin{bNiceMatrix}
				\mathbb{A}_{n}(x_1) & \Cdots & \mathbb{A}_{n+M-1}(x_1) \\
				\mathbb{A}_{n}(x_2) & \Cdots & \mathbb{A}_{n+M-1}(x_2) \\
				\Vdots & & \Vdots \\
				\mathbb{A}_{n}(x_M) & \Cdots & \mathbb{A}_{n+M-1}(x_M) 
			\end{bNiceMatrix} \begin{bNiceMatrix}
				\Omega_{n,n-1} \\
				\Omega_{n+1,n-1} \\
				\Vdots \\
				\Omega_{n+M-1,n-1}
			\end{bNiceMatrix}.
		\end{equation*}
		From which it is easy to arrive at the stated  equation.
	\end{proof}
	
	Let's examine what the commutator, $[\Omega,\Pi_n]$, where $\Pi_n$ is the diagonal matrix with all entries zero but for the $n$ first which equal the unity. We have:
	\begin{equation}
		\label{TruncaciónOmegaMP}  
\begin{aligned}
			 [\Omega,\Pi_n] &= \Omega\Pi_n-\Omega^{[n]} \\&=\left[\begin{NiceMatrix}[columns-width=1.2cm]
			0 & \Cdots & 0 & \Cdots \\
			\Vdots & & \Vdots \\
			0 & \Cdots & \Omega_{n,n-M} & \Cdots & & \Omega_{n,n-1} & 0 & \Cdots \\
			\Vdots & & 0 & \Ddots &  & \Vdots & \Vdots &\\
			& & \Vdots & &  & &&\\
			& & & & \Ddots[shorten-end=20pt]& \Omega_{n+M-1,n-1} &&\\\\
			0 & \Cdots & 0 & \Cdots & & 0 &&\\
			\Vdots[shorten-end=2pt]& & \Vdots[shorten-end=2pt]& &  & \Vdots[shorten-end=2pt]
		\end{NiceMatrix}\right],
\end{aligned}
	\end{equation} 
	were $ \Omega_{n,n-M}$ sites in the $(n+1,n-M+1)$ entry of the represented matrix. For $n < M$ we assume that the commutator starts at the first column and the negative components should be ignored.
	
\begin{Proposition}
		The connection formulas for the kernel polynomials are: 
	\begin{multline}
		\label{KernerlConexMP} R(x)\hat{K}^{[n]}(x,y) = K^{[n]}(x,y)  + 
		\begin{bNiceMatrix}
			A_n^{(1)}(x) & \Cdots & A_{n+M-1}^{(1)}(x) \\
			\Vdots &  &  \Vdots[shorten-end=-4pt] \\
			A_n^{(p)}(x) & \Cdots & A_{n+M-1}^{(p)}(x)
		\end{bNiceMatrix}
		\\\times \begin{bNiceMatrix}
			\Omega_{n,n-1} & \Cdots & &\Omega_{n,n-M} \\
		&   &&0 \\
			\Vdots & \Iddots[shorten-end=-4pt]&\Iddots[shorten-end=10pt]&\Vdots\\
			\Omega_{n+M-1,n-1}&0&\Cdots& 0
		\end{bNiceMatrix} 
			 \begin{bNiceMatrix}
			\hat{B}^{(1)}_{n-1}(y) &  \Cdots & \hat{B}^{(q)}_{n-1}(y)  \\
			\Vdots & & \Vdots \\
			\hat{B}^{(1)}_{n-M} (y) &  \Cdots & \hat{B}^{(q)}_{n-M}(y) \\
		\end{bNiceMatrix}.
	\end{multline}
    As in equation \eqref{TruncaciónOmegaMP}, the case $n < M$ needs extra considerations. We will assume that $\Omega_{j,n-M+i} = 0$ and $\hat{B}_{n-M+i}^{(b)}=0$ when $n-M + i < 0$ for any $i$ and $j$. Another way to express this is by noting that, if $n<M$, the matrix of the $\Omega_{i,j}$ will be $M \times n$ and the $\hat{B}_i^{(b)}(y)$ components will range from $n-1$ to $0$.
\end{Proposition}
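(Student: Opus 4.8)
The plan is to recognize both kernels as truncated bilinear products and then reduce the whole statement to the single commutator computed in \eqref{TruncaciónOmegaMP}. From the definition \eqref{CDkernel} the Christoffel--Darboux kernel is exactly the truncated product
\[
K^{[n]}(x,y) = A(x)\,\Pi_n\, B(y),
\]
where $A(x)$ is the full $p\times\infty$ row of type I polynomials, $B(y)$ the full $\infty\times q$ column of type II polynomials, and $\Pi_n$ the diagonal projector onto the first $n$ coordinates introduced before \eqref{TruncaciónOmegaMP} (using $\Pi_n^2=\Pi_n$); analogously $\hat K^{[n]}(x,y) = \hat A(x)\,\Pi_n\,\hat B(y)$ for the perturbed families. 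Because $\Pi_n$ has finite rank and $\Omega$ is banded, every product below is a finite sum entrywise, so all manipulations are legitimate.

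First I would multiply $\hat K^{[n]}$ on the left by $R(x)$ and insert the connection formula \eqref{FormulasConexAMP}, $R(x)\hat A(x)=A(x)\Omega$, obtaining
\[
R(x)\hat K^{[n]}(x,y) = R(x)\hat A(x)\,\Pi_n\,\hat B(y) = A(x)\,\Omega\,\Pi_n\,\hat B(y).
\]
Next I would split $\Omega\Pi_n = \Pi_n\Omega + [\Omega,\Pi_n]$. For the first part, associativity together with the second connection formula \eqref{FormulasConexBMP}, $\Omega\hat B(y)=B(y)$, yields $A(x)\,\Pi_n\Omega\,\hat B(y) = A(x)\,\Pi_n B(y) = K^{[n]}(x,y)$, so the unperturbed kernel reappears. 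What remains is precisely the commutator contribution, leaving the master identity
\[
R(x)\hat K^{[n]}(x,y) = K^{[n]}(x,y) + A(x)\,[\Omega,\Pi_n]\,\hat B(y).
\]

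The last step is combinatorial bookkeeping on $[\Omega,\Pi_n]$. Since $\Omega$ is lower unitriangular with exactly $M=pN-r$ subdiagonals, the nonzero entries of the commutator are the $\Omega_{i,j}$ with $i\in\{n,\dots,n+M-1\}$, $j\in\{n-M,\dots,n-1\}$ and $i-j\le M$; writing $i=n+s$, $j=n-1-t$ the band constraint becomes the anti-triangular condition $s+t\le M-1$, which is exactly the upper-left triangular array of $\Omega$-entries (with vanishing lower-right corner) displayed in \eqref{KernerlConexMP}. Contracting this block on the left with the columns $A_n(x),\dots,A_{n+M-1}(x)$ of $A(x)$ and on the right with the rows $\hat B_{n-1}(y),\dots,\hat B_{n-M}(y)$ of $\hat B(y)$ assembles the three-factor product in the statement.

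The only genuine obstacle is the boundary regime $n<M$: there some column indices $n-M+i$ become negative, $\Pi_n$ has fewer nonzero entries, and the commutator block is truncated from the left. I would handle it by the convention $\Omega_{j,\,n-M+i}=0$ and $\hat B^{(b)}_{n-M+i}=0$ for $n-M+i<0$ --- equivalently shrinking the middle matrix to size $M\times n$ and letting the $\hat B$ rows run from index $n-1$ down to $0$, exactly as indicated after \eqref{KernerlConexMP} --- after which the same commutator identity holds verbatim.
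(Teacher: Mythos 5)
Your proof is correct and takes essentially the same route as the paper's: both start from $R(x)\hat{K}^{[n]}(x,y)=R(x)\hat{A}(x)\Pi_n\hat{B}(y)$, insert the connection formulas $R(x)\hat{A}(x)=A(x)\Omega$ and $\Omega\hat{B}(y)=B(y)$, split $\Omega\Pi_n=\Pi_n\Omega+[\Omega,\Pi_n]$ to recover $K^{[n]}(x,y)$ plus the commutator term, and then read off the anti-triangular block of the banded commutator \eqref{TruncaciónOmegaMP} to assemble the triple product. Your explicit band bookkeeping ($i=n+s$, $j=n-1-t$, $s+t\le M-1$) and the vanishing convention for the boundary case $n<M$ agree with the paper's treatment.
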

	\begin{proof}
		From the definition of the kernel polynomial, we have:
		\begin{align*}
			R(x)\hat{K}^{[n]}(x,y) &= R(x)\hat{A}(x)\Pi_n \hat{B}(y) = A(x)\Omega \Pi_n \hat{B}(y) = A(x) \Pi_n \Omega \hat{B}(y) + A(x)[\Omega, \Pi_n]\hat{B}(y)\\& = 
			K^{[n]}(x,y) + A(x)[\Omega, \Pi_n]\hat{B}(y).
		\end{align*}
		Recalling \eqref{TruncaciónOmegaMP}, for the last term in the above expression we find: 
		\begin{align*}
			& A(x)\begin{bNiceMatrix}
				0 & \Cdots & 0\\
				\Vdots & & \Vdots \\
				0 & \Cdots & 0 \\
				\sum_{i=1}^M\Omega_{n,n-i}\hat{B}^{(1)}_{n-i}(y) & \Cdots & \sum_{i=1}^M\Omega_{n,n-i}\hat{B}^{(q)}_{n-i}(y)\\ \\
				\sum_{i=1}^{M-1}\Omega_{n+1,n-i}\hat{B}^{(1)}_{n-i}(y) & \Cdots & \sum_{i=1}^{M-1}\Omega_{n+1,n-i}\hat{B}^{(q)}_{n-i}(y) \\
				\Vdots & & \Vdots \\
				\Omega_{n+M-2,n-2}\hat{B}^{(1)}_{n-2}(y) + \Omega_{n+M-2,n-1}\hat{B}^{(1)}_{n-1}(y) & \Cdots & \Omega_{n+M-2,n-2}\hat{B}^{(q)}_{n-2}(y) + \Omega_{n+M-2,n-1}\hat{B}^{(q)}_{n-1}(y) \\ \\
				\Omega_{n+M-1,n-1}\hat{B}^{(1)}_{n-1}(y) & \Cdots & \Omega_{n+M-1,n-1}\hat{B}^{(q)}_{n-1}(y)\\
				0 & \Cdots & 0\\
				\Vdots & & \Vdots
			\end{bNiceMatrix} \\ 
			& = \begin{bNiceMatrix}
				\sum_{j=0}^{M-1}\sum_{i=1}^{M-j}A^{(1)}_{n+j}(x)\Omega_{n+j,n-i}\hat{B}^{(1)}_{n-i}(y) & \Cdots & \sum_{j=0}^{M-1}\sum_{i=1}^{M-j}A^{(1)}_{n+j}(x)\Omega_{n+j,n-i}\hat{B}^{(q)}_{n-i}(y) \\[3pt]
				\sum_{j=0}^{M-1}\sum_{i=1}^{M-j}A^{(2)}_{n+j}(x)\Omega_{n+j,n-i}\hat{B}^{(1)}_{n-i}(y) & \Cdots & \sum_{j=0}^{M-1}\sum_{i=1}^{M-j}A^{(2)}_{n+j}(x)\Omega_{n+j,n-i}\hat{B}^{(q)}_{n-i}(y) \\
				\Vdots & & \Vdots \\
				\sum_{j=0}^{M-1}\sum_{i=1}^{M-j}A^{(p)}_{n+j}(x)\Omega_{n+j,n-i}\hat{B}^{(1)}_{n-i}(y) & \Cdots & \sum_{j=0}^{M-1}\sum_{i=1}^{M-j}A^{(p)}_{n+j}(x)\Omega_{n+j,n-i}\hat{B}^{(q)}_{n-i}(y) 
			\end{bNiceMatrix}  \\
			& = \begin{bNiceMatrix}
				A_n^{(1)}(x)  & \Cdots & A_{n+M-1}^{(1)}(x) \\
				\Vdots & & \Vdots \\
				A_n^{(p)}(x) & \Cdots & A_{n+M-1}^{(p)}(x)
			\end{bNiceMatrix} \begin{bNiceMatrix}
			\Omega_{n,n-1} & \Cdots & &\Omega_{n,n-M} \\
			&   &&0 \\
			\Vdots & \Iddots[shorten-end=-4pt]&\Iddots[shorten-end=10pt]&\Vdots\\
			\Omega_{n+M-1,n-1}&0&\Cdots& 0
			\end{bNiceMatrix} \begin{bNiceMatrix}
				\hat{B}^{(1)}_{n-1}(y)  & \Cdots & \hat{B}^{(q)}_{n-1}(y)  \\
				\Vdots & & \Vdots \\
				\hat{B}^{(1)}_{n-M} (y)  & \Cdots & \hat{B}^{(q)}_{n-M}(y) \\
			\end{bNiceMatrix}.
		\end{align*}
	\end{proof}
\begin{Definition}[$\tau$ determinants]
For $i\in\{0,\dots,M-1\}$, we introduce the  $\tau$ determinants defined as follows:
	\begin{align*}
		\tau_n &\coloneq  \begin{vNiceMatrix}
			\mathbb{A}_n (x_1) & \mathbb{A}_{n+1} (x_1) & \Cdots & \mathbb{A}_{n+M-1}(x_1) \\
			\mathbb{A}_n (x_2) & \mathbb{A}_{n+1} (x_2) & \Cdots & \mathbb{A}_{n+M-1}(x_2) \\
			\Vdots & \Vdots & \Ddots & \Vdots \\
			\mathbb{A}_n (x_M) & \mathbb{A}_{n+1} (x_M) & \Cdots & \mathbb{A}_{n+M-1}(x_M)    
		\end{vNiceMatrix} \\ \\
		\tau_n^{(i)}  &\coloneq   \begin{vNiceMatrix}
			\mathbb{A}_n (x_1) & \mathbb{A}_{n+1} (x_1) & \Cdots & \mathbb{A}_{n+i-1}(x_1) & \mathbb{A}_{n+i+1}(x_M) &\Cdots & \mathbb{A}_{n+M}(x_1) \\
			\mathbb{A}_n (x_2) & \mathbb{A}_{n+1} (x_2) & \Cdots & \mathbb{A}_{n+i-1}(x_2) & \mathbb{A}_{n+i+1}(x_M) &\Cdots & \mathbb{A}_{n+M}(x_2) \\
			\Vdots & \Vdots & & \Vdots & \Vdots & & \Vdots \\
			\mathbb{A}_n (x_M) & \mathbb{A}_{n+1} (x_M) & \Cdots & \mathbb{A}_{n+i-1}(x_M) & \mathbb{A}_{n+i+1}(x_M) &\Cdots & \mathbb{A}_{n+M}(x_M)
		\end{vNiceMatrix}.
	\end{align*}
\end{Definition}

\begin{Remark}
		Both definitions would coincide for the case $i=M-1$, $\tau_n = \tau_n^{(M-1)} $.
\end{Remark}

In terms of these $\tau$ determinants we find the following Christoffel formulas:
	
\begin{Theorem}[Christoffel formulas]\label{Theorem:Christoffel_Formulas}
		The following equalities hold:
	\begin{align}
		\label{Omegan+MMP} \Omega_{n+M,n} & = (-1)^{M}\frac{\tau_{n}}{\tau_{n+1}}, \: \: \: \Omega_{n+1+i,n}=(-1)^{i+1}\frac{\tau_n^{(i)}}{\tau_{n+1}},\\
		\label{ConexDefinitivaBMP} \hat{B}^{(b)}_{n-1}(y) & = \frac{\begin{vNiceMatrix}
				\mathbb{K}_b^{[n]}(x_1,y) & \mathbb{A}_n (x_1) & \Cdots & \mathbb{A}_{n+M-2}(x_1)  \\[3pt]
				\mathbb{K}_b^{[n]}(x_2,y) & \mathbb{A}_n (x_2) &  \Cdots & \mathbb{A}_{n+M-2}(x_2) \\
				\Vdots & \Vdots & & \Vdots \\
				\mathbb{K}_b^{[n]}(x_M,y) & \mathbb{A}_n (x_M) & \Cdots & \mathbb{A}_{n+M-2}(x_M) 
		\end{vNiceMatrix}}{\tau_{n-1}} ,\\ 
		\\
		\label{ConexDefinitivaAMP}  \left[R(x)\hat{A}(x)\right]_{n+1,a}  & = \frac{\begin{vNiceMatrix}
				A_n^{(a)}(x) & \Cdots & A_{n+M-1}^{(a)}(x) & A_{n+M}^{(a)}(x) \\
				\mathbb{A}_n(x_1) & \Cdots & \mathbb{A}_{n+M-1}(x_1) & \mathbb{A}_{n+M}(x_1) \\
				\Vdots & & \Vdots & \Vdots \\
				\mathbb{A}_n(x_M) & \Cdots &  \mathbb{A}_{n+M-1}(x_M) & \mathbb{A}_{n+M}(x_M)
		\end{vNiceMatrix}}{\tau_{n+1}} .
	\end{align}
\end{Theorem}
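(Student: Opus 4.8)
The plan is to extract all three families of formulas from the two connection formulas \eqref{FormulasConexAMP}--\eqref{FormulasConexBMP}, the banded lower unitriangular structure of $\Omega$, and the spectral condition $\boldsymbol{v}_iR(x_i)=0$, combined with Cramer's rule and elementary column reorderings. The engine is the linear system \eqref{SistemaQMP}: shifting its free index from $n-1$ to $n$ (equivalently, reading the $n$-th column of $\Omega$ rather than the $(n-1)$-th) produces
\[
\begin{bNiceMatrix}
\mathbb{A}_{n+1}(x_1) & \Cdots & \mathbb{A}_{n+M}(x_1)\\
\Vdots & & \Vdots\\
\mathbb{A}_{n+1}(x_M) & \Cdots & \mathbb{A}_{n+M}(x_M)
\end{bNiceMatrix}
\begin{bNiceMatrix}\Omega_{n+1,n}\\ \Vdots\\ \Omega_{n+M,n}\end{bNiceMatrix}
= -\begin{bNiceMatrix}\mathbb{A}_{n}(x_1)\\ \Vdots\\ \mathbb{A}_{n}(x_M)\end{bNiceMatrix},
\]
whose coefficient determinant is exactly $\tau_{n+1}$.

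First I would establish \eqref{Omegan+MMP}. Applying Cramer's rule to the system above, the unknown $\Omega_{n+1+i,n}$ occupies the $(i+1)$-th slot, so its numerator is the coefficient determinant with the $(i+1)$-th column replaced by $-(\mathbb{A}_n(x_j))_j$. Factoring out the sign and sliding the inserted column $\mathbb{A}_n$ into its natural position turns the remaining columns into a consecutive block with a single index omitted, identifying the numerator with $\pm\tau_n^{(i)}$; the accumulated sign is $(-1)^{i+1}$, and the extreme slot $i=M-1$ gives $\Omega_{n+M,n}=(-1)^{M}\tau_n/\tau_{n+1}$ upon invoking $\tau_n^{(M-1)}=\tau_n$.

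Next, for \eqref{ConexDefinitivaAMP} I would read off the $(a,n)$ component of \eqref{FormulasConexAMP}, namely $[R(x)\hat A(x)]_{n+1,a}=A_n^{(a)}(x)+\sum_{i=0}^{M-1}\Omega_{n+1+i,n}A_{n+1+i}^{(a)}(x)$, and insert the values just obtained. After clearing the common denominator $\tau_{n+1}$, the resulting alternating combination of the $A_{n+j}^{(a)}(x)$ against the $\mathbb{A}$-minors obtained by deleting one column is precisely the cofactor (first-row Laplace) expansion of the claimed $(M+1)\times(M+1)$ determinant, so the identity follows. Finally, for \eqref{ConexDefinitivaBMP} I would start from the kernel connection formula \eqref{KernerlConexMP}, act on the left with $\boldsymbol{v}_i$ and evaluate at $x=x_i$; since $\boldsymbol{v}_iR(x_i)=0$ the left-hand side drops out, leaving a matrix identity relating the vector $(\mathbb{K}_b^{[n]}(x_i,y))_i$, the coefficient matrix with columns $\mathbb{A}_n,\dots,\mathbb{A}_{n+M-1}$ (determinant $\tau_n$), and the anti-lower-triangular array of $\Omega$'s appearing in \eqref{KernerlConexMP}. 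The decisive point, mirroring the $2\times2$ computation of the worked example, is that this array vanishes strictly below its anti-diagonal, so left-multiplying by $\boldsymbol e_M^{\mathsf T}$ times the inverse coefficient matrix collapses the last row to the single term $\Omega_{n+M-1,n-1}\hat B^{(b)}_{n-1}(y)$. Solving, via Cramer for $\boldsymbol e_M^{\mathsf T}$ applied to the $\mathbb{K}_b$-vector and inserting the shifted value $\Omega_{n+M-1,n-1}=(-1)^{M}\tau_{n-1}/\tau_n$, yields \eqref{ConexDefinitivaBMP} after transporting the $\mathbb{K}_b$ column to the front.

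The main obstacle is the sign and column-permutation bookkeeping that matches each Cramer numerator to the correctly indexed $\tau_n^{(\cdot)}$ determinant, together with the boundary cases $n<M$, where the bands of $\Omega$ and of the kernel array are truncated as recorded in \eqref{TruncaciónOmegaMP} and in the kernel proposition. The collapse of the anti-triangular $\Omega$-array under the $\boldsymbol e_M^{\mathsf T}$ projection is the conceptual crux of the $\hat B$ formula, and one must verify carefully that exactly one term survives so that $\hat B^{(b)}_{n-1}(y)$ can be isolated.
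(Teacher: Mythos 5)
Your proposal is correct and follows essentially the same route as the paper's proof: both solve the eigenvector-evaluated linear system \eqref{SistemaQMP} by Cramer's rule (the paper phrases it as left-multiplication by unit row vectors against the inverse coefficient matrix, which is the same computation), both obtain \eqref{ConexDefinitivaAMP} by recognizing the entrywise connection formula as the bordered-determinant (Laplace) expansion over $\tau_{n+1}$, and both derive \eqref{ConexDefinitivaBMP} by killing $R(x)\hat{K}^{[n]}(x,y)$ with $\boldsymbol{v}_iR(x_i)=0$ and exploiting exactly the collapse of the anti-triangular $\Omega$-array under the $\begin{bNiceMatrix}0 & \Cdots & 0 & 1\end{bNiceMatrix}$ projection to isolate $\Omega_{n+M-1,n-1}\hat{B}^{(b)}_{n-1}(y)$. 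The only cosmetic difference is that you work with the column index $n$ (denominator $\tau_{n+1}$) from the start rather than deriving the formulas at index $n-1$ and shifting, and your flagged concerns (sign bookkeeping and the truncated bands when $n<M$) are precisely the points the paper handles via the conventions attached to \eqref{TruncaciónOmegaMP}.
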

	\begin{proof}
		By left-multiplying the vector $\begin{bNiceMatrix} 
			0 & \Cdots & 0 & 1 
		\end{bNiceMatrix}$ to Equation \eqref{SistemaQMP}, we can solve for the entry $\Omega_{n+M-1,n-1}$
		\begin{align*}
			\Omega_{n+M-1,n-1} &  = - \begin{bNiceMatrix} 
				0 & \Cdots & 0 & 1 
			\end{bNiceMatrix}\begin{bNiceMatrix}
				\mathbb{A}_n(x_1) & \Cdots & \mathbb{A}_{n+M-1}(x_1) \\
				\mathbb{A}_n(x_2) & \Cdots & \mathbb{A}_{n+M-1}(x_2) \\
				\Vdots &  & \Vdots \\
				\mathbb{A}_n(x_M) & \Cdots & \mathbb{A}_{n+M-1}(x_M) 
			\end{bNiceMatrix}^{-1} \begin{bNiceMatrix}
				\mathbb{A}_{n-1}(x_1) \\
				\mathbb{A}_{n-1}(x_2) \\
				\Vdots \\
				\mathbb{A}_{n-1}(x_M)
			\end{bNiceMatrix}  \\
			& = \frac{\begin{vNiceMatrix}
					\mathbb{A}_n(x_1) & \Cdots & \mathbb{A}_{n+M-1}(x_1) & \mathbb{A}_{n-1}(x_1) \\
					\Vdots &  & \Vdots & \Vdots \\
					\mathbb{A}_n(x_M) & \cdots & \mathbb{A}_{n+M-1}(x_M) & \mathbb{A}_{n-1}(x_M) \\
					0 & \Cdots & 1 & 0
			\end{vNiceMatrix}}{\tau_n} \\&= - \frac{\begin{vNiceMatrix}
					\mathbb{A}_n(x_1) & \Cdots & \mathbb{A}_{n+M-2}(x_1) & \mathbb{A}_{n-1}(x_1) \\
					\Vdots &  & \Vdots & \Vdots \\
					\mathbb{A}_n(x_M) & \Cdots & \mathbb{A}_{n+M-2}(x_M) & \mathbb{A}_{n-1}(x_M) 
			\end{vNiceMatrix}}{\tau_n} \\
			& = (-1)^{M}\frac{\tau_{n-1}}{\tau_n}.
		\end{align*}
		The entry $\Omega_{n+i,n-1}$ is obtained by left-multiplying by the vector 
		\begin{equation*}
			\begin{bNiceMatrix}
				0 & \Cdots & (\text{i-times}) & \Cdots & 0 & 1 & 0 & \Cdots & 0
			\end{bNiceMatrix}, 
		\end{equation*}
		and after an analogous procedure, we arrive at both relationships appearing in Equation \eqref{Omegan+MMP}.

		We introduce the following notation:
		 \begin{gather*}
			\left[ \Omega_M \right]  \coloneq \begin{bNiceMatrix}
				\Omega_{n,n-1} & \Cdots & &\Omega_{n,n-M} \\
				&   &&0 \\
				\Vdots & \Iddots[shorten-end=-4pt]&\Iddots[shorten-end=10pt]&\Vdots\\
				\Omega_{n+M-1,n-1}&0&\Cdots& 0
			\end{bNiceMatrix},\\
			\begin{aligned}
				\left[ \hat{B}(y) \right] & = \begin{bNiceMatrix}
					\hat{B}^{(1)}_{n-1}(y) &  \Cdots & \hat{B}^{(q)}_{n-1}(y)  \\
					\Vdots & & \Vdots \\
					\hat{B}^{(1)}_{n-M} (y) &  \Cdots & \hat{B}^{(q)}_{n-M}(y) \\
				\end{bNiceMatrix}, & \left[ \hat{B}(y) \right]_b & = \begin{bNiceMatrix}
					\hat{B}^{(b)}_{n-1}(y) \\
					\Vdots \\
					\hat{B}^{(b)}_{n-M}(y)
				\end{bNiceMatrix}.
			\end{aligned}
		\end{gather*}	
		From Equation \eqref{KernerlConexMP}, we can evaluate at $x=x_1$ and left multiply  with the vector $\boldsymbol{v}_1$. We obtain:
		\begin{equation*}
			0 = \boldsymbol{v}_1K^{[n]}(x_1,y) + \boldsymbol{v}_1 \begin{bNiceMatrix}
				A_n^{(1)}(x) & A_{n+1}^{(1)}(x) & \Cdots & A_{n+M-1}^{(1)}(x) \\[3pt]
				A_n^{(2)}(x) & A_{n+1}^{(2)}(x) & \Cdots & A_{n+M-1}^{(2)}(x) \\
				\Vdots & \Vdots & & \Vdots \\
				A_n^{(p)}(x) & A_{n+1}^{(p)}(x) & \Cdots & A_{n+M-1}^{(p)}(x)
			\end{bNiceMatrix} \begin{bmatrix}
				\Omega_{M}
			\end{bmatrix} \begin{bmatrix}
				\hat{B}(y)
			\end{bmatrix} . 
		\end{equation*}
	The last equation, when dealing only with the \(b\)-th column of the matrix expression, implies: 
		\begin{equation*}
			-\mathbb{K}_b^{[n]}(x_1,y) = \begin{bNiceMatrix}
				\mathbb{A}_n(x_1) & \mathbb{A}_{n+1}(x_1) & \Cdots & \mathbb{A}_{n+M-1}(x_1)
			\end{bNiceMatrix}\begin{bmatrix}
				\Omega_{M}
			\end{bmatrix} \begin{bmatrix}
				\hat{B}(y)
			\end{bmatrix}_b .
		\end{equation*}
		Taking into account the system that appears for the roots and their corresponding eigenvectors, we obtain:
		\begin{multline*}
			-\begin{bNiceMatrix}
				\mathbb{K}_b^{[n]}(x_1,y) \\[3pt]
				\mathbb{K}_b^{[n]}(x_2,y) \\
				\Vdots \\
				\mathbb{K}_b^{[n]}(x_M,y)
			\end{bNiceMatrix}   = \begin{bNiceMatrix}
				\mathbb{A}_n(x_1) & \mathbb{A}_{n+1}(x_1) & \Cdots & \mathbb{A}_{n+M-1}(x_1) \\[3pt]
				\mathbb{A}_n(x_2) & \mathbb{A}_{n+1}(x_2) & \Cdots & \mathbb{A}_{n+M-1}(x_2) \\
				\Vdots & \Vdots & & \Vdots \\
				\mathbb{A}_n(x_M) & \mathbb{A}_{n+1}(x_M) & \Cdots & \mathbb{A}_{n+M-1}(x_M) \\
			\end{bNiceMatrix}\begin{bmatrix}
				\Omega_{M}
			\end{bmatrix} \begin{bmatrix}
				\hat{B}(y)
			\end{bmatrix}_b \\
			-\begin{bNiceMatrix}
				\mathbb{A}_n(x_1) & \Cdots & \mathbb{A}_{n+M-1}(x_1) \\
				\Vdots & & \Vdots \\
				\mathbb{A}_n(x_M) &\Cdots & \mathbb{A}_{n+M-1}(x_M) \\
			\end{bNiceMatrix}^{-1}   \begin{bNiceMatrix}
				\mathbb{K}_b^{[n]}(x_1,y) \\[3pt]
				\mathbb{K}_b^{[n]}(x_2,y) \\
				\Vdots \\
				\mathbb{K}_b^{[n]}(x_M,y)
			\end{bNiceMatrix}  = \begin{bmatrix}
				\Omega_{M}
			\end{bmatrix} \begin{bmatrix}
				\hat{B}(y)
			\end{bmatrix}_b .
		\end{multline*}
		By left-multiplying the vector $\begin{bNiceMatrix} 0 & \Cdots & 0 & 1 \end{bNiceMatrix}$ to both members of the last expression:
		\begin{gather*}
			\begin{multlined}[t][.9\textwidth]
				-\begin{bNiceMatrix} 0 & \Cdots & 0 & 1 \end{bNiceMatrix} \begin{bNiceMatrix}
				\mathbb{A}_n(x_1) & \Cdots & \mathbb{A}_{n+M-1}(x_1) \\
				\Vdots & & \Vdots \\
				\mathbb{A}_n(x_M) &\Cdots & \mathbb{A}_{n+M-1}(x_M) \\
			\end{bNiceMatrix}^{-1} \begin{bNiceMatrix}
				\mathbb{K}_b^{[n]}(x_1,y) \\[3pt]
				\mathbb{K}_b^{[n]}(x_2,y) \\
				\Vdots \\
				\mathbb{K}_b^{[n]}(x_M,y)
			\end{bNiceMatrix}  \\
			= -\frac{1}{\tau_n}\begin{vNiceMatrix}
				\mathbb{A}_n(x_1) & \Cdots & \mathbb{A}_{n+M-2}(x_1) & \mathbb{K}_b^{[n]}(x_1,y) \\[3pt]
				\mathbb{A}_n(x_2) & \Cdots & \mathbb{A}_{n+M-2}(x_2) & \mathbb{K}_b^{[n]}(x_2,y) \\
				\Vdots & & \Vdots & \Vdots \\
				\mathbb{A}_n(x_M) & \Cdots &  \mathbb{A}_{n+M-2}(x_M) & \mathbb{K}_b^{[n]}(x_M,y) 
			\end{vNiceMatrix}, 
			\end{multlined}\\
		\begin{multlined}[t][.9\textwidth]
				\begin{bNiceMatrix} 0 & \Cdots & 0 & 1 \end{bNiceMatrix}\begin{bmatrix}
				\Omega_{M}
			\end{bmatrix} \begin{bmatrix}
				\hat{B}(y)
			\end{bmatrix} \\
			= \begin{bNiceMatrix} 0 & \Cdots & 0 & 1 \end{bNiceMatrix}\begin{bNiceMatrix}
				\Omega_{n,n-1} & \Cdots & &\Omega_{n,n-M} \\
				&   &&0 \\
				\Vdots & \Iddots[shorten-end=-4pt]&\Iddots[shorten-end=10pt]&\Vdots\\
				\Omega_{n+M-1,n-1}&0&\Cdots& 0
				\end{bNiceMatrix}
			\begin{bNiceMatrix}
				\hat{B}_{n-1}^{(b)}(y) \\
				\Vdots \\
				\hat{B}_{n-M+1}^{(b)}(y) \\[3pt]
				\hat{B}_{n-M}^{(b)} (y)
			\end{bNiceMatrix}  \\
			= \Omega_{n+M-1,n-1}\hat{B}^{(b)}_{n-1}(y).
		\end{multlined}
		\end{gather*}
		The last expression in combination with Equation \eqref{Omegan+MMP} leads to Equation \eqref{ConexDefinitivaBMP}.
		
		We can expand Equation \eqref{FormulasConexAMP} into entries to obtain:
		\begin{align*}
			\left[R(x)\hat{A}(x)\right]_{n+1,a} & = \sum_{\Bar{a}=1}^p \left(R(x)\right)_{a,\Bar{a}}\hat{A}_n^{(\Bar{a})}(x) = A_n^{(a)}(x) + \begin{bNiceMatrix}
				A_{n+1}^{(a)}(x) & \Cdots & A_{n+M-1}^{(a)}(x) & A_{n+M}^{(a)}(x)
			\end{bNiceMatrix} \begin{bNiceMatrix}
				\Omega_{n+1,n} \\
				\Vdots \\
				\Omega_{n+M-1,n} \\
				\Omega_{n+M,n}
			\end{bNiceMatrix} \\
			& = A_n^{(a)}-\begin{bNiceMatrix}
				A_{n+1}^{(a)}(x) & \Cdots & A_{n+M}^{(a)}(x)
			\end{bNiceMatrix}\begin{bNiceMatrix}
				\mathbb{A}_{n+1}(x_1) & \Cdots & \mathbb{A}_{n+M}(x_1) \\
				\Vdots &  & \Vdots \\
				\mathbb{A}_{n+1}(x_M) & \Cdots & \mathbb{A}_{n+M}(x_M) 
			\end{bNiceMatrix}^{-1} \begin{bNiceMatrix}
				\mathbb{A}_{n}(x_1) \\
				\Vdots \\
				\mathbb{A}_{n}(x_M)
			\end{bNiceMatrix} \\
			& = \frac{1}{\tau_{n+1}}\begin{vNiceMatrix}
				\mathbb{A}_{n+1}(x_1) & \Cdots & \mathbb{A}_{n+M}(x_1) & \mathbb{A}_{n}(x_1) \\
				\Vdots &  & \Vdots & \Vdots \\
				\mathbb{A}_{n+1}(x_M) & \Cdots & \mathbb{A}_{n+M}(x_M) & \mathbb{A}_{n}(x_1) \\
				A_{n+1}^{(a)}(x) & \Cdots & A_{n+M}^{(a)}(x) & A_{n}^{(a)}(x)
			\end{vNiceMatrix},
		\end{align*}
		rearranging rows and columns yields the mentioned result.
	\end{proof}
\begin{Remark}
    As will be shown in section \S \ref{S:Criteria}, a construction of the form $R(x)\hat{A}(x)=A(x)\Omega$ is divisible by $R(x)$. Although no explicit formula can be obtained for the adjugate matrix of $R(x)$, in the general case, the following decomposition will always be possible:
    \begin{equation*}
     \hat{A}_n^{(a)}(x)  = \frac{1}{\det R(x)\, \tau_n} \sum_{\Bar{a}=1}^p \left( \mathrm{adj} \, R(x) \right)_{a,\Bar{a}}\begin{vNiceMatrix}
				A_n^{(\Bar{a})}(x) & \Cdots & A_{n+M-1}^{(\Bar{a})}(x) & A_{n+M}^{(\Bar{a})}(x) \\
				\mathbb{A}_n(x_1) & \Cdots & \mathbb{A}_{n+M-1}(x_1) & \mathbb{A}_{n+M}(x_1) \\
				\Vdots & & \Vdots & \Vdots \\
				\mathbb{A}_n(x_M) & \Cdots &  \mathbb{A}_{n+M-1}(x_M) & \mathbb{A}_{n+M}(x_M)
		\end{vNiceMatrix}
    \end{equation*}
\end{Remark}
	\subsection{Eigenvalues with Arbitrary Multiplicity} \label{S:ArbitraryMultiplicity}
		Previously, it was imposed that the zeros of $R(x)$ were simple. However, this is a too restrictive condition; for any matrix polynomial as in condition (\ref{CondicionesMatricesLideresFinales}), the proposed method can be applied. 
	We recover the notation used in Proposition \ref{SmithForm}, that is, $x_i$, with $i\in\{1,\cdots, M\}$, denote the distinct zeros of $\det R(x)$, $\kappa_{i,j}$ represents partial multiplicities, and $K_i$ denotes the multiplicity of $x_i$ as a zero of $\det R(x)$, then:
	\[ 
	\begin{aligned}
		K_i &= \sum_{j=1}^{p} \kappa_{i,j}, & \sum_{i=1}^{M}\sum_{j=1}^p \kappa_{i,j} &= Np - r,
	\end{aligned}
	\]
	and the partial multiplicities may assume zero values in certain scenarios. In the first section, it was already explained how, from a zero of the determinant of $R(x)$, $x_i$, we can construct $s_i$ Jordan chains (with $s_i \leq p$) of length $\kappa_{i,j}$. To generalize the result, we will simply see that:
	
	\begin{Proposition}
		If $\{\boldsymbol{v}_{i,0},\: \boldsymbol{v}_{i,1}, \cdots, \boldsymbol{v}_{i,\kappa_i-1} \}$ is a Jordan chain of $R(x)$ corresponding to the eigenvalue $\lambda$, it will also be so for $\mathcal A_n(x)\Omega$. 
	\end{Proposition}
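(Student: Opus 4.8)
The plan is to deduce the claim from the connection formula \eqref{FormulasConexAMP}, $A(x)\Omega=R(x)\hat A(x)$, by propagating the defining relations of a Jordan chain through the Leibniz rule. Since $A(x)\Omega$ is a $p\times\infty$ object while the Jordan chain relations are stated for square matrix polynomials, I would first observe that \eqref{FormulasConexAMP} holds column by column: the $n$-th column of $A(x)\Omega$ equals $R(x)$ times the $n$-th column of $\hat A(x)$, and that column of $A(x)\Omega$ is itself the combination $\sum_{k}\Omega_{k,n}$ times the columns of $A$. Fixing such a column and writing $\lambda=x_i$, $\kappa=\kappa_i$, $\boldsymbol v_l=\boldsymbol v_{i,l}$, and $\mathcal A(x)$ for that column of $A(x)\Omega$, it then suffices to check that the vectors satisfy
\[
\sum_{l=0}^{j}\frac{1}{l!}\,\boldsymbol v_{j-l}\,\mathcal A^{(l)}(\lambda)=0,\qquad j=0,\dots,\kappa-1,
\]
knowing only that $\sum_{l=0}^{j}\frac{1}{l!}\boldsymbol v_{j-l}R^{(l)}(\lambda)=0$ for the same range of $j$.

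For the core computation I would expand $\mathcal A^{(l)}=\sum_{m=0}^{l}\binom{l}{m}R^{(m)}\hat A^{(l-m)}$ by Leibniz, insert it into the target sum, and use $\frac{1}{l!}\binom{l}{m}=\frac{1}{m!\,(l-m)!}$ to get
\[
\sum_{l=0}^{j}\frac{1}{l!}\boldsymbol v_{j-l}\mathcal A^{(l)}(\lambda)=\sum_{l=0}^{j}\sum_{m=0}^{l}\frac{1}{m!\,(l-m)!}\,\boldsymbol v_{j-l}\,R^{(m)}(\lambda)\,\hat A^{(l-m)}(\lambda).
\]
The decisive move is the change of summation index $s=l-m$, which regroups the double sum as
\[
\sum_{s=0}^{j}\frac{1}{s!}\Bigl(\sum_{m=0}^{j-s}\frac{1}{m!}\,\boldsymbol v_{(j-s)-m}\,R^{(m)}(\lambda)\Bigr)\hat A^{(s)}(\lambda),
\]
so that the bracketed factor is exactly the Jordan chain relation of $R(x)$ at index $j-s$. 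Since $0\le j-s\le j\le\kappa-1$, each bracket vanishes by hypothesis, and therefore the whole expression is zero for every $j$, which is precisely the statement that $\{\boldsymbol v_{i,0},\dots,\boldsymbol v_{i,\kappa_i-1}\}$ is a Jordan chain of $A(x)\Omega$ at $\lambda$.

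The obstacles here are bookkeeping rather than conceptual: the delicate point is the factorial/binomial accounting in the re-indexing and the verification that every inner sum that arises is a genuine chain relation of $R$ in the admissible range $\{0,\dots,\kappa-1\}$, which is automatic because $j-s$ never exceeds $j$. I would also stress the logical role of this proposition: combined with the divisibility criterion of Theorem \ref{Theorem}, the coincidence of Jordan chains just established shows that $R(x)$ is a left divisor of $A(x)\Omega$, which is the input needed in \S\ref{S:Criteria} to guarantee that $\hat A(x)=R(x)^{-1}A(x)\Omega$ is polynomial, and hence that the perturbed family exists. For that application one should run the same computation directly on the polynomial $A(x)\Omega$, so that the Jordan chain condition collapses to the single linear relation $\sum_{k}\Omega_{k,n}\bigl(\sum_{l=0}^{j}\tfrac{1}{l!}\boldsymbol v_{j-l}A^{(l)}_{\,\cdot,k}(\lambda)\bigr)=0$ among the contractions of the columns of $A$ with the chain; this relation holds by the very definition of the entries of $\Omega$ (the natural generalization of the system \eqref{SistemaQMP} to partial multiplicities), and it avoids presupposing that $\hat A$ is analytic at $\lambda$, keeping the argument non-circular.
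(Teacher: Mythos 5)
Your proof is correct and is essentially the paper's own argument: both differentiate the connection formula $A(x)\Omega=R(x)\hat A(x)$ $l$ times via the Leibniz rule, contract with the chain vectors $\boldsymbol v_{i,\kappa_i-1-l}$, and regroup the resulting double sum so that the coefficient of each $\hat A^{(s)}(\lambda)$ is exactly a Jordan chain relation of $R(x)$ at index $j-s$ and hence vanishes --- your re-indexing $s=l-m$ is simply a tidier bookkeeping of the paper's term-by-term display. Your closing remark on circularity is also consistent with the paper's structure, which states this proposition under the standing assumption that the perturbed family exists and, in \S\ref{S:Criteria}, separately establishes that $A(x)\Omega$ inherits the eigenvector relations directly from the $\tau$-determinant expressions for the entries of $\Omega$, precisely so that the divisibility argument there does not presuppose $\hat A$.
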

	
	\begin{proof}
		From Equation \eqref{FormulasConexAMP}, we can differentiate it $l$ times: 
		\begin{align*}
			\frac{\d^l A(x)}{\d x^l}\Omega = \frac{\d^l}{\d x^l}\left[ R(x) \widetilde{A}(x)\right] = \sum_{j=0}^l \binom{l}{j}R^{(l-j)}(x)\widetilde{A}^{(j)}(x), 
		\end{align*}
		where now the superscript $(j)$ refers to the $j$-th derivative (previously, this superscript was used in another context). We can evaluate the expression at $\lambda$, act from the left by the corresponding eigenvector $\boldsymbol{v}_{\kappa_{i}-l}$ and sum over $l$:
		\begin{align*}
			\sum_{l=0}^L \frac{1}{l!} \boldsymbol{v}_{\kappa_{i}-1-l} A^{(l)}(\lambda)\Omega  =& \sum_{l=0}^L \frac{1}{l!}\sum_{j=0}^l \binom{l}{j} \boldsymbol{v}_{\kappa_{i}-1-l}R^{(l-j)}(\lambda)\widetilde{A}^{(j)}(\lambda) \\ 
			=& \frac{1}{L!}\binom{L}{L}\boldsymbol{v}_0R^{(0)}(\lambda)\widetilde{A}^{(L)}(\lambda) + \\
			& + \left[ \frac{1}{L!}\binom{L}{L-1}\boldsymbol{v}_0R^{(1)}(\lambda)+\frac{1}{(L-1)!}\binom{L-1}{L-1}\boldsymbol{v}_1R^{(0)}(\lambda) \right]\widetilde{A}^{(L-1)}(\lambda) \\
			& \; \vdots \\
			& + \left[ \sum_{l=0}^L \frac{1}{l!} \binom{l}{0} \boldsymbol{v}_{\kappa_i-1-l}R^{(l)}(\lambda)\right]\widetilde{A}^{(0)}(\lambda) \\
			= &\frac{1}{L!} \left[ \boldsymbol{v}_0R^{(0)}(\lambda)\right]\widetilde{A}^{(L)}(\lambda) + \\
			& + \frac{1}{(L-1)!} \left[ \boldsymbol{v}_0R^{(1)}(\lambda)+\boldsymbol{v}_1R^{(0)}(\lambda)\right]\widetilde{A}^{(L-1)}(\lambda)  \\
			& \;\vdots \\
			& + \left[ \sum_{l=0}^L \frac{1}{l!}\boldsymbol{v}_{\kappa_i-1-l}R^{(l)}(\lambda)\right]\widetilde{A}^{(0)}(\lambda) = 0 . 
		\end{align*}
		Each term separately vanishes, and the result is proven for $L\in\{0,\cdots \kappa_i-1\}$.
	\end{proof}
	Associated with a zero of the determinant, $x_i$, we have a canonical set of Jordan chains of length $K_i$: 
	\[ 
	\{ \boldsymbol{v}_{1,0},\boldsymbol{v}_{1,1}, \dots, \boldsymbol{v}_{1,\kappa_1-1}, \boldsymbol{v}_{2,0}, \boldsymbol{v}_{2,1}, \dots,\boldsymbol{v}_{2,\kappa_2-1}, \dots,\: \boldsymbol{v}_{s,0},\boldsymbol{v}_{s,1}, \cdots, \boldsymbol{v}_{s,\kappa_s-1} \}, 
	\]
	where we have already demonstrated that each Jordan chain of length $\kappa_{i,j}-1$ for $j\in\{1,\cdots,s\}$ of the matrix $R(x)$ also holds for $A(x)\Omega$. We can now construct vectors of length $\kappa_{i,j}$ as follows: 
	\begin{equation*}
		\mathbb{A}_{n;j}(x_i) = \begin{bNiceMatrix}
		\displaystyle	\sum_{a=1}^p \boldsymbol{v}_{j,0;a}A_n^{(a)}(x_i) & \Cdots & \displaystyle\sum_{l=0}^{\kappa_{i,j}-1}\dfrac{1}{l!}\displaystyle\sum_{a=1}^p \boldsymbol{v}_{1,\kappa_{i,j}-1-l;a}\left.\dfrac{\d^l A_n^{(a)}}{\d x^l}\right|_{{x}=x_i}
		\end{bNiceMatrix}^\top.
	\end{equation*}
	From which we define a vector of length $K_i$: 
	\begin{equation*}
		\mathbb{A}_{n}(x_i) = \begin{bNiceMatrix}
			\mathbb{A}_{n;1}(x_i) & \mathbb{A}_{n;2}(x_i) & \Cdots & \mathbb{A}_{n;s}(x_i)
		\end{bNiceMatrix}^\top.
	\end{equation*}
	Recalling that
	\begin{equation*}
		\begin{bNiceMatrix}
			K_{1,b}^{[n]}(x,y) \\[3pt]
			K_{2,b}^{[n]}(x,y) \\
			\Vdots \\
			K_{p,b}^{[n]}(x,y) \\
		\end{bNiceMatrix} = \sum_{j=0}^{n-1}\begin{bNiceMatrix}
			A_j^{(1)}(x) \\[3pt]
			A_j^{(2)}(x) \\
			\Vdots \\
			A_j^{(p)}(x) \\
		\end{bNiceMatrix}B_j^{(b)}(y),
	\end{equation*}
 the generalization of the previous notation for this case is immediate (note that the $B_j^{(b)}(y)$ depend only on $y$ and thus are not affected when deriving with respect to $x$ in the construction of the Jordan chains). In terms of these vectors, the results presented throughout this article are expressed similarly (bearing in mind that now the determinants are $(Np-r )\times (Np-r)$ and not just $M \times M$). For example, the  $\tau$ determinants will be of the form: 
	\begin{equation*}
		\tau_n \coloneq  \begin{vNiceMatrix}
			\mathbb{A}_n (x_1) & \mathbb{A}_{n+1} (x_1) & \Cdots & \mathbb{A}_{n+Np-r-1}(x_1) \\
			\mathbb{A}_n (x_2) & \mathbb{A}_{n+1} (x_2) & \Cdots & \mathbb{A}_{n+Np-r-1}(x_2) \\
			\Vdots & \Vdots & \Ddots & \Vdots \\
			\mathbb{A}_n (x_M) & \mathbb{A}_{n+1} (x_M) & \Cdots & \mathbb{A}_{n+Np-r-1}(x_M)    
		\end{vNiceMatrix}.
	\end{equation*}
	\subsection{Left perturbations}
	Next, we will consider perturbations of the  matrix of  measures  by left multiplication with another matrix polynomial:
	\begin{equation} \label{multporizq}
		\d\hat{\mu}(x) = L(x)\d\mu(x).
	\end{equation}
	It will not be necessary to repeat the same arguments as before to ensure the existence of perturbed orthogonality (with the condition $\tau_n \neq 0$), since everything done so far is easily generalizable to this other case. The moment matrix for the family of initial polynomials had a Gauss-Borel factorization, given by:
	\begin{equation*}
		\mathcal{M} = \int X_{[q]}(x) \d\mu (x) X_{[p]}^\top (x) \: dx = S^{-1}H\Bar{S}^{-\top},
	\end{equation*}
	from which the matrix polynomials could be defined:
	\begin{align*}
		A(x) & = X_{[p]}^\top (x) \Bar{S}^\top H^{-1}, \\
		B(x) & = S X_{[q]}(x).
	\end{align*}
	The biorthogonality relation for these matrix polynomials is as follows:
	\begin{equation*}
		\int B(x) \d\mu(x)A(x) = I.
	\end{equation*}
	We can now consider a new moment matrix:
	\begin{equation*}
		\mathcal{M}^\top = \mathcal{M}' \: \Rightarrow \: \d\mu^\top (x) = \d\mu'(x),
	\end{equation*}
	and redefine the matrices of the Gauss-Borel factorization,
	\begin{equation*}
		\Bar{S}=S', \: S = \Bar{S}' \: \Rightarrow \: A'(x) = X_{[p']}^\top (x) \Bar{S}' H^{-1}, \: B'(x) = S'X_{[q']}(x),
	\end{equation*}
	where $p' = q$ and $q' = p$. Nothing changes if we transpose the roles played by the polynomials $A(x)$ and $B(x)$ (by properly transposing the weight matrix). Therefore, if we consider Equation \eqref{multporizq} as follows:
	\begin{equation*}
		\d\hat{\mu}(x) = L(x) \d\mu'(x) = \d\mu^\top(x) R(x),
	\end{equation*}
	the orthogonality for this perturbed weight matrix is demonstrated if $R(x) = L^\top (x)$ is a matrix polynomial as studied in conditions \eqref{CondicionesMatricesLideresFinales} and $\tau_n \neq 0$.
	
	For convenience, we will give explicit formulas and results for this case, as well as some guidelines for obtaining perturbed polynomials without having to transpose the relations. Let's start by studying the matrix polynomials $L(x)$; we will consider matrices of the form:
	\begin{align*}
\begin{aligned}
			L(x) & = \sum_{l=0}^N L_l x^l , & L_l \in \mathbb{C}^{p \times p},
\end{aligned}
	\end{align*}
	whose leading matrices are
	\begin{align*}
		L_{N}  & = \begin{bmatrix}
			0_{r\times (q-r)} & 0_{r} \\ \\
			I_{(q-r)} & 0_{(q-r) \times r}
		\end{bmatrix}, & L_{N-1}  & = \begin{bmatrix}
			\left[ L^1_{N-1} \right]_{r\times (q-r)} & I_{r} \\ \\
			\left[ L^3_{N-1} \right]_{(q-r)} & \left[ L^4_{N-1} \right]_{(q-r) \times r}
		\end{bmatrix}.
	\end{align*}
	For this case, the determinant of the matrix $L(x)$ is of degree $qN-r$. The relations between the moment matrices are given by:
	\begin{equation*}
		\hat{\mathcal{M}} = L\left( \Lambda_{[q]} \right) \mathcal{M}. 
	\end{equation*}
\begin{Definition}
		We define a matrix $\Omega$, through $L\left( \Lambda_{[q]} \right)$ and the matrices of the Gauss--Borel factorization:
	\begin{equation*}
		\Omega\coloneq  \hat{S}L\left( \Lambda_{[q]} \right)S^{-1} = \hat{H}\hat{\Bar{S}}^{-\top}\Bar{S}^\top H^{-1},
	\end{equation*}
\end{Definition}
	where now the matrix $\Omega$ is upper triangular with $qN-r$ upper-diagonals. 
\begin{Proposition}
		The connection formulas between the initial and perturbed polynomials are given by:
\[	\begin{aligned}
		\Omega B(x) & = \hat{B}(x)L(x), & \hat{A}(x)\Omega & = A(x),
	\end{aligned}\]
	and between the kernel CD polynomials:
	\begin{multline*}
		 \hat{K}^{[n]}(x,y) L(y) = K^{[n]}(x,y) + 
	 \begin{bNiceMatrix}
			\hat{A}_{n-1}^{(1)}(x) & \Cdots & \hat{A}_{n-qN-r}^{(1)}(x) \\
			\Vdots &  & \Vdots \\
			\hat{A}_{n-1}^{(p)}(x) & \Cdots & \hat{A}_{n-qN-r}^{(p)}(x) \\
		\end{bNiceMatrix}\\\times
	\begin{bNiceMatrix}[columns-width=1.35cm]
		\Omega_{n-1,n} & \Cdots & &\Omega_{n-1,n+qN-r-1}  \\
		&   &&0 \\
		\Vdots & \Iddots[shorten-end=-4pt]&\Iddots[shorten-end=10pt]&\Vdots\\[60pt]
		\Omega_{n-qN-r,n}&0&\Cdots& 0
	\end{bNiceMatrix} \begin{bNiceMatrix}
			B_n^{(1)}(y) & \Cdots & B_n^{(q)}(y) \\
			\Vdots & & \Vdots \\
			B_{n+qN-r-1}^{(1)}(y) & \Cdots & B_{n+qN-r-1}^{(q)}(y)
		\end{bNiceMatrix}.
	\end{multline*}
\end{Proposition}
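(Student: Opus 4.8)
The plan is to mirror the right-multiplication arguments of \S\ref{Section: General}, transposing every role: where before $\Omega$ was lower unitriangular and multiplied $A$ on the right and $\hat B$ on the left, here $\Omega$ is upper triangular and will multiply $B$ on the left and $\hat A$ on the right. The single structural ingredient needed is the intertwining relation
\[
X_{[q]}(x)L(x)=L\left(\Lambda_{[q]}\right)X_{[q]}(x),
\]
the left analogue of $R(x)X_{[p]}^\top=X_{[p]}^\top R(\Lambda_{[p]}^\top)$. It holds because block $i$ of either side equals $x^i L(x)$, using $\Lambda_{[q]}^l X_{[q]}(x)=x^l X_{[q]}(x)$; this is precisely the relation already invoked to obtain $\hat{\mathcal M}=L(\Lambda_{[q]})\mathcal M$.

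First I would establish the two polynomial connection formulas directly from the definitions. Using $B(x)=SX_{[q]}(x)$, the computational form $\Omega=\hat S L(\Lambda_{[q]})S^{-1}$, the intertwining relation, and $\hat B(x)=\hat S X_{[q]}(x)$, one reads off
\[
\Omega B(x)=\hat S L(\Lambda_{[q]})S^{-1}SX_{[q]}(x)=\hat S L(\Lambda_{[q]})X_{[q]}(x)=\hat S X_{[q]}(x)L(x)=\hat B(x)L(x).
\]
For the other identity I would instead use the factorized form $\Omega=\hat H\hat{\Bar S}^{-\top}\Bar S^\top H^{-1}$ together with $\hat A(x)=X_{[p]}^\top(x)\hat{\Bar S}^\top\hat H^{-1}$ and $A(x)=X_{[p]}^\top(x)\Bar S^\top H^{-1}$, whereupon the inner factors telescope:
\[
\hat A(x)\Omega=X_{[p]}^\top(x)\hat{\Bar S}^\top\hat H^{-1}\hat H\hat{\Bar S}^{-\top}\Bar S^\top H^{-1}=X_{[p]}^\top(x)\Bar S^\top H^{-1}=A(x).
\]
The equality of the two presentations of $\Omega$ is the content of the preceding Definition, so no further justification is needed.

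Next I would treat the kernel by the commutator trick. Writing $\hat K^{[n]}(x,y)=\hat A(x)\Pi_n\hat B(y)$ and right-multiplying by $L(y)$, the formula $\hat B(y)L(y)=\Omega B(y)$ converts this into $\hat A(x)\Pi_n\Omega B(y)$. Inserting $\Pi_n\Omega=\Omega\Pi_n+[\Pi_n,\Omega]$ and applying $\hat A(x)\Omega=A(x)$ yields
\[
\hat K^{[n]}(x,y)L(y)=A(x)\Pi_n B(y)+\hat A(x)[\Pi_n,\Omega]B(y)=K^{[n]}(x,y)+\hat A(x)[\Pi_n,\Omega]B(y),
\]
so everything reduces to identifying the commutator term.

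The main obstacle, exactly as in \eqref{TruncaciónOmegaMP}--\eqref{KernerlConexMP}, is the explicit evaluation of $[\Pi_n,\Omega]$ and its repackaging as a triple matrix product. Since $\Omega$ is upper triangular with $qN-r$ superdiagonals, a routine entrywise check shows that $[\Pi_n,\Omega]_{i,j}=\Omega_{i,j}$ precisely when $i<n\le j$ with $j-i\le qN-r$, and vanishes otherwise; hence the nonzero part is the anti-triangular block occupying rows $i\in\{n-qN-r,\dots,n-1\}$ and columns $j\in\{n,\dots,n+qN-r-1\}$. Writing $\hat A(x)[\Pi_n,\Omega]B(y)=\sum_{i,j}\hat A_i(x)\,\Omega_{i,j}\,B_j(y)$ and grouping the column vectors $\hat A_i(x)=(\hat A_i^{(1)},\dots,\hat A_i^{(p)})^\top$, the scalar entries $\Omega_{i,j}$, and the row vectors $B_j(y)=(B_j^{(1)},\dots,B_j^{(q)})$ reproduces the three displayed factors, the zeros below the anti-diagonal coming from the band constraint $j-i\le qN-r$. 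Finally I would record the edge convention for $n<qN-r$: set $\Omega_{i,j}=0$ and $\hat A_i^{(a)}=0$ whenever $i<0$, so that the $\Omega$-block shrinks to size $n\times n$ and the range of the $\hat A_i$ stops at $i=0$, exactly as in the simple-eigenvalue case. No genuinely new difficulty arises beyond careful index bookkeeping in the upper-triangular rather than lower-triangular setting.
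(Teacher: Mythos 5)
Your proof is correct and takes essentially the approach the paper intends: the paper states this proposition without proof, having remarked earlier that the right-multiplication arguments generalize directly, and your mirrored computation (the intertwining relation $X_{[q]}(x)L(x)=L(\Lambda_{[q]})X_{[q]}(x)$, the telescoping of the two Gauss--Borel presentations of $\Omega$, and the commutator trick with $\Pi_n\Omega=\Omega\Pi_n+[\Pi_n,\Omega]$) is precisely that generalization carried out in detail. One negligible bookkeeping slip in your closing remark: for $n<qN-r$ the truncated $\Omega$-block is $n\times(qN-r)$ (rows $0,\dots,n-1$, columns $n,\dots,n+qN-r-1$), not $n\times n$, mirroring the $M\times n$ convention of the right-sided case.
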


	Similarly to how left Jordan chains were initially studied for a given matrix polynomial, the study of right Jordan chains is completely analogous for this case. 
	\begin{Definition}
	Assuming simple eigenvalues, 	for left perturbations we introduce the following notation
	\begin{align*}
		\mathbb{B}_n(y_i) & = \left( B(y_i)\boldsymbol{v}_i \right)_{n+1} = \sum_{b=1}^qB_n^{(b)}(y_i)v_{i,b}, & \mathbb{K}_a^{[n]}(x,y_i) = \sum_{b=1}^qK_{a,b}^{[n]}(x,y_i)v_{i,b}, 
	\end{align*}
	\begin{equation*}
		\tau_n = \begin{vNiceMatrix}
			\mathbb{B}_n(y_1) & \Cdots & \mathbb{B}_n(y_M) \\
			\Vdots & & \Vdots \\
			\mathbb{B}_{n+M-1}(y_1) & \Cdots & \mathbb{B}_{n+M-1}(y_M)
		\end{vNiceMatrix},
	\end{equation*}
	where for simplicity we identify one eigenvector with its corresponding eigenvalue. 
	\end{Definition}
	More general cases can be easily generalized by introducing more notation. 
	
	With all this, 
	\begin{Theorem}[Christoffel formulas]
		The left perturbed matrix polynomials entries are:
	\begin{align*}
		\hat{A}^{(a)}_{n-1} & = \frac{1}{\tau_{n-1}}\begin{vNiceMatrix}
			\mathbb{K}_a^{[n]}(x,y_1) & \mathbb{K}_a^{[n]}(x,y_2) & \Cdots & \mathbb{K}_a^{[n]}(x,y_M) \\
			\mathbb{B}_n(y_1) & \mathbb{B}_n(y_2) & \Cdots & \mathbb{B}_n(y_M) \\
			\Vdots & \Vdots & & \Vdots \\
			\mathbb{B}_{n+M-1}(y_1) & \mathbb{B}_{n+M-1}(y_2) &\Cdots & \mathbb{B}_{n+M-1}(y_M)
		\end{vNiceMatrix}, \\ \\ \left[ \hat{B}(x)L(x) \right]_{n+1,b} & = \frac{1}{\tau_{n+1}}\begin{vNiceMatrix}
			B_n^{(b)}(x) & \mathbb{B}_n(y_1) & \Cdots & \mathbb{B}_n(y_M) \\[3pt]
			B_{n+1}^{(b)}(x) & \mathbb{B}_{n+1}(y_1) & \Cdots & \mathbb{B}_{n+1}(y_M) \\
			\Vdots & \Vdots & & \Vdots \\
			B_{n+M}^{(b)}(x) & \mathbb{B}_{n+M}(y_1) & \Cdots & \mathbb{B}_{n+M}(y_M)
		\end{vNiceMatrix}.
	\end{align*}
	\end{Theorem}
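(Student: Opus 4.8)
The plan is to reduce to the already-established Christoffel formulas of Theorem~\ref{Theorem:Christoffel_Formulas} via the duality set up at the start of this subsection, and then to record the direct argument. Under the transposition $\mathcal{M}'=\mathcal{M}^\top$, $\bar{S}=S'$, $S=\bar{S}'$, $R=L^\top$, the left perturbation $\d\hat{\mu}=L(x)\d\mu$ becomes a right perturbation of the primed data with the roles of $A$ and $B$ and of $p$ and $q$ interchanged; concretely $A'(x)=B(x)^\top H^{-1}$ and $B'(x)=HA(x)^\top$, the right eigenvectors $L(y_i)\boldsymbol{v}_i=0$ become the left eigenvectors $\boldsymbol{v}_i^\top R(y_i)=0$ of the primed problem, and $M=qN-r$ is the number of (simple) zeros of $\det L$. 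Transposing the primed instance of \eqref{ConexDefinitivaBMP} should yield the formula for $\hat{A}^{(a)}_{n-1}(x)$, and transposing the primed instance of \eqref{ConexDefinitivaAMP} the formula for $[\hat{B}(x)L(x)]_{n+1,b}$.

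Carried out directly, I would first extract the band entries of the upper triangular matrix $\Omega$. Evaluating the connection formula $\Omega B(x)=\hat{B}(x)L(x)$ at $x=y_i$ and multiplying on the right by $\boldsymbol{v}_i$ annihilates the right-hand side, so $\Omega\,(B(y_i)\boldsymbol{v}_i)=0$; reading off a fixed row and using $\mathbb{B}_n(y_i)=(B(y_i)\boldsymbol{v}_i)_{n+1}$ together with the fact that $\Omega$ has only $M$ superdiagonals gives, for each $i$, a scalar relation among $\mathbb{B}_n(y_i),\dots,\mathbb{B}_{n+M}(y_i)$. Collecting the $M$ eigenvalues produces an inhomogeneous linear system for the off-diagonal band entries in a given row, dual to \eqref{SistemaQMP}, whose Cramer solution expresses them as ratios of the $\tau$ determinants. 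For $\hat{A}^{(a)}_{n-1}(x)$ I would start from the kernel connection formula of the preceding Proposition, evaluate at $y=y_i$, multiply on the right by $\boldsymbol{v}_i$ to kill $\hat{K}^{[n]}(x,y_i)L(y_i)\boldsymbol{v}_i$, take the $a$-th component, and then isolate $\hat{A}^{(a)}_{n-1}(x)$ by left-multiplying with the appropriate unit row vector exactly as in the passage from \eqref{KernerlConexMP} to \eqref{ConexDefinitivaBMP}; this produces the stated determinant over $\tau_{n-1}$.

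The formula for $[\hat{B}(x)L(x)]_{n+1,b}$ then follows directly from $\Omega B(x)=\hat{B}(x)L(x)$: expanding its entry of polynomial index $n$ and component $b$ gives a combination of $B_n^{(b)}(x),\dots,B_{n+M}^{(b)}(x)$ with the band entries of $\Omega$ as coefficients, and substituting the solved entries and rearranging rows and columns reproduces the asserted $(M+1)\times(M+1)$ determinant over $\tau_{n+1}$, in complete analogy with \eqref{ConexDefinitivaAMP}. The boundary range $n<M$ is handled with the convention, as in \eqref{TruncaciónOmegaMP}, that all quantities of negative index vanish.

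The one genuinely new point, and the step I expect to be the main obstacle, is the normalization of $\Omega$. In the right-sided case $\Omega=S\hat{S}^{-1}$ is lower \emph{uni}triangular, so its unit diagonal makes the coefficient of the $A$-term equal to $1$ and the Christoffel ratios come out clean. Here, by contrast, $\Omega=\hat{S}L(\Lambda_{[q]})S^{-1}=\hat{H}\hat{\bar{S}}^{-\top}\bar{S}^\top H^{-1}$ is upper triangular with diagonal $\hat{H}H^{-1}$, and the transposition introduces the weights $A'^{(a)}_m=H_m^{-1}B^{(a)}_m$, $\mathbb{A}'_m(y_i)=H_m^{-1}\mathbb{B}_m(y_i)$ and $\hat{A}^{(a)}_m=\hat{H}_m^{-1}\hat{B}'^{(a)}_m$, whereas the kernel terms carry no such weight (the $H$-factors of $A'$ and $B'$ cancelling). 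One must therefore verify that these weights combine column-by-column in the $\tau$ and kernel determinants so that the stated clean formulas hold (equivalently, that the cofactor of $B_n^{(b)}(x)$ in the displayed determinant equals $\tau_{n+1}$); tracking this cancellation is the only nontrivial bookkeeping beyond the argument of Theorem~\ref{Theorem:Christoffel_Formulas}. The case of eigenvalues of higher multiplicity is then incorporated exactly as in \S\ref{S:ArbitraryMultiplicity}, replacing single eigenvectors by canonical sets of right Jordan chains.
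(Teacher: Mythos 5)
Your overall route coincides with the paper's: the paper in fact gives \emph{no} proof of this theorem, justifying it only by the transposition dictionary set up at the start of the subsection ($\mathcal{M}'=\mathcal{M}^\top$, $S'=\bar{S}$, $\bar{S}'=S$, $H'=H$, $R=L^\top$, hence $A'(x)=B(x)^\top H^{-1}$, $B'(x)=HA(x)^\top$, right null vectors of $L$ turning into left null vectors of $R$, and $M=qN-r$), after which the formulas are simply recorded ``for convenience''. Your dictionary is correct, your mirrored direct derivation (evaluate $\Omega B(x)=\hat{B}(x)L(x)$ at $y_i$ against $\boldsymbol{v}_i$; use the kernel identity $\hat{K}^{[n]}(x,y)L(y)=K^{[n]}(x,y)+\hat{A}(x)[\Pi_n,\Omega]B(y)$; solve by Cramer as in Theorem \ref{Theorem:Christoffel_Formulas}) is the intended one, and you are right that the kernel entries carry no $H$-weight while $\mathbb{A}'_m(y_i)=H_m^{-1}\mathbb{B}_m(y_i)$ and $\hat{A}^{(a)}_m=\hat{H}_m^{-1}\hat{B}'^{(a)}_m$.

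However, the step you isolate as ``the main obstacle'' is a genuine gap, and the closure you propose would not succeed: the $H$-weights do \emph{not} cancel. Carrying them through the primed instance of \eqref{ConexDefinitivaBMP} leaves $\hat{A}^{(a)}_{n-1}=\bigl(H_{n-1}/\hat{H}_{n-1}\bigr)\tau_{n-1}^{-1}\det[\cdots]$, and through \eqref{ConexDefinitivaAMP} leaves $[\hat{B}(x)L(x)]_{n+1,b}=\bigl(\hat{H}_n/H_n\bigr)\tau_{n+1}^{-1}\det[\cdots]$; equivalently, in your direct route the relations $\sum_{k=0}^{M}\Omega_{n,n+k}\mathbb{B}_{n+k}(y_i)=0$ are \emph{homogeneous} in the $M+1$ band entries of a row, and the anchor you implicitly use, the diagonal $\Omega_{n,n}=\hat{H}_nH_n^{-1}$, is neither $1$ nor known a priori --- so checking that the cofactor of $B^{(b)}_n(x)$ equals $\tau_{n+1}$ does not normalize the system. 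The missing idea is to anchor at the \emph{other} end of the band: since $\hat{S}$ and $S^{-1}$ are lower unitriangular, the extreme $(qN-r)$-th superdiagonal of $\Omega=\hat{S}L(\Lambda_{[q]})S^{-1}$ coincides with that of $L(\Lambda_{[q]})$, and the identity blocks $I_{q-r}$ in $L_N$ and $I_r$ in $L_{N-1}$ fill it with ones, so $\Omega_{n,n+M}=1$ for every $n$. Cramer then determines all band entries over $\tau_n$ and yields as a by-product
\begin{equation*}
	\frac{\hat{H}_n}{H_n}\;=\;\Omega_{n,n}\;=\;(-1)^{M}\,\frac{\tau_{n+1}}{\tau_n},
\end{equation*}
which shows that, with the paper's monic Gauss--Borel convention, the displayed formulas hold only up to these explicit nonzero $n$-dependent constants (e.g.\ $[\hat{B}(x)L(x)]_{n+1,b}=(-1)^{M}\tau_n^{-1}\det[\cdots]$); they return the perturbed family in a rescaled, non-monic normalization. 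This slip is present in the paper's unproved statement itself --- note also that the first determinant as printed is $(M+1)\times M$: its last row should be $\mathbb{B}_{n+M-2}$, matching \eqref{ConexDefinitivaBMP}. So your instinct about where the difficulty lies was exactly right, but closing the proof requires the observation $\Omega_{n,n+M}=1$ rather than a column-by-column cancellation of $H$-weights.
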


	\section{On the Existence of Christoffel Perturbed Orthogonality}\label{S:Criteria}
	In the previous discussion, we initially assumed the existence of the family of orthogonal polynomials. Now we give sufficient  and necessary conditions for this existence. 
	We will use the following block expression 
	\begin{align*}
		A(x)=\left[\begin{NiceMatrix}
			\mathcal A_0 &\mathcal A_1 (x)&\mathcal A_2(x)&\Cdots
			\end{NiceMatrix}\right],
	\end{align*}
	where $\mathcal A_n(x)$ is a $p\times p$ matrix polynomial of degree $n$, with leading coefficient an invertible upper triangular matrix. 
	Given the mixed multiple orthogonal polynomials $A(x)$  and the previously defined matrix  $\Omega$, let us denote
	\begin{equation}
		\label{ATildeMP}    
		\begin{aligned}
			A(x) \Omega &\eqcolon \widetilde{A}(x), & \mathcal A_n(x) \Omega &\eqcolon \widetilde{\mathcal A}_n(x), 
		\end{aligned}
	\end{equation}
with the matrix polynomials $\widetilde{\mathcal A}_n(x)$ the corresponding blocks, i.e,
		\begin{align*}
		\widetilde A(x)=
	\left[	\begin{NiceMatrix}
		\widetilde{\mathcal A}_0 &\widetilde{\mathcal A}_1 (x)&\widetilde{\mathcal A}_2(x)&\Cdots
		\end{NiceMatrix}\right].
	\end{align*}

	\begin{Proposition}
	The matrix polynomials	$\widetilde{\mathcal A}_n(x)$ has, at least, the same left eigenvectors associated with the same eigenvalue as $R(x)$. 
	\end{Proposition}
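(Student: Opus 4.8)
The plan is to read the connection formula \eqref{FormulasConexAMP} blockwise. Since $R(x)$ is a $p\times p$ matrix polynomial while $\hat A(x)=\left[\hat{\mathcal A}_0\ \hat{\mathcal A}_1(x)\ \hat{\mathcal A}_2(x)\ \Cdots\right]$ is assembled from $p\times p$ blocks $\hat{\mathcal A}_n(x)$, the left product $R(x)\hat A(x)$ distributes over the blocks, so that $R(x)\hat A(x)=\left[R(x)\hat{\mathcal A}_0\ R(x)\hat{\mathcal A}_1(x)\ \Cdots\right]$. Comparing this with $A(x)\Omega=\widetilde A(x)=\left[\widetilde{\mathcal A}_0\ \widetilde{\mathcal A}_1(x)\ \Cdots\right]$ coming from \eqref{ATildeMP} and \eqref{FormulasConexAMP}, I obtain the key identity
\begin{equation*}
\widetilde{\mathcal A}_n(x)=R(x)\hat{\mathcal A}_n(x),\qquad n\in\N_0,
\end{equation*}
which exhibits $R(x)$ as a left factor of every block $\widetilde{\mathcal A}_n(x)$.

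First I would dispatch the eigenvector statement, which is immediate: if $\boldsymbol v_i$ with $\boldsymbol v_i R(x_i)=0$ is a left eigenvector of $R(x)$ at the eigenvalue $x_i$, then
\begin{equation*}
\boldsymbol v_i\widetilde{\mathcal A}_n(x_i)=\boldsymbol v_i R(x_i)\hat{\mathcal A}_n(x_i)=0,
\end{equation*}
so $\boldsymbol v_i$ is a left eigenvector of $\widetilde{\mathcal A}_n(x)$ with the same eigenvalue, proving the stated claim. To upgrade it to the full Jordan structure---that every left Jordan chain of $R(x)$ is inherited by $\widetilde{\mathcal A}_n(x)$---I would either repeat the Leibniz computation of the Proposition in \S\ref{S:ArbitraryMultiplicity} verbatim with $\widetilde{\mathcal A}_n$ in place of $A(x)\Omega$ (differentiating $\widetilde{\mathcal A}_n=R\hat{\mathcal A}_n$, left multiplying by the chain vectors $\boldsymbol v_{\kappa_i-1-l}$, summing over $l$, and using the defining chain relations of $R(x)$ to annihilate each term), or invoke the Matrix Polynomials Divisibility Theorem \ref{Theorem}: since $R(x)$ is a left divisor of $\widetilde{\mathcal A}_n(x)$, each Jordan chain of $R(x)$ coincides with one of $\widetilde{\mathcal A}_n(x)$ having the same eigenvalue.

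The only point needing care---and the place where the genuine bookkeeping sits---is the regularity hypothesis required to apply Theorem \ref{Theorem}. Here $R(x)$ is regular by standing assumption, and $\det\widetilde{\mathcal A}_n(x)=\det R(x)\,\det\hat{\mathcal A}_n(x)$; since $\hat{\mathcal A}_n(x)$, like $\mathcal A_n(x)$, has degree $n$ with invertible upper triangular leading coefficient, $\det\hat{\mathcal A}_n(x)\not\equiv 0$, so $\widetilde{\mathcal A}_n(x)$ is regular and the divisibility theorem applies. The qualifier \emph{at least} in the statement reflects that $\widetilde{\mathcal A}_n(x)$ may carry further eigenvalues and chains arising from the factor $\hat{\mathcal A}_n(x)$, which are not controlled by $R(x)$ and need not be addressed. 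If one prefers to bypass the regularity verification entirely, the direct Leibniz route establishes the Jordan-chain inheritance with no regularity hypothesis at all, and I would present that as the primary argument.
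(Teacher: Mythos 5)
Your argument is circular relative to the role this proposition plays in the paper. The identity you take as your starting point, $\widetilde{\mathcal A}_n(x)=R(x)\hat{\mathcal A}_n(x)$, is just the connection formula \eqref{FormulasConexAMP} read blockwise, and that formula was derived in \S\ref{Section: General} under the standing assumption that the perturbed family exists — that is, that $\hat{\mathcal{M}}$ admits a Gauss--Borel factorization producing $\hat S$, $\hat H$, $\hat{\bar S}$, and hence $\hat A(x)$ itself. But the present proposition sits in \S\ref{S:Criteria} as the key lemma in the sufficiency half of Theorem \ref{Theorem: Criteria}: there one assumes only $\tau_n\neq 0$, takes $\Omega$ to be the banded unitriangular matrix whose entries are \emph{defined} by the determinantal formulas \eqref{Omegan+MMP} (equivalently by the linear systems \eqref{SistemaQMP}), which involve nothing but the unperturbed polynomials evaluated on the spectrum of $R(x)$, and then uses this proposition to feed Jordan-chain data into the divisibility Theorem \ref{Theorem}, concluding that $R(x)$ is a left divisor of $\widetilde A(x)=A(x)\Omega$ and thereby \emph{constructing} the perturbed family $\widetilde{\widetilde{A\,}}(x)$ for the first time. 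Assuming $\hat A(x)$ — equivalently, assuming the factorization $\widetilde{\mathcal A}_n=R\,\hat{\mathcal A}_n$, which \emph{is} the divisibility statement to be established — presupposes the conclusion of the theorem this proposition serves. The same objection defeats your fallback ``Leibniz route,'' since it too differentiates the assumed identity $\widetilde{\mathcal A}_n=R\,\hat{\mathcal A}_n$; and your suggestion to invoke Theorem \ref{Theorem} is backwards: once $R(x)$ is exhibited as an explicit left factor there is nothing left for the divisibility theorem to do, whereas the proposition exists precisely to supply the hypotheses of that theorem when no factorization is yet available. Your regularity discussion of $\hat{\mathcal A}_n(x)$ is likewise moot, as it reasons about objects whose existence is the very point at issue.

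The paper's proof avoids $\hat A(x)$ entirely. Writing entrywise $\widetilde A_n^{(a)}(x)=A_n^{(a)}(x)+\sum_{i=0}^{M-1}\Omega_{n+1+i,n}A_{n+1+i}^{(a)}(x)$ and substituting $\Omega_{n+1+i,n}=(-1)^{i+1}\tau_n^{(i)}/\tau_{n+1}$ together with $\Omega_{n+M,n}=(-1)^{M}\tau_n/\tau_{n+1}$ from \eqref{Omegan+MMP}, then evaluating at $x=x_i$ and contracting with the left eigenvector $\boldsymbol{v}_i$, one recognizes $\tau_{n+1}\widetilde{\mathbb{A}}_n(x_i)$ as the Laplace expansion, along the row $\bigl(\mathbb{A}_n(x_i),\dots,\mathbb{A}_{n+M}(x_i)\bigr)$, of an $(M+1)\times(M+1)$ determinant whose remaining rows run over the eigenvalues $x_1,\dots,x_M$; since $x_i$ occurs among these, a row repeats and the determinant vanishes, giving $\boldsymbol{v}_i\widetilde{\mathcal A}$-type annihilation with no reference to any hatted quantity. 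To repair your proof you would need an argument of this kind — one that uses only the $\tau$-determinant definition of $\Omega$ and the unperturbed data — rather than the connection formula.
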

	
	\begin{proof} 
		Entrywise, $A(x)\Omega$ reads as follows: 
		\begin{align*}
			A_{n}^{(a)}(x) & + \Omega_{n+1,n}A_{n+1}^{(a)}(x) + \Omega_{n+2,n}A_{n+2}^{(a)}(x) + \cdots + \Omega_{n+M,n}A_{n+M}^{(a)}(x) = \widetilde{A}_n^{(a)}(x),
		\end{align*}
		that can be written as
		\begin{align*}
			A_{n}^{(a)}(x) & - \frac{\tau_n^{(0)}}{\tau_{n+1}}A_{n+1}^{(a)}(x) +\frac{\tau_n^{(1)}}{\tau_{n+1}}A_{n+2}^{(a)}(x) + \cdots + (-1)^{M}\frac{\tau_{n}}{\tau_{n+1}}A_{n+M}^{(a)}(x) = \widetilde{A}_n^{(a)}(x).
		\end{align*}
		When evaluating the expressions at $x=x_i$ and acting from the left by the corresponding eigenvector: 
		\begin{align*}
&	\begin{multlined}[t][.95\textwidth]
\sum_{a=1}^pv_{i,a}A_{n}^{(a)}(x_i) - \frac{\tau_n^{(M-1)}}{\tau_{n+1}}\sum_{a=1}^pv_{i,a}A_{n+1}^{(a)}(x_i) +\frac{\tau_n^{(M-2)}}{\tau_{n+1}}\sum_{a=1}^pv_{i,a}A_{n+2}^{(a)}(x_i) + \cdots \\+ (-1)^{M}\frac{\tau_{n}}{\tau_{n+1}}\sum_{a=1}^pv_{i,a}A_{n+M}^{(a)}(x_i)
	\end{multlined} \\
	&\begin{aligned}
				&= \sum_{a=1}^pv_{i,a}\widetilde{A}_n^{(a)}(x_i) 
			\tau_{n+1}\mathbb{A}_{n}(x_i)  - \tau_n^{(M-1)}\mathbb{A}_{n+1}(x_i) +\tau_n^{(M-2)}\mathbb{A}_{n+2}(x_i) + \cdots + (-1)^{M}\tau_{n}\mathbb{A}_{n+M}(x_i) \\&= \tau_{n+1}\widetilde{\mathbb{A}}_n(x_i)
	\end{aligned}
		\end{align*}
		The right-hand side is simply a determinant expanded by the Laplace method in the first column:
		\begin{equation*}
			\begin{vNiceMatrix}
				\mathbb{A}_n(x_i) & \mathbb{A}_{n+1}(x_i) & \mathbb{A}_{n+2}(x_i) & \Cdots & \mathbb{A}_{n+M}(x_i) \\[3pt]
				\mathbb{A}_n(x_1) & \mathbb{A}_{n+1}(x_1) & \mathbb{A}_{n+2}(x_1) & \Cdots & \mathbb{A}_{n+M}(x_1) \\
				\Vdots & \Vdots & \Vdots & & \Vdots \\
				\mathbb{A}_n(x_M) & \mathbb{A}_{n+1}(x_M) & \mathbb{A}_{n+2}(x_M) & \Cdots & \mathbb{A}_{n+M}(x_M)
			\end{vNiceMatrix} = \tau_{n+1}\widetilde{\mathbb{A}}_n(x_i).
		\end{equation*}
		This is equal to zero, since $x_i$ can be any of the $M$ roots of $R(x)$ and thus one row will always repeat. With all this, we have proven the result.
	\end{proof}
	\begin{Theorem}\label{Theorem: Criteria}
		The perturbed mixed multiple orthogonality exists if and only if  $\tau_n\neq 0$ for $n\in\N_0$.
		 \end{Theorem}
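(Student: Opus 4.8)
The plan is to reduce the stated equivalence to the Gauss--Borel factorization criterion for the perturbed moment matrix. By \eqref{MatricesMomentosMP} the existence of the perturbed mixed multiple orthogonal polynomials is exactly the existence of the factorization $\hat{\mathcal M}=\hat S^{-1}\hat H\hat{\bar S}^{-\top}$ in \eqref{GaussBorelAMP}, which by the criterion recalled in the introduction is equivalent to the non-singularity of every leading principal submatrix $\hat{\mathcal M}^{[n]}$. Since $\hat S,\hat{\bar S}$ are lower unitriangular and $\hat H$ is diagonal, triangularity gives $\det\hat{\mathcal M}^{[n]}=\prod_{k=0}^{n-1}\hat H_{k}$, so existence is equivalent to $\hat H_{k}\neq0$ for all $k\in\N_0$. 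The core of the proof is therefore to show that these diagonal entries are governed by the $\tau$ determinants. Throughout I would argue first for simple eigenvalues, where $M=Np-r$ by Proposition \ref{Prop2}, and then transfer the argument verbatim to the general case of \S\ref{S:ArbitraryMultiplicity} by replacing point evaluations with the Jordan-chain contractions $\mathbb A_{n}(x_i)$ defined there.

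\textbf{Sufficiency.} Assume $\tau_n\neq0$ for every $n\in\N_0$. Then for each column index $n-1$ the linear system \eqref{SistemaQMP} has coefficient matrix with determinant exactly $\tau_n$, hence is uniquely solvable by Cramer's rule; this produces a well-defined banded lower unitriangular matrix $\Omega$ whose entries are precisely the Christoffel expressions \eqref{Omegan+MMP}. The Proposition preceding the theorem shows $A(x)\Omega$ inherits all Jordan chains of $R(x)$, so by the divisibility Theorem \ref{Theorem} $R(x)$ is a left divisor of $A(x)\Omega$; the quotient defines genuine polynomials $\hat A(x)$ via \eqref{ConexDefinitivaAMP}, and $\hat B(x)$ via \eqref{ConexDefinitivaBMP}. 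Setting $\hat S\coloneq\Omega^{-1}S$ (lower unitriangular) and reading off $\hat H,\hat{\bar S}$ reassembles the factorization \eqref{GaussBorelAMP}; the non-vanishing of $\tau_n$ is what guarantees that each block $\widetilde{\mathcal A}_n=\mathcal A_n(x)\Omega$ has the correct degree with invertible leading coefficient, equivalently that $\hat H_n\neq0$, so the perturbed orthogonality genuinely exists.

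\textbf{Necessity.} Conversely, assume the perturbed family exists. Then $\Omega=S\hat S^{-1}$ exists and is uniquely determined, and its columns necessarily solve \eqref{SistemaQMP}. The leading coefficient of the degree-$n$ block $\widetilde{\mathcal A}_n$, equivalently the diagonal datum $\hat H_n$, is then—up to a nonzero factor fixed by the unperturbed factorization $\mathcal M=S^{-1}H\bar S^{-\top}$ and by the eigenvector normalisations—a ratio $\tau_{n+1}/\tau_{n}$ of consecutive $\tau$ determinants, matching $\Omega_{n+M,n}=(-1)^{M}\tau_n/\tau_{n+1}$ from \eqref{Omegan+MMP}. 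Were some $\tau_{n_0}=0$, this would force either a non-invertible leading coefficient for $\widetilde{\mathcal A}_{n_0-1}$ or a vanishing $\hat H_{n_0-1}$, contradicting that $\hat H$ is a non-singular diagonal matrix; the boundary value $\tau_0$ is controlled likewise by the $n=0$ instance, which fixes $\hat A_0,\hat B_0$. Hence $\tau_n\neq0$ for all $n\in\N_0$.

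\textbf{Main obstacle.} The delicate point is making the proportionality between $\hat H_n$ and $\tau_{n+1}/\tau_n$ precise: one must track the block shift by $M=Np-r$, verify that the constant relating the $(Np-r)\times(Np-r)$ determinant $\tau_n$ to the scalar minor $\det\hat{\mathcal M}^{[n]}$ is built solely from the already-nonzero data $H$, $\bar S$ and the left eigenvectors, and confirm it never degenerates. The base-case bookkeeping for $n<M$ (with the truncation conventions of \eqref{TruncaciónOmegaMP} and \eqref{KernerlConexMP}) and the passage from simple eigenvalues to arbitrary partial multiplicities, where the $M$ scalar rows are replaced by confluent Jordan-chain rows, are the technical steps requiring the most care, but they do not alter the logical skeleton above.
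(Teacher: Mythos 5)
There is a genuine gap. Your entire equivalence rests on the identity ``$\hat H_n$ equals a nonzero unit times $\tau_{n+1}/\tau_n$'' (equivalently, $\det\hat{\mathcal M}^{[n]}$ proportional to $\tau_n$ times nonvanishing data), and this identity is asserted, never proved; you yourself label it the ``main obstacle'' and defer it as bookkeeping, but it is the whole content of the theorem, not a technicality. Worse, the one piece of evidence you cite for it, $\Omega_{n+M,n}=(-1)^M\tau_n/\tau_{n+1}$ from \eqref{Omegan+MMP}, was derived from the system \eqref{SistemaQMP}, which itself presupposes the existence of the perturbed family (it comes from $A(x)\Omega=R(x)\hat A(x)$) — so in the sufficiency direction you cannot invoke it. Concretely, after defining a candidate $\Omega$ by Cramer's rule and setting $\hat S:=\Omega^{-1}S$, you must still show that $\Omega^{-1}H\bar S^{-\top}R\bigl(\Lambda_{[p]}^\top\bigr)$ is genuinely of the upper-triangular form $\hat H\hat{\bar S}^{-\top}$ with $\hat H$ nonsingular — i.e., that the quotient of $A(x)\Omega$ by $R(x)$ consists of polynomial blocks of the correct degrees with invertible upper-triangular leading coefficients. ``Reassembles the factorization'' skips exactly this. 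Your necessity argument has a subsidiary flaw of the same kind: if some $\tau_{n_0}=0$, the ratio $\tau_{n_0+1}/\tau_{n_0}$ is meaningless, and an inhomogeneous system \eqref{SistemaQMP} with singular coefficient matrix can still be solvable, so nothing on the page forces a contradiction with the nonsingularity of $\hat H$.

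The paper closes both gaps by a different mechanism that avoids computing $\det\hat{\mathcal M}^{[n]}$ altogether. For sufficiency, it shows $\widetilde A(x)=A(x)\Omega$ inherits every Jordan chain of $R(x)$, invokes the divisibility Theorem \ref{Theorem} to write $A(x)\Omega=R(x)\widetilde{\widetilde A\,}(x)$, then applies Proposition \ref{prop1} to the zeroth block relation $\mathcal A_0(x)\left[\Omega_{0,0}\right]+\cdots+\mathcal A_{pN}(x)\left[\Omega_{pN,0}\right]=R(x)\widetilde{\widetilde{\mathcal A\,}}_0(x)$ to conclude $\widetilde{\widetilde{\mathcal A\,}}_0$ is a constant upper-triangular matrix with nonzero diagonal, and finally verifies directly that $\widetilde{\widetilde A\,}$ satisfies the orthogonality relations with respect to $\d\mu\cdot R$ — this replaces your unproved $\hat H_n$ computation. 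For necessity, it assumes $\tau_n=0$, takes a nonzero left null vector $\boldsymbol c$ of the matrix $\bigl[\mathbb A_{n+j}(x_i)\bigr]$, pushes it through the kernel connection formula \eqref{KernerlConexMP}, and uses the linear independence of the $B_i^{(b)}$ to propagate the degeneracy down to $\tau_0=0$; then the subleading block $\left[\Omega_{pN-p,0}\right]$, whose entries involve the ratios $\tau_0/\tau_1,\dots$, violates \eqref{LeadingMatrixConditions}, and Corollary \ref{Corollary} shows the initial-condition matrix is singular. If you want to salvage your minor-based route, you would need to prove a Heine-type identity expressing $\det\hat{\mathcal M}^{[n]}$ (computed from $\hat{\mathcal M}=\mathcal M R(\Lambda_{[p]}^\top)$, e.g., via Cauchy--Binet, noting that $\hat{\mathcal M}^{[n]}$ draws on columns of $\mathcal M$ up to index $n+Np-r-1$) as $\det\mathcal M^{[n]}$ times $\tau_n$ times explicitly nonvanishing spectral data — a substantial argument that your proposal does not contain.
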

	\begin{proof}
First we deal with the sufficiency. Since $\widetilde{A}(x)$ and $R(x)$ share Jordan chains for each eigenvalue, according to Theorem \ref{Theorem}, $R(x)$ is a divisor of $\widetilde{A}(x)$. The matrix polynomial $\widetilde{A}(x)$ is a semi-infinite matrix with an infinite number of $p \times p$ blocks, so the divisibility should be understood between $R(x)$ and each one of these block matrices respectively. There exists another matrix polynomial $\widetilde{\widetilde{A\,}}$ that satisfies  
	\begin{equation*}
		A(x)\Omega = \widetilde{A}(x) = R(x)\widetilde{\widetilde{A\,}}(x) 
	\end{equation*}
	In terms of $p\times p$ matrices, the first block can be written as
	\begin{align*}
		\mathcal{A}_0(x)\left[\Omega_{0,0}\right]+\mathcal{A}_p(x)\left[\Omega_{p,0}\right] + \cdots + \mathcal{A}_{pN-p}(x)\left[\Omega_{pN-p,0}\right]+\mathcal{A}_{pN}(x)\left[\Omega_{pN,0}\right] = R(x) \widetilde{\widetilde{\mathcal{A\,}}}_0(x).
	\end{align*}
	By construction, the matrix $\left[ \Omega_{pN,0} \right]$ and the matrix $\left[ \Omega_{pN-p,0} \right]$ are like the leading and subleading matrices exposed in condition \eqref{LeadingMatrixConditions}, respectively. Moreover, $\mathcal{A}_{pN}(x)$ can be understood as a matrix polynomial of degree $N$ whose leading matrix is upper triangular with a nonzero determinant. The product $\mathcal{A}_{pN}(x)\left[ \Omega_{pN,0} \right]$ will be another matrix polynomial that satisfies, 
	\begin{multline*}
		\mathcal{A}_{pN}(x)\left[ \Omega_{pN,0} \right] = \begin{bmatrix}
			0_{(p-r)\times r} & \left[ t_{N} \right]_{(p-r)\times (p-r)} \\ \\
			0_{r} & 0_{r \times (p-r)}
		\end{bmatrix} x^N 
		+
		\begin{bNiceMatrix}
			0 & \Cdots & 0 & \ast & \Cdots & \ast \\ 
			\Vdots & p \times r & \Vdots & \Vdots & p \times (p-r) & \Vdots \\ 
			0 & \Cdots & 0 &  \ast & \Cdots & \ast 
		\end{bNiceMatrix}x^{N-1}+O(x^{N-2}).
	\end{multline*}
Noe that 	$\mathcal{A}_{pN-p}(x)$ is as a matrix polynomial of degree $N-1$ whose leading matrix is upper triangular with a nonzero determinant. The product $\mathcal{A}_{pN-p}(x)\left[ \Omega_{pN-p,0} \right]$ will be another matrix polynomial whose leading matrix maintains the number of lower subdiagonals. The elements of the $(p-r)$-th subdiagonal are nonzero, and below this, they will all be zero. The sum of both matrices, $\mathcal{A}_{pN-p}(x)\left[ \Omega_{pN-p,0} \right]$ and $\mathcal{A}_{pN}(x)\left[ \Omega_{pN,0} \right]$, and therefore the first block of $A(x)\Omega$ will maintain the structure presented in the conditions \eqref{LeadingMatrixConditions}, $R(x)$ satisfies the conditions \eqref{CondicionesMatricesLideresFinales}, so we can apply Proposition \ref{prop1} and determine that $\widetilde{\widetilde{\mathcal{A\,}}}_0(x)$ is a constant matrix with non-zero diagonal elements.
	
	On the other hand,   from the orthogonality relations for $A(x)$:
		\begin{align*}
		\begin{aligned}
			\sum_{a=1}^p  \int  \d\mu_{b,a}(x) A_n^{(a)}(x) x^m  &= 0, & m &\in \left\{0,\cdots,\left\lceil\frac{n-b+2}{q}\right\rceil-1\right\},
		\end{aligned}
	\end{align*}
we find for $\in \left\{0,\cdots,\left\lceil\frac{n-b+2}{q}\right\rceil-1\right\}$ that
	\begin{multline*}
	\sum_{a=1}^p  \int \d\mu_{b,a}(x)\left(A_n^{(a)}(x)+ A_{n+1}^{(a)}(x)\Omega_{n+1,n}+ \cdots+A_{n+M}^{(a)}(x)\Omega_{n+M,n}\right)x^m   \\
	= \sum_{a=1}^p  \int \d\mu_{b,a}(x)\widetilde{A}_n^{(a)}(x)x^m =0.
\end{multline*}
Now, noticing that
	\begin{align*}
		\sum_{a=1}^p \int \d\mu_{b,a}(x) \widetilde{A}_n^{(a)}(x) & = \sum_{a=1}^p  \int \left(\d\mu \cdot R\right)_{b,a}(x) \widetilde{\widetilde{A}\,}\hspace*{-1.8pt}^{(a)}_n(x).
	\end{align*}
	we 	 directly deduce the orthogonality relations for $\widetilde{\widetilde{A\,}}(x)$

	Hence,  $\widetilde{\widetilde{A\,}}(x)$ satisfy orthogonality relations:
\[	\begin{aligned}
				\sum_{a=1}^p\int \left(\d\mu\cdot R\right)_{b,a}(x)\widetilde{\widetilde{A}\,}\hspace*{-1.8pt}^{(a)}_n (x)  x^m &= 0,&
				m \in \left\{0, \cdots , \deg{B}_{n-1}^{(b)}\right\},
	\end{aligned}\]
	as well as the  initial condition for the perturbed polynomials 
		\begin{equation*}
			\widetilde{\widetilde{\mathcal A\,}}_0=	\begin{bNiceMatrix}
					\nu_0^{(1)} & \nu_1^{(1)} & \Cdots & \nu_p^{(1)}  \\[3pt]
					0 & \nu_1^{(2)} & \Cdots &\nu_p^{(2)} \\
					\Vdots &  \Ddots& \Ddots[shorten-end=-5pt] & \Vdots \\
					0 & \Cdots &  0& \nu_p^{(p)}
				\end{bNiceMatrix}.
			\end{equation*}
These implies that $\widetilde{\widetilde{A}\,}\hspace*{-1.8pt}^{(a)}_n$ is a family of perturbed mixed multiple orthogonal polynomials.

Next, we proceed to demonstrate the result in the necessity; i.e. that the condition $\tau_n = 0$ for some $n\in\N$ implies the absence of perturbed orthogonality. Beginning with Equation \eqref{KernerlConexMP} for the $b$-th entry, we apply the various eigenvectors and evaluate the expressions at their respective eigenvalues to obtain:
\begin{align*}
	-\begin{bNiceMatrix}
		\mathbb{K}_b^{[n]}(x_1,y) \\[3pt]
		\mathbb{K}_b^{[n]}(x_2,y) \\
		\Vdots \\
		\mathbb{K}_b^{[n]}(x_M,y)
	\end{bNiceMatrix} = \begin{bNiceMatrix}
		\mathbb{A}_n(x_1) & \mathbb{A}_{n+1}(x_1) & \Cdots & \mathbb{A}_{n+M-1}(x_1) \\[3pt]
		\mathbb{A}_n(x_2) & \mathbb{A}_{n+1}(x_2) & \Cdots & \mathbb{A}_{n+M-1}(x_2) \\
		\Vdots & \Vdots & & \Vdots \\
		\mathbb{A}_n(x_M) & \mathbb{A}_{n+1}(x_M) & \Cdots & \mathbb{A}_{n+M-1}(x_M) \\
	\end{bNiceMatrix}\begin{bmatrix}
		\Omega_{M}
	\end{bmatrix} \begin{bmatrix}
		\hat{B}(y)
	\end{bmatrix}_b,
\end{align*}
It is noted that we have recovered the notation introduced in the proof of Equation \eqref{ConexDefinitivaBMP}. Given that $\tau_n = 0$ has been assumed, there exists a nonzero  vector $\boldsymbol{c}=	\begin{bNiceMatrix}
	c_1 & c_2 & \Cdots & c_M
\end{bNiceMatrix}$ such that:
\begin{equation*}
	\begin{bNiceMatrix}
		c_1 & c_2 & \Cdots & c_M
	\end{bNiceMatrix}\begin{bNiceMatrix}
		\mathbb{A}_n(x_1) & \mathbb{A}_{n+1}(x_1) & \Cdots & \mathbb{A}_{n+M-1}(x_1) \\[3pt]
		\mathbb{A}_n(x_2) & \mathbb{A}_{n+1}(x_2) & \Cdots & \mathbb{A}_{n+M-1}(x_2) \\
		\Vdots & \Vdots & & \Vdots \\
		\mathbb{A}_n(x_M) & \mathbb{A}_{n+1}(x_M) & \Cdots & \mathbb{A}_{n+M-1}(x_M) \\
	\end{bNiceMatrix} = \begin{bNiceMatrix}
		0 & 0 & \Cdots & 0 
	\end{bNiceMatrix}
\end{equation*}
Regarding the CD kernel polynomials (multiplied and evaluated by the corresponding eigenvectors and eigenvalues), this condition implies that:
\begin{align*}
	\begin{bNiceMatrix}
		c_1 & c_2 & \Cdots & c_M
	\end{bNiceMatrix}\begin{bNiceMatrix}
		\mathbb{K}_b^{[n]}(x_1,y) \\[3pt]
		\mathbb{K}_b^{[n]}(x_2,y) \\
		\Vdots \\
		\mathbb{K}_b^{[n]}(x_M,y)
	\end{bNiceMatrix} =0, 
\end{align*}
that is
\[	c_1\mathbb{K}_b^{[n]}(x_1,y) + c_2\mathbb{K}_b^{[n]}(x_2,y)+\cdots+c_M\mathbb{K}_b^{[n]}(x_M,y) = 0, \]
that expands as follows 
\begin{align*}
		c_1\boldsymbol{v}_1\begin{bNiceMatrix}
		K_{1,b}^{[n]}(x_1,y) \\[3pt]
		K_{2,b}^{[n]}(x_1,y) \\
		\Vdots \\
		K_{p,b}^{[n]}(x_1,y) \\
	\end{bNiceMatrix}
	+c_2\boldsymbol{v}_2\begin{bNiceMatrix}
		K_{1,b}^{[n]}(x_2,y) \\[3pt]
		K_{2,b}^{[n]}(x_2,y) \\
		\Vdots \\
		K_{p,b}^{[n]}(x_2,y) \\
	\end{bNiceMatrix}+\cdots+c_M\boldsymbol{v}_M\begin{bNiceMatrix}
		K_{1,b}^{[n]}(x_M,y) \\[3pt]
		K_{2,b}^{[n]}(x_M,y) \\
		\Vdots \\
		K_{p,b}^{[n]}(x_M,y) \\
	\end{bNiceMatrix} = 0
\end{align*} 
and, consequently, we deduce that
\begin{align*}
	\sum_{i=0}^{n-1} \left( c_1\boldsymbol{v}_1\begin{bNiceMatrix}
		A_i^{(1)}(x_1) \\[3pt]
		A_i^{(2)}(x_1) \\
		\Vdots \\
		A_i^{(p)}(x_1) \\
	\end{bNiceMatrix}+c_2\boldsymbol{v}_2\begin{bNiceMatrix}
		A_i^{(1)}(x_2) \\[3pt]
		A_i^{(2)}(x_2) \\
		\Vdots \\
		A_i^{(p)}(x_2) \\
	\end{bNiceMatrix} + \cdots + c_M\boldsymbol{v}_M\begin{bNiceMatrix}
		A_i^{(1)}(x_M) \\[3pt]
		A_i^{(2)}(x_M) \\
		\Vdots \\
		A_i^{(p)}(x_M) \\
	\end{bNiceMatrix} \right) B_i^{(b)}(y) = 0
\end{align*}
By using the linear independence of the polynomials $B_i^{(b)}(y)$, we arrive at: 
\begin{equation*}
	c_1\boldsymbol{v}_1\begin{bNiceMatrix}
		A_i^{(1)}(x_1) \\[3pt]
		A_i^{(2)}(x_1) \\
		\Vdots \\
		A_i^{(p)}(x_1) \\
	\end{bNiceMatrix}+c_2\boldsymbol{v}_2\begin{bNiceMatrix}
		A_i^{(1)}(x_2) \\[3pt]
		A_i^{(2)}(x_2) \\
		\Vdots \\
		A_i^{(p)}(x_2) \\
	\end{bNiceMatrix} + \cdots + c_M\boldsymbol{v}_M\begin{bNiceMatrix}
		A_i^{(1)}(x_M) \\[3pt]
		A_i^{(2)}(x_M) \\
		\Vdots \\
		A_i^{(p)}(x_M) \\
	\end{bNiceMatrix} = 0, \quad i\in \{0,\cdots,n-1\}.
\end{equation*}
Upon examining the relationships for $i\in\{0,\cdots,M-1\}$, we have:
\begin{equation*}
	\begin{bNiceMatrix}
		c_1 & c_2 & \Cdots & c_M
	\end{bNiceMatrix}
	\begin{bNiceMatrix}
		\mathbb{A}_0(x_1) & \mathbb{A}_{1}(x_1) & \Cdots & \mathbb{A}_{M-1}(x_1) \\[3pt]
		\mathbb{A}_0(x_2) & \mathbb{A}_{1}(x_2) & \Cdots & \mathbb{A}_{M-1}(x_2) \\
		\Vdots & \Vdots & & \Vdots \\
		\mathbb{A}_0(x_M) & \mathbb{A}_{1}(x_M) & \Cdots & \mathbb{A}_{M-1}(x_M) \\
	\end{bNiceMatrix} = \begin{bNiceMatrix}
		0 &  \Cdots&  & 0
	\end{bNiceMatrix}.
\end{equation*}
This condition is equivalent to  $\tau_0$ being identically zero. Furthermore, upon examining the leading matrix of the following relation: 
\begin{equation*}
	\mathcal{A}_0(x)\left[\Omega_{0,0}\right]+\mathcal{A}_p(x)\left[\Omega_{p,0}\right] + \cdots + \mathcal{A}_{pN-p}(x)\left[\Omega_{pN-p,0}\right]+\mathcal{A}_{pN}(x)\left[\Omega_{pN,0}\right] = R(x) \widetilde{\widetilde{\mathcal{A}}}_0(x),
\end{equation*}
we observe that
\begin{equation*}
	\left[\Omega_{pN-p,0}\right] = \begin{bNiceMatrix}
		\Omega_{pN-p,0} & \Cdots & & & & \Omega_{pN-p,p-1} \\
		\Vdots & & & & & \Vdots \\
		\dfrac{\tau_0}{\tau_1} \\
		0 & \Ddots \\
		\Vdots&\Ddots \\
		0 & \Cdots & 0 & \dfrac{\tau_{r-1}}{\tau_{r}} & \Cdots & \Omega_{pN-1,p-1}
	\end{bNiceMatrix} 
\end{equation*}
However, since $\tau_0$ is not identically zero, the product $\mathcal{A}_{pN-p}(x)\left[\Omega_{pN-p,0}\right]$ does not satisfy the conditions \eqref{LeadingMatrixConditions} for the sub-leading matrix, and the initial conditions matrix will have a determinant equal to zero, see Corollary \ref{Corollary}.
	\end{proof}
	The preceding proof assumes that all zeros of the determinant are simple. With the introduction provided in Section \S \ref{S:ArbitraryMultiplicity}, the generalization to less stringent cases is analogous to the one carried out here.

\section*{Conclusions and Outlook}

In this paper, we unveil new explicit Christoffel formulas applicable to a wide spectrum of matrix polynomial perturbations, including a highly generalized leading coefficient with arbitrary rank. This advancement significantly extends our previous findings in \cite{AAGMM} for matrix orthogonal polynomials and in \cite{bfm} for multiple orthogonal polynomials. Additionally, we establish the equivalence between the existence of perturbed orthogonality and the non-vanishing of certain determinants, which we term as $\tau$  determinants.

Currently, our focus lies on investigating analogous results for Geronimus and Geronimus–Uvarov perturbations. Yet, establishing an existence result within this framework poses a challenge, primarily due to the absence, so far, of a divisibility result for Cauchy transforms. Additionally, drawing from \cite{Zhe}, we will delve into examining the behavior of the Stieltjes matrix transform under these Christoffel transformations within the realm of mixed multiple orthogonal polynomials. An enticing avenue for future exploration entails extending these general transformations to families of hypergeometric multiple orthogonal polynomials and understanding the transformation of the recurrence matrix. Finally, the KP-Toda  integrable flows requires further investigation within this Christoffel perturbation scenario.
	
	\section*{Acknowledgments}

	The authors acknowledges research project [PID2021- 122154NB-I00], \emph{Ortogonalidad y Aproximación con Aplicaciones en Machine Learning y Teoría de la Probabilidad}  funded  by
	 \href{https://doi.org/10.13039/501100011033}{MICIU/AEI/10.13039 /501100011033} and by "ERDF A Way of making Europe”.
	
	\section*{Declarations}
	
	\begin{enumerate}
		\item \textbf{Conflict of interest:} The authors declare no conflict of interest.
		\item \textbf{Ethical approval:} Not applicable.
		\item \textbf{Contributions:} All the authors have contribute equally.
	\end{enumerate}

\end{document}